\definecolor{Gray}{gray}{0.80}
\definecolor{LightGray}{gray}{0.90}
\newcommand{\cD}{\mathcal{D}}
\newcommand{\cH}{\mathcal{H}}
\newcommand{\cK}{\mathcal{K}}
\newcommand{\cM}{\mathcal{M}}
\newcommand{\cQ}{\mathcal{Q}}
\newcommand{\cR}{\mathcal{R}}
\newcommand{\cU}{\mathcal{U}}
\newcommand{\cV}{\mathcal{V}}
\newcommand{\cX}{\mathcal{X}}
\newcommand{\cY}{\mathcal{Y}}
\newcommand{\fL}{\mathfrak{L}}
\newcommand{\fP}{\mathfrak{P}}
\newcommand{\bC}{\mathbb{C}}
\newcommand{\bQ}{\mathbb{Q}}
\newcommand{\bR}{\mathbb{R}}
\newcommand{\bZ}{\mathbb{Z}}
\newcommand{\bONE}{\mathbbm{1}}
\newcommand{\Ba}{Ba}
\newcommand{\dd}{ \mathrm{d}}
\DeclareMathOperator*{\LIM}{LIM} 
\DeclareMathOperator*{\subLIM}{subLIM}
\DeclareMathOperator*{\superLIM}{superLIM}
\DeclareMathOperator*{\LIMSUP}{LIM \, SUP}
\DeclareMathOperator*{\LIMINF}{LIM \, INF}
\renewcommand{\epsilon}{\varepsilon}
\newcommand{\vn}[1]{\left| \! \left| #1\right| \! \right|}
\newcommand{\ip}[2]{\langle #1,#2\rangle}
\numberwithin{equation}{section}
\newtheorem{theorem}{Theorem}[section]
\newtheorem{lemma}[theorem]{Lemma}
\newtheorem{proposition}[theorem]{Proposition}
\theoremstyle{definition}
\newtheorem{definition}[theorem]{Definition}
\newtheorem{remark}[theorem]{Remark}
\newtheorem{assumption}[theorem]{Assumption}
\newtheorem{example}[theorem]{Example}
\newtheorem{condition}[theorem]{Condition}
\title{A general convergence result for viscosity solutions of Hamilton-Jacobi equations and non-linear semigroups}
\author{Richard C. Kraaij\thanks{Delft Institute of Applied Mathematics, Delft University of Technology, Van Mourik Broekmanweg 6, 2628 XE Delft, The Netherlands. \emph{E-mail address}: r.c.kraaij@tudelft.nl}}
\date{\today}
\begin{document}

\maketitle

\begin{abstract}
We extend the Barles-Perthame procedure \cite{BaPe88,BaPe90} (see also \cite{FK06}) of semi-relaxed limits of viscosity solutions of Hamilton-Jacobi equations of the type $f - \lambda H f = h$. 

The convergence result allows for equations on a `converging sequence of spaces' as well as Hamilton-equations written in terms of two equations in terms of operators $H_\dagger$ and $H_\dagger$ that serve as natural upper and lower bounds for the `true'  operator $H$.

In the process, we establish a strong relation between non-linear pseudo-resolvents and viscosity solutions of Hamilton-Jacobi equations. As a consequence we derive a convergence result for non-linear semigroups.

\noindent \emph{Keywords: Hamilton-Jacobi equation; \and viscosity solutions; \and Barles-Perthame method; non-linear semigroups}

\noindent \emph{MSC2010 classification: 49J45, 49L25, 47H20} 
\end{abstract}


\section{Introduction}

	In this paper, we will study the relation between three of the major objects in the field of semigroup theory: the semigroup, the resolvent and the generator. 
	
	\smallskip

	Consider the following problem posed by \cite{Ca1821}. Find all maps $f : \bR^+ \rightarrow \bC$ satisfying
	\begin{equation*}
	\begin{cases}
	\phi(t+s) = \phi(t)\phi(s) & \text{for all } s, t \geq 0, \\
	\phi(0) = 1.
	\end{cases}
	\end{equation*}
	Assuming that $\phi$ is continuous (the conclusion holds under much weaker assumptions), it can be shown that all maps of this type are of the form $\phi_a(t) := e^{ta} = \lim_{k \rightarrow \infty} \left(1 - \tfrac{t}{k}a \right)^{-k}$ with $a \in \bC$. 
	
	\smallskip
	
	The factor $a$, which can be found by $a = \partial_t \phi_a(t) |_{t=0}$, captures all essential information of the semigroup $\phi_a$. In addition, the dependence of $\phi_a$ on $a$ is robust under convergence: for a sequence of $a_n \in \bC$ with $a_n \rightarrow a$, it holds that $\phi_{a_n} \rightarrow \phi_{a}$ uniformly on compacts. 
	
	\smallskip
	
	Semigroup theory generalizes these three concepts to the level of semigroups on Banach and locally convex spaces. We will focus here on non-linear semigroups on the space of bounded measurable functions $M_b(X)$ on some space $X$. The three objects of interest are
	\begin{enumerate}[S(a)]
		\item A generator $H \subseteq M_b(X) \times M_b(X)$;
		\item A resolvent $R(\lambda) = (\bONE-\lambda H)^{-1}$, $\lambda > 0$;
		\item A semigroup $V(t) = \lim_{k \rightarrow \infty} R\left(\tfrac{t}{k}\right)^{k}$, $t \geq 0$.
	\end{enumerate}
In addition, one wants to establish relations between $H_n \rightarrow H$, $R_n(\lambda) \rightarrow R(\lambda)$ and $V_n(t) \rightarrow V(t)$.

\smallskip

In the context of linear semigroups on some Banach space $Y$, these results are all well known, cf. \cite{EN00,Yo78}. Two main results in this context are the Hille-Yosida generation theorem relating S(a), S(b) and S(c), whereas the Trotter-Kato-Kurtz approximation theorem establishes various implications between convergence of these three objects. The non-linear context is more complicated, cf. \cite{Mi92}. An analog of the Hille-Yosida theorem was proven by \cite{CL71} and the result by \cite{Ku73} is sufficiently general to do approximation theory in this context.

These results, however, assume that the equation $f - \lambda H f = h$ can be solved in the classical sense. While in the linear context this is often possible, in the non-linear context this is troublesome. \cite{CL83} introduced viscosity solutions that can replace classical solutions to achieve the same goals when restricting to operators taking values in the space of continuous functions.
The first stability result was observed already in this first paper \cite{CL83}, but it was Barles and Perthame \cite{BaPe88,BaPe90} who realized that if $H_n$ are operators converging to $H$, then semi-relaxed limits of viscosity solutions to Hamilton-Jacobi equations for $H_n$ yield viscosity sub- and supersolutions for the Hamilton-Jacobi equation for $H$. This method has been subsequently used in various papers, see \cite{CIL92} for a short discussion on the initial papers on this topic and see \cite[Section II.6]{FlSo06} for a somewhat more recent account in the setting of controlled Markov processes.

Finally, we want to mention \cite[Chapters 6 and 7]{FK06}, in which stability of solutions to Hamilton-Jacobi equations are studied in the context of large deviations for Markov processes, in which three important generalizations have been carried out with respect to the classical Barles-Perthame procedure:
\begin{enumerate}[(1)]
	\item Instead of working on a single space $X$, a sequence of spaces $X_n$ that are mapped into $X$ are considered. Conditions are given that imply the convergence of viscosity solutions of $f - \lambda H_n f = h_n$ on a space $X_n$ to a viscosity solution of $f - \lambda H f = h$ on $X$. For this result, Feng and Kurtz work with a generalized notion of buc (bounded and uniform on compacts) convergence that applies to functions on different spaces.
	\item Instead of working with a limiting operator $H$, \cite{FK06,Fe06} follow initial papers for Hamilton-Jacobi equations on infinite dimensional spaces, see \cite{CrLi94,Ta92,Ta94}, allowing for the possibility for a relaxed upper bound $H_\dagger$ and a relaxed lower bound $H_\ddagger$. Thus, in the limit a subsolution to $f - \lambda H_\dagger f = h$ and a supersolution to $f - \lambda H_\ddagger f = h$ is obtained. 
	\item The operators $H_\dagger$ and $H_\ddagger$ can take their images in the space of measurable functions $M(Y)$ on a space $Y$ instead of $X$, where $Y$ is a space containing more information than $X$, allowing for the use of the approximation theory in the setting of e.g. homogenization and multi-scale systems.
\end{enumerate}

Various applications of these extended methods are given in Chapters 11, 12 and 13 of \cite{FK06} and \cite{Fe06} and have recently been applied in large deviation theory in various settings \cite{DFL11,Po18,FeFoKu12,CoKr18,KrMa18,KrReVe18}.

\smallskip

The methods of \cite{FK06}, however, have two major drawbacks and lack one desirable property.
\begin{itemize}
		\item The extension of \cite{FK06} in their Chapter 7, which includes the generalizations (1), (2) and (3), is based on the property that the Hamiltonians $H_n$ are given in terms of an exponential tilt of an operator $A_n$ which is the generator of a Markov process. \cite{FK06} then approximate $A_n$ by its bounded Yosida approximant $A_n^\varepsilon = A_n(\bONE - \varepsilon A_n)^{-1}$. This leads to a continuous operator $H_n^\varepsilon$ that is easier to treat.
		A replacement of $H_n$ by $H_n^\varepsilon$ is not possible if $H_n$ is not derived from a linear operator $A_n$, therefore making it impossible to widely use the stability result in a general setting, excluding e.g. an application in the context of Gamma convergence \cite{DM93,Br02}.
		\item A second major drawback arises from the realization that in general, and in particular in infinite dimensions cf. \cite{Ta92,Ta94,CrLi94,Fe06,FeKa09,AmFe14,FeMiZi18}, it is advantageous to work with an upper and lower bound $H_\dagger$ and $H_\ddagger$ instead of a single Hamiltonian $H$. Therefore, instead of working with operators $H_n$ to obtain a limiting upper and lower bounds $H_\dagger, H_\ddagger$, one should work with pairs $H_{n,\dagger},H_{n,\ddagger}$ instead.
	\item  Finally, a lacking desirable property is that the result in \cite{FK06} is based on the assumption that $X_n$ are mapped into $X$. This leads to problems for example in the setting of hydrodynamic limits, see e.g. \cite{KL99}. In this context a Markov process is considered in which particles move around on a discrete lattice, e.g. $\bZ^d$. A typical state-space would be $X_n := \{0,1\}^{\bZ_d}$. After rescaling the lattice and speeding up time appropriately, the empirical measure associated to the particle locations converges to the solution of a diffusion equation, say in $X := L^1(\dd x)$. The convergence of measures to a profile in $L^1(\bR^d,\dd x)$, however, is considered with respect to the vague topology on $\cX := \cM(\bR^d)$.
	
	Thus, instead of considering spaces $X_n$ that get mapped into $X$ one wants to consider an auxiliary space $\cX$, in which both $X_n$ and $X$ get mapped. The convergence of elements is then considered as elements in $\cX$.
\end{itemize}

In this paper, we extend in Theorem \ref{theorem:convergence_of_resolvents} the Feng-Kurtz extension of the Barles-Perthame procedure to remedy these three issues. As a consequence, the Kurtz \cite{Ku73} convergence result gives us convergence of semigroups, see Theorem \ref{theorem:CL_extend}. 

In future work, the extended procedure will be used for a new proof of large deviations for Markov processes. In addition, in \cite{Kr19b} we give a framework to establish Gamma convergence of functionals on path-space.

\smallskip

As all the generalizations are quite technical, we start out in Section \ref{section:two_basic_results} with stating (without giving definitions of the required notions) a basic version of the convergence of viscosity solutions and the convergence of semigroups. This allows to quickly grasp the kind of results that are accessible. In this context, we will work with $X_n = \cX = X$ the notion of buc convergence and operators $H_n,H \subseteq C_b(X) \times C_b(X)$.

To set the stage for the more general results, we start in Section \ref{section:preliminaries} with some preliminaries that include a treatment of basic properties of our notion of convergence taking place on spaces $X_n,X,\cX$. All these results can be skipped on first reading assuming that $X_n = X = \cX$ and coincide with the ones of \cite{FK06} in the context that the $X_n$ are mapped into $X$ (in this case $X = \cX$).

We proceed in Section \ref{section:pseudo_resolvent_and_HJ} on a basic study of viscosity solutions for the Hamilton-Jacobi equation $f - \lambda H f = h$, as well as a study of pseudo-resolvents. To some extent these results are known in the literature, but as the results and proofs will be used as input for our main results later on, we collect these results for completeness. To summarize, we show that pseudo-resolvents can be used to construct viscosity solutions. On the other-hand, given well-posedness of the Hamilton-Jacobi equation, viscosity solutions can be used to construct a pseudo-resolvent. Finally, in this context, the pseudo-resolvent can be used to define a new Hamiltonian that satisfies the conditions for the semi-group generation result by \cite{CL71}.

In Sections \ref{section:convergence_of_operators_and_inverses} and \ref{section:convergence_of_semigroups} we proceed with our convergence statements, containing the two main Theorems \ref{theorem:convergence_of_resolvents} and \ref{theorem:CL_extend}.

Finally, we end in Section \ref{section:density_of_domain} with a short discussion on how to use the comparison principle for Hamilton-Jacobi equations to establish density of the domain of an operator constructed out of viscosity solutions. 

%
%

\section{Two basic convergence results} \label{section:two_basic_results}

To anticipate the general version of our two main results, we state in this section two simplified versions of these results. We will not give definitions of the required notions, as these will follow in more general context in Section \ref{section:preliminaries}. The notion of a pseudo-resolvent can be found as Definition \ref{defintion:pseudo_resolvent}.

We start with the convergence of viscosity solutions of Hamilton-Jacobi equations. A more general version is given as Theorem \ref{theorem:convergence_of_resolvents} below.

\begin{theorem} \label{theorem:main_theorem_basic_resolvent}
Suppose there are contractive pseudo-resolvents $R_n(\lambda) : C_b(X) \rightarrow C_b(X)$, $\lambda >0$ and operators $H, H_n \subseteq C_b(X) \times C_b(X)$, $n \geq 1$. Suppose in addition that
\begin{enumerate}[(a)]
	\item \label{item:convH_pseudoresolvents_solve_HJ_basic} For each $n \geq 1$, $\lambda > 0$ and $h \in C_b(X)$ the function $R_n(\lambda)h$ is a viscosity solution to $f - \lambda  H_n f= h$. 
	\item \label{item:convH_strict_equicont_resolvents_basic} We have local strict equi-continuity on bounded sets: for all compact sets $K \subseteq X$, $\delta > 0$ and $\lambda_0 > 0$, there is a compact set $\hat{K} = \hat{K}(K,\delta,\lambda_0)$ such that for all $n$ and $h_{1},h_{2} \in C_b(X)$ and $0 < \lambda \leq \lambda_0$ we have
	\begin{multline*}
	\sup_{y \in K} \left\{ R_n(\lambda)h_{1}(y) - R_n(\lambda)h_{2}(y) \right\} \\
	\leq \delta \sup_{x \in X} \left\{ h_{1}(x) - h_{2}(x) \right\} + \sup_{y \in \hat{K}} \left\{ h_{1}(y) - h_{2}(y) \right\}.
	\end{multline*}
	\item For each $(f,g) \in H$ there are $(f_n,g_n) \in H_n$ such that $buc-\lim f_n = f$ and $buc-\lim g_n = g$.
	\item There is a buc-dense set $D \subseteq C_b(X)$ such that the comparison principle holds for the Hamilton-Jacobi equation $f - \lambda H f = h$ for all $h \in D$ and $\lambda > 0$.
\end{enumerate}
Then there are operators $R(\lambda) : C_b(X) \rightarrow C_b(X)$ for $\lambda > 0$ that are locally strictly equi-continuous on bounded sets: for all compact sets $K \subseteq X$, $\delta > 0$ and $\lambda_0 > 0$, there is a compact set $\hat{K} = \hat{K}(K,\delta,\lambda_0)$ such that for all $h_{1},h_{2} \in C_b(X)$ and $0 < \lambda \leq \lambda_0$ we have
\begin{equation*}
\sup_{y \in K} \left\{ R(\lambda)h_{1}(y) - R(\lambda)h_{2}(y) \right\} \leq \delta \sup_{x \in X} \left\{ h_{1}(x) - h_{2}(x) \right\} + \sup_{y \in \hat{K}} \left\{ h_{1}(y) - h_{2}(y) \right\}.
\end{equation*}
For each $\lambda > 0$ and $h \in D$ the function $R(\lambda)h$ is the unique viscosity solution to $f - \lambda Hf = h$. In addition if $buc-\lim h_n = h$ then $buc-\lim R_n(\lambda)h_n = R(\lambda) h$.
\end{theorem}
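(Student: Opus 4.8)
The plan is to run the Barles--Perthame method of semi-relaxed (half-relaxed) limits: first construct $R(\lambda)h$ for $h\in D$ as the common value of an upper and a lower relaxed limit, and only afterwards extend to all of $C_b(X)$ using the contraction estimate~(b). To set up, fix $\lambda\in(0,\lambda_0]$ and $h\in D$, let $h_n\to h$ be any buc-convergent sequence (for instance $h_n=h$), and write $u_n:=R_n(\lambda)h_n$. Contractivity of the $R_n(\lambda)$ together with the uniform boundedness of $\{h_n\}$ gives a uniform bound $\sup_n\|u_n\|<\infty$, so that the semi-relaxed limits
\[
\bar u(x):=\limsup_{n\to\infty,\,y\to x}u_n(y),\qquad \underline u(x):=\liminf_{n\to\infty,\,y\to x}u_n(y)
\]
are finite, with $\bar u$ upper semicontinuous, $\underline u$ lower semicontinuous and $\underline u\le\bar u$ pointwise.

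The heart of the argument is to show that $\bar u$ is a viscosity subsolution and $\underline u$ a viscosity supersolution of $f-\lambda Hf=h$. For the subsolution property, let $(f,g)\in H$ and suppose $\bar u-f$ attains a maximum at some $x_0$. Using condition~(c), pick $(f_n,g_n)\in H_n$ with $f_n\to f$ and $g_n\to g$ in buc. The standard relaxed-limit lemma then produces, along a subsequence, near-maximizers $x_n$ of $u_n-f_n$ with $x_n\to x_0$ and $u_n(x_n)\to\bar u(x_0)$; feeding these into the subsolution inequality $u_n(x_n)-\lambda g_n(x_n)\le h_n(x_n)$ coming from~(a) and letting $n\to\infty$ yields $\bar u(x_0)-\lambda g(x_0)\le h(x_0)$. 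The supersolution property of $\underline u$ is symmetric. I expect this to be the main obstacle: producing the converging maximizers $x_n$ requires compactness/coercivity so that the maximum of $\bar u-f$ is localized, which in the non-compact setting is exactly what the viscosity-solution machinery of Section~\ref{section:pseudo_resolvent_and_HJ} is set up to supply; one must also first make the maximum strict, by a standard perturbation of $f$, before extracting $x_n$.

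With $\bar u$ a subsolution and $\underline u$ a supersolution for $h\in D$, the comparison principle~(d) gives $\bar u\le\underline u$, whence $\bar u=\underline u=:u$. Being simultaneously upper and lower semicontinuous, $u$ is continuous and bounded, and equality of the relaxed limits means $u_n(y_n)\to u(x)$ whenever $y_n\to x$; together with continuity of $u$ this upgrades to buc-convergence $u_n\to u$. Since comparison also forces $u$ to be the \emph{unique} viscosity solution of $f-\lambda Hf=h$, it is independent of the approximating sequence $h_n$, so we may set $R(\lambda)h:=u$ and have simultaneously proved that $R_n(\lambda)h_n\to R(\lambda)h$ for every $h_n\to h$ with $h\in D$.

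It remains to pass estimate~(b) to the limit and to extend $R(\lambda)$ to all of $C_b(X)$. Applying the buc-convergence just established to both inputs, estimate~(b) passes to the limit and yields the stated local strict equi-continuity for $R(\lambda)$ on $D$. The precise form of~(b) --- a freely adjustable $\delta\,\sup_X\{\cdot\}$ term plus a genuinely local term --- is tailored exactly to buc (rather than uniform) density: given arbitrary $h\in C_b(X)$ and $D\ni h^{(m)}\to h$ in buc, the term $\sup_{\hat K}\{h^{(m)}-h^{(m')}\}\to 0$ while $\delta\,\sup_X\{h^{(m)}-h^{(m')}\}$ is bounded by $\delta$ times a constant, so on each compact set $R(\lambda)h^{(m)}$ is uniformly Cauchy; the uniform bound then produces a buc-limit, which we define to be $R(\lambda)h$, and one checks that~(b) persists for all $h_1,h_2\in C_b(X)$. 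Finally, for general $h_n\to h$ the convergence $R_n(\lambda)h_n\to R(\lambda)h$ follows by a triangle-inequality argument: interpose an approximant $h^{(m)}\in D$, control $R_n(\lambda)h_n-R_n(\lambda)h^{(m)}$ and $R(\lambda)h-R(\lambda)h^{(m)}$ by~(b) and its limiting form, and invoke the already-established convergence for the $D$-element $h^{(m)}$.
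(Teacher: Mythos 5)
The outer architecture of your argument (semi-relaxed limits, comparison to identify $\overline{u}=\underline{u}$, upgrade to buc convergence, extension to all of $C_b(X)$ via estimate (b) and buc-density of $D$) matches the paper's. But the step you yourself flag as the main obstacle --- showing that $\overline{u}$ is a subsolution --- is a genuine gap, and the route you sketch for it is not the one that works here. You propose the classical Barles--Perthame mechanism: localize a maximum of $\overline{u}-f$ at some $x_0$, extract converging near-maximizers $x_n$ of $u_n-f_n$, and plug them into a pointwise inequality $u_n(x_n)-\lambda g_n(x_n)\le h_n(x_n)$. This fails on two counts. First, $X$ is only completely regular and $f\in C_b(X)$, so $\overline{u}-f$ need not attain its supremum and there is no coercivity to localize near-maximizers; the machinery of Section \ref{section:pseudo_resolvent_and_HJ} does not supply this --- on the contrary, the paper's sequence-based Definition of sub/supersolutions (equations \eqref{eqn:subsol_optimizing_sequence}--\eqref{eqn:subsol_sequence_outcome}) is designed precisely so that no maximum ever needs to be attained. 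Second, hypothesis (a) in that same sequence-based sense does not yield the pointwise inequality at arbitrary near-maximizers of $u_n-f_n$: it only asserts the existence of \emph{some} optimizing sequence along which the inequality holds asymptotically, so you cannot ``feed in'' the $x_n$ you constructed.

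The paper's actual mechanism is different and is the idea missing from your proposal. It uses the pseudo-resolvent identity (which your proof never invokes --- a warning sign, since it is a standing hypothesis): writing $f_n=R_n(\lambda)h_n=R_n(\varepsilon)\bigl(f_n-\varepsilon\tfrac{f_n-h_n}{\lambda}\bigr)$ for $0<\varepsilon<\lambda$, comparing with the classical bound $f_{n,0}\ge R_n(\varepsilon)(f_{n,0}-\varepsilon g_{n,0})$ for $(f_{n,0},g_{n,0})\in H_n$ (Lemma \ref{lemma:identification:viscosity_subsol}, itself a consequence of (a)), and applying the equi-continuity (b) of $R_n(\varepsilon)$, one obtains for every small $\varepsilon$ a global inequality
\begin{equation*}
\sup_x \bigl(\overline{F}(x)-f_0(x)\bigr)\;\le\;\sup_x \Bigl(\overline{F}(x)-\varepsilon\tfrac{\overline{F}(x)-h(x)}{\lambda}-\bigl(f_0(x)-\varepsilon g_0(x)\bigr)\Bigr)+2\delta ,
\end{equation*}
and then Lemma \ref{lemma:technical_to_establish_visc_sol} converts this family of sup-inequalities directly into the optimizing sequence required by the definition of subsolution. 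No maximum is localized and no pointwise inequality at near-maximizers is ever needed. (This is the content of Proposition \ref{proposition:existence_viscosity_sub_super_solutions}; the theorem as stated is then a special case of Theorem \ref{theorem:convergence_of_resolvents} with $X_n=\cX=X$.) Your remaining steps --- comparison, buc convergence, and the extension to $C_b(X)$ using the $\delta$-slack in (b) --- are sound and agree with Proposition \ref{proposition:extending_operators_via_LIM}, but without the resolvent-identity argument the core of the proof is not established.
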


The next result uses the convergence of the pseudo-resolvents to obtain the convergence of semigroups. The key ingredient in this context are the semigroup generation and convergence results of \cite{CL71,Ku73}. A more general version follows in Theorem \ref{theorem:CL_extend}.

\begin{theorem} \label{theorem:main_theorem_basic_semigroup}
	Suppose we are in the setting of Theorem \ref{theorem:main_theorem_basic_resolvent}. In addition suppose that there are operators $V_n(t) : C_b(X) \rightarrow C_b(X)$, $t \geq 0$ that form a semigroup. Suppose that
	\begin{enumerate}[(a)]
		\item there is a buc-dense subset $\cD_{n,0}$ such that for every $n \geq 1$, $f \in \cD_{n,0}$ and $x \in X$:
		\begin{equation*}
		\lim_{m \rightarrow \infty} R_n\left(\frac{t}{m}\right)^mf(x) = V_n(t)f(x).
		\end{equation*}
		\item \label{item:convH_strict_equicont_semigroups_basic}  We have local strict equi-continuity on bounded sets for the semigroups: for all compact sets $K \subseteq X$, $\delta > 0$ and $t_0 > 0$, there is a compact set $\hat{K} = \hat{K}(K,\delta,\lambda_0)$ such that for all $n$ and $h_{1},h_{2} \in C_b(X)$ and $0 \leq t \leq t_0$ that
		\begin{multline*}
		\sup_{y \in K} \left\{ V_n(t)h_{1}(y) - V_n(t)h_{2}(y) \right\} \\
		\leq \delta \sup_{x \in X} \left\{ h_{1}(x) - h_{2}(x) \right\} + \sup_{y \in \hat{K}} \left\{ h_{1}(y) - h_{2}(y) \right\}.
		\end{multline*}
		\item We have $buc-\lim_{\lambda \downarrow 0} R(\lambda)h = h$ and for each $n$ we have $buc-\lim_{\lambda \downarrow 0} R_n(\lambda)h = h$.
	\end{enumerate}
Then there are operators $V(t) : C_b(X) \rightarrow C_b(X)$ for $t \geq 0$ that are locally strictly equi-continuous on bounded sets: for all compact sets $K \subseteq X$, $\delta > 0$ and $t_0 > 0$, there is a compact set $\hat{K} = \hat{K}(K,\delta,t_0)$ such that for all $h_{1},h_{2} \in C_b(X)$ and $0 \leq t \leq t_0$ we have
\begin{equation*}
\sup_{y \in K} \left\{ V(t)h_{1}(y) - V(t)h_{2}(y) \right\} \leq \delta \sup_{x \in X} \left\{ h_{1}(x) - h_{2}(x) \right\} + \sup_{y \in \hat{K}} \left\{ h_{1}(y) - h_{2}(y) \right\}.
\end{equation*}
In addition there are subsets $\cD_n,\cD \subseteq C_b(X)$ that are buc-dense such that if $f_n \in \cD_n$, $f \in \cD$ such that $buc-\lim f_n = f$ and $t_n \rightarrow t$, then $buc-\lim V_n(t_n)f_n = V(t)f$.
\end{theorem}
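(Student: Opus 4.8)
The plan is to combine the Crandall--Liggett--Kurtz machinery with the resolvent convergence already established in Theorem \ref{theorem:main_theorem_basic_resolvent}. The guiding principle is that the approximants $R_n(t/m)^m$ converge in two different directions---in $m$ (to the semigroup $V_n(t)$, by hypothesis (a)) and in $n$ (via the resolvent convergence)---and the task is to interchange these limits to produce a limiting semigroup $V(t)$ with the stated equi-continuity and convergence properties.

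\emph{Construction of $V(t)$.} First I would note that the convergence $buc\text{-}\lim R_n(\lambda)h = R(\lambda)h$ from Theorem \ref{theorem:main_theorem_basic_resolvent}, combined with $buc\text{-}\lim_{\lambda \downarrow 0} R(\lambda)h = h$ in hypothesis (c), shows that $R(\lambda)$ is a pseudo-resolvent satisfying the infinitesimal-contraction estimate needed to apply the Crandall--Liggett generation theorem \cite{CL71}. Concretely, one defines a limiting generator $\hat H$ through the resolvent $R(\lambda)$ (as indicated in the discussion of Section \ref{section:pseudo_resolvent_and_HJ}), and the Crandall--Liggett exponential formula yields a contraction semigroup $V(t)$ via $V(t)f := \lim_{m \to \infty} R(t/m)^m f$ on a suitable dense domain, extended by equi-continuity. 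The local strict equi-continuity of $V(t)$ on bounded sets is inherited from that of $R(\lambda)$: the estimate in Theorem \ref{theorem:main_theorem_basic_resolvent} passes through the $m$-fold composition $R(t/m)^m$ by iterating the inequality---tracking how the compact sets $\hat K$ compose---and then survives the limit $m \to \infty$, exactly paralleling how one derives such bounds for the approximants $V_n(t)$.

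\emph{Convergence $V_n(t_n)f_n \to V(t)f$.} This is the heart of the matter and where I expect the main obstacle to lie. The strategy is a three-$\varepsilon$ / diagonal argument. For fixed large $m$, hypothesis (a) gives $V_n(t_n)f_n \approx R_n(t_n/m)^m f_n$; the resolvent convergence of Theorem \ref{theorem:main_theorem_basic_resolvent}, applied $m$ times (each application is legitimate because the iterates stay in $C_b(X)$ and $buc\text{-}\lim f_n = f$), gives $R_n(t_n/m)^m f_n \to R(t/m)^m f$ in $buc$ as $n \to \infty$; and the generation formula gives $R(t/m)^m f \to V(t)f$ as $m \to \infty$. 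The delicate point is controlling the error in hypothesis (a) \emph{uniformly in $n$}, so that the two limits interchange: here the equi-continuity hypothesis (b) for the semigroups $V_n$ is precisely what supplies the needed uniform control, preventing the approximation error from escaping to infinity along the sequence $n$. One also has to accommodate $t_n \to t$ rather than a fixed $t$, which is handled by the joint (space and time) equi-continuity, estimating $\vn{V_n(t_n) - V_n(t)}$ and $\vn{R(t/m)^m - R(t_n/m)^m}$ on compacts.

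\emph{Identifying the dense sets and closing the argument.} The sets $\cD_n, \cD$ on which convergence holds should be taken as the domains on which the exponential formula converges---essentially the ranges $R_n(\lambda)C_b(X)$ and $R(\lambda)C_b(X)$ together with the given $\cD_{n,0}$---which are $buc$-dense by hypothesis (a) of Theorem \ref{theorem:main_theorem_basic_resolvent} and the density hypotheses here. Once the convergence is established on these cores, the equi-continuity estimate for $V(t)$ and $V_n(t)$ extends it to the full $buc$-closure, giving the stated conclusion. The principal technical burden, as above, is the uniform-in-$n$ control of the Crandall--Liggett approximation rate, and I would isolate this as the key lemma, most likely importing it directly from the quantitative form of Kurtz's approximation theorem \cite{Ku73} rather than reproving it.
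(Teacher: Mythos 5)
Your outline follows essentially the same route as the paper: the limiting semigroup is generated by Crandall--Liggett from the operator $\widehat{H}$ built out of the limiting pseudo-resolvent $R(\lambda)$ (Proposition \ref{proposition:Hhat_properties}), and the convergence $V_n(t_n)f_n \to V(t)f$ is ultimately delegated to Kurtz's approximation theorem. The one organizational difference is that the paper does not run your three-$\varepsilon$ diagonal argument by hand; it embeds everything into the product Banach space $\fL$ of Proposition \ref{proposition:study_LIM}, encodes $\LIM$ as the closed contractive graph $\fP$, forms the diagonal operator $\cH$ from the $\widehat{H}_n$ and $\widehat{H}$, and invokes \cite[Theorem 3.2]{Ku73} as a black box --- which is cleaner but mathematically the same interchange of limits you describe. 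The point I would push back on is your attribution of the uniform-in-$n$ control of the exponential-formula error to hypothesis (b): equi-continuity of the $V_n$ compares $V_n(t)h_1$ with $V_n(t)h_2$ and says nothing about the rate at which $R_n(t/m)^m f_n$ approaches $V_n(t)f_n$. That uniformity comes instead from dissipativity of $\widehat{H}_n$ (i.e.\ contractivity of $R_n$) together with the bound $\sup_n \vn{(R_n(\lambda)h_n - h_n)/\lambda} < \infty$ along $\LIM$-convergent sequences $h_n$, which is exactly the quantitative input to the Crandall--Liggett/Kurtz estimate; your fallback of importing this from \cite{Ku73} rescues the step, but the mechanism is not (b). Hypothesis (b) is used elsewhere: to identify the given semigroups $V_n(t)$ with the Crandall--Liggett semigroups of $\widehat{H}_n$ beyond the core (together with (a)), and, via Proposition \ref{proposition:extending_operators_via_LIM}, to extend both the convergence and the local strict equi-continuity of $V(t)$ from $\cD(\widehat{H})$ to the quasi-closures $\cD_n, \cD$, whose buc-density is what hypothesis (c) supplies.
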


Both results will foll as special cases of much more general result that we will prove in the following sections.

\section{Preliminaries} \label{section:preliminaries}

\subsection{Basic definitions}

All spaces in this paper are assumed to be completely regular spaces. Let $X$ be a space then we denote by $C_b(X)$ the set of continuous and bounded functions into $\bR$. We denote by $\Ba(X)$ the space of Baire measurable sets (the $\sigma$-algebra generated by $C_b(X)$.) By $M(X)$, we denote by set of Baire measurable functions from $X$ into $\overline{\bR} := [-\infty,\infty]$. $M_b(X)$ denotes the set of bounded Baire measurable functions.  Denote
\begin{align*}
USC_u(X) & := \left\{f \in M(X) \, \middle| \,  f \, \text{upper semi-continuous}, \sup_x f(x) < \infty \right\}, \\
LSC_l(X) & := \left\{f \in M(X) \, \middle| \, f \, \text{lower semi-continuous}, \inf_x f(x) > \infty \right\}.
\end{align*}
For $g \in M(X)$ denote by $g^*,g_* \in M(X)$ the upper and lower semi-continuous regularizations of $g$.

\subsection{Viscosity solutions}

Let $X$ and $Y$ be two spaces. Let $\gamma : Y \rightarrow X$ be continuous and surjective.

We consider operators $A \subseteq M(X) \times C(Y)$. If $A$ is single valued and $(f,g) \in A$, we write $Af := g$. We denote $\cD(A)$ for the domain of $A$ and $\cR(A)$ for the range of $A$.

\begin{definition}
	Let $A_\dagger \subseteq LSC_l(X) \times USC_u(Y)$ and $A_\ddagger \subseteq USC_u(X) \times LSC_l(Y)$. Fix $h_1,h_2 \in M(X)$. Consider the equations
	\begin{align} 
	f -  A_\dagger f & = h_1, \label{eqn:differential_equation_Adagger} \\
	f - A_\ddagger f & = h_2. \label{eqn:differential_equation_Addagger}
	\end{align}
	\begin{description}
	\item[Classical solutions] We say that $u$ is a \textit{classical subsolution} of equation \eqref{eqn:differential_equation_Adagger} if there is a $g$ such that $(u,g) \in A_\dagger$ and $u - g \leq h$. We say that $v$ is a \textit{classical supersolution} of equation \eqref{eqn:differential_equation_Addagger} if there is $g$ such that $(v,g) \in A_\ddagger$ and $v - g \geq h$. We say that $u$ is a \textit{classical solution} if it is both a sub- and a supersolution.
	\item[Viscosity subsolutions] We say that $u : X \rightarrow \bR$ is a \textit{subsolution} of equation \eqref{eqn:differential_equation_Adagger} if $u \in USC_u(X)$ and if, for all $(f,g) \in A_\dagger$ such that $\sup_x u(x) - f(x) < \infty$ there is a sequence $y_n \in Y$ such that
	\begin{equation} \label{eqn:subsol_optimizing_sequence}
	\lim_{n \rightarrow \infty} u(\gamma(y_n)) - f(\gamma(y_n))  = \sup_x u(x) - f(x),
	\end{equation}
	and
	\begin{equation} \label{eqn:subsol_sequence_outcome}
	\limsup_{n \rightarrow \infty} u(\gamma(y_n)) - g(y_n) - h_1(\gamma(y_n)) \leq 0.
	\end{equation}
	\item[Viscosity supersolution] 	We say that $v : X \rightarrow \bR$ is a \textit{supersolution} of equation \eqref{eqn:differential_equation_Addagger} if $v \in LSC_l(X)$ and if, for all $(f,g) \in A_\ddagger$ such that $\inf_x v(x) - f(x) > - \infty$ there is a sequence $y_n \in Y$ such that
	\begin{equation} \label{eqn:supersol_optimizing_sequence}
	\lim_{n \rightarrow \infty} v(\gamma(y_n)) - f(\gamma(y_n))  = \inf_x v(x) - f(x),
	\end{equation}
	and
	\begin{equation} \label{eqn:supersol_sequence_outcome}
	\liminf_{n \rightarrow \infty} v(\gamma(y_n)) - g(y_n) - h_2(\gamma(y_n)) \geq 0.
	\end{equation}
	\item[Viscosity solution] We say that $u$ is a \textit{solution} of the pair of equations \eqref{eqn:differential_equation_Adagger} and \eqref{eqn:differential_equation_Addagger} if it is both a subsolution for $A_\dagger$ and a supersolution for $A_\ddagger$.
	\item[Comparison principle] We say that  \eqref{eqn:differential_equation_Adagger} and \eqref{eqn:differential_equation_Addagger} satisfy the \textit{comparison principle} if for every subsolution $u$ to \eqref{eqn:differential_equation_Adagger} and supersolution $v$ to \eqref{eqn:differential_equation_Addagger}, we have
	\begin{equation} \label{eqn:comparison_estimate}
	\sup_x u(x) - v(x) \leq \sup_x h_1(x) - h_2(x).
	\end{equation}
	If $H = A_\dagger = A_\ddagger$, we will say that the comparison principle holds for $f - \lambda Af = h$, if for any subsolution $u$ for $f - \lambda Af = h_1$ and supersolution $v$ of $f - \lambda Af = h_2$ the estimate in \eqref{eqn:comparison_estimate} holds. 
	\end{description}
\end{definition}

Often, as in Section \ref{section:pseudo_resolvent_and_HJ} below, $Y = X$ and $\gamma(x) = x$ simplifying the definitions above.

\subsection{Operators}

For an operator $A \subseteq M(X) \times M(Y)$ and $c \geq 0$ we write $cA \subseteq M(X) \times M(Y)$ for the operator
\begin{equation*}
c \cdot A := \left\{ (f,c\cdot g) \, \middle| \, (f,g) \in A \right\}.
\end{equation*}
Here we write $c \cdot g$ for the function
\begin{equation*}
c \cdot g(x) := \begin{cases}
cg(x) & \text{if } g(x) \in \bR, \\
\infty & \text{if } g(x) = \infty, \\
- \infty & \text{if } g(x) = - \infty.
\end{cases}
\end{equation*}

The next set of properties is mainly relevant in the setting that $Y = X$.

\begin{definition}
	
	\begin{description}
		\item[Contractivity] We say that $T \subseteq M(X) \times M(X)$ is \textit{contractive} if for all $f_1,f_2 \in \cD(T)$:
		\begin{align*}
		\sup_x Tf_1(x) - Tf_2(x) \leq \sup_x f_1(x) - f_2(x), \\
		\inf_x Tf_1(x) - Tf_2(x) \geq \inf_x f_1(x) - f_2(x).
		\end{align*}
		If in addition $T0 = 0$, contractivity implies that $\sup_x Tf(x) \leq \sup_x f(x)$ and $\inf_x Tf(x) \geq \inf_x f(x)$.
		\item[Dissipativity] We say $A \subseteq M(X) \times M(X)$ is \textit{dissipative} if for all $(f_1,g_1),(f_2,g_2) \in A$ and $\lambda > 0$ we have
		\begin{equation*}
		\vn{f_1 - \lambda g_1 - (f_2 - \lambda g_2)} \geq \vn{f_1 - f_2};
		\end{equation*}
		\item[The range condition] We say $A \subseteq M(X) \times M(X)$ satisfies \textit{the range condition} if for all $\lambda > 0$ we have: the uniform closure of $\cD(A)$ is a subset of $\cR(\bONE - \lambda A)$.
	\end{description}

\end{definition}

The following theorem was proven for accretive operators but can be easily translated into dissipative operators by changing $A$ by $-A$.

\begin{theorem}[Crandall-Liggett \cite{CL71}] \label{theorem:CL}
	Let $A$ be an operator on a Banach space $E$. Suppose that 
	\begin{enumerate}[(a)]
		\item $A$ is dissipative,
		\item $A$ satisfies the range condition.
	\end{enumerate}
	Denote by $R(\lambda,A) = \left(\bONE- \lambda A\right)^{-1}$. Then there is a strongly continuous (for the norm) contraction semigroup $S(t)$ defined on the uniform closure of $\cD(A)$ and for all $t \geq 0$ and $f$ in the uniform closure of $\cD(A)$
	\begin{equation*}
	\lim_n \vn{R\left(\tfrac{t}{n},A\right)^n f - S(t) f} = 0.
	\end{equation*}
\end{theorem}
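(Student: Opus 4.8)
The plan is to run the classical Crandall--Liggett argument, whose architecture is forced by the two hypotheses: dissipativity makes the resolvents contractive, and the range condition makes them globally defined on $\overline{\cD(A)}$. Write $J_\lambda := R(\lambda,A) = (\bONE - \lambda A)^{-1}$. First I would record the elementary consequences of the hypotheses. If $(x_i,g_i) \in A$ and $y_i = x_i - \lambda g_i$, dissipativity reads $\vn{y_1 - y_2} \geq \vn{x_1 - x_2}$, so $\bONE - \lambda A$ is injective with a single-valued $1$-Lipschitz inverse $J_\lambda$ on $\cR(\bONE - \lambda A)$; by the range condition $\overline{\cD(A)} \subseteq \cR(\bONE-\lambda A)$, hence every $J_\lambda$ and every iterate $J_\lambda^n$ is a contraction defined on $\overline{\cD(A)}$. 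For $(f,g) \in A$ one has $J_\lambda(f - \lambda g) = f$, which together with the contraction property gives the boundary estimate $\vn{J_\lambda^n f - f} \leq n\lambda \vn{g}$. This part only unwinds the definitions.

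The analytic core is to show that $J_{t/n}^n f$ is Cauchy in $n$ for $f \in \cD(A)$. The key algebraic tool is the resolvent identity
\[ J_\lambda z = J_\mu\!\left( \tfrac{\mu}{\lambda} z + \left(1 - \tfrac{\mu}{\lambda}\right) J_\lambda z \right), \qquad 0 < \mu \leq \lambda, \]
verified directly from $J_\lambda z - \lambda A J_\lambda z = z$. Setting $a_{i,j} := \vn{J_\lambda^i f - J_\mu^j f}$, applying this identity to $J_\lambda^i f$ and using contractivity of $J_\mu$ yields the fundamental recursion
\[ a_{i,j} \leq \tfrac{\mu}{\lambda}\, a_{i-1,j-1} + \left(1 - \tfrac{\mu}{\lambda}\right) a_{i,j-1}, \]
with boundary data $a_{i,0} \leq i\lambda\vn{g}$ and $a_{0,j} \leq j\mu\vn{g}$. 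Iterating the recursion, treating the weights $\tfrac{\mu}{\lambda}$ and $1 - \tfrac{\mu}{\lambda}$ as a random walk and estimating the resulting binomial sums by Cauchy--Schwarz, produces a bound of the form
\[ a_{n,m} \leq \vn{g}\left[ (n\lambda - m\mu)^2 + n\lambda^2 + m\mu^2 \right]^{1/2}. \]
With $\lambda = t/n$ and $\mu = t/m$ the first term vanishes and the right-hand side is at most $\vn{g}\, t\, (n^{-1} + m^{-1})^{1/2} \to 0$. \emph{This is the step I expect to be the main obstacle}: the recursion must be solved sharply enough to extract the $\sqrt{1/n + 1/m}$ decay, and the combinatorial bookkeeping of the binomial weights is where the real work concentrates.

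Granting the Cauchy property, I would define $S(t)f := \lim_n J_{t/n}^n f$ for $f \in \cD(A)$. Each $J_{t/n}^n$ is a contraction, so $S(t)$ is $1$-Lipschitz on $\cD(A)$ and extends uniquely to a contraction on $\overline{\cD(A)}$; the uniform Cauchy estimate shows the extension is still the limit $\lim_n J_{t/n}^n f$ for every $f$ in the closure, which establishes the convergence statement of the theorem. The same boundary estimate gives, for $f \in \cD(A)$, the Lipschitz bound $\vn{S(t)f - S(s)f} \leq \vn{g}\,|t-s|$, so $t \mapsto S(t)f$ is continuous on $\cD(A)$; an $\varepsilon/3$ argument combined with the contraction property extends strong continuity to all of $\overline{\cD(A)}$, and $S(0) = \bONE$.

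Finally, for the semigroup property I would use a common-step-size comparison. Fix $f \in \cD(A)$, $s,t \geq 0$, and a mesh $h$ with $n \approx t/h$, $m \approx s/h$ integers; since $J_h^{n+m} = J_h^n J_h^m$, contractivity gives
\[ \vn{J_h^n(J_h^m f) - S(t)(S(s)f)} \leq \vn{J_h^m f - S(s)f} + \vn{J_h^n(S(s)f) - S(t)(S(s)f)}, \]
where the first term tends to $0$ since $J_h^m f \to S(s)f$, and the second tends to $0$ because $J_h^n \to S(t)$ pointwise at $S(s)f \in \overline{\cD(A)}$. As the left-hand inner quantity equals $\lim J_h^{n+m} f = S(t+s)f$ in the limit, we obtain $S(t+s) = S(t)S(s)$, with the endpoint remainders controlled by the Lipschitz bound above. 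Passing from $\cD(A)$ to $\overline{\cD(A)}$ by density and contraction completes the argument.
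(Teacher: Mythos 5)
The paper does not prove this statement: it is quoted verbatim as the Crandall--Liggett generation theorem and attributed to \cite{CL71}, so there is no internal proof to compare against. Your sketch is a faithful reconstruction of the original argument --- resolvent contractivity from dissipativity, global definition of $J_\lambda$ on $\overline{\cD(A)}$ from the range condition, the resolvent identity yielding the two-parameter recursion for $a_{i,j}$, the binomial/Cauchy--Schwarz bound $a_{n,m}\le \vn{g}\bigl[(n\lambda-m\mu)^2+n\lambda^2+m\mu^2\bigr]^{1/2}$, and the density and common-mesh arguments for the extension, strong continuity and the semigroup law --- and all the stated steps are correct, including the handling of multivaluedness and the membership of $\tfrac{\mu}{\lambda}z+(1-\tfrac{\mu}{\lambda})J_\lambda z$ in $\cR(\bONE-\mu A)$. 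The only part not actually carried out is the one you flag yourself, the combinatorial solution of the recursion; as a blind reconstruction of a cited classical result this is entirely adequate.
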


%
%

\subsection{Operators and the strict and buc topology}

In addition to normed spaces, we consider bounded and uniform convergence on compacts (buc-convergence). This notion of convergence for functions on $C_b(X)$ is more natural from an applications point of view. This is due to the fact that it is the restriction of the locally convex strict topology restricted to sequences, see e.g. \cite{Bu58,Se72}. Indeed, it is the strict topology for which most well known results generalize (under appropriate conditions on the topology, e.g. $X$ Polish): Stone-Weierstrass, Arzelà-Ascoli and the Riesz representation theorem. We define both notions.

\begin{definition}[buc convergence] \label{definition:buc_convergence}
	Let $f_n \in C_b(X)$ and $f \in C_b(X)$. We say that $f_n$ converges \textit{bounded and uniformly on compacts} (buc) if $\sup_n \vn{f_n} < \infty$ and if for all compact $K \subseteq X$:
	\begin{equation} \label{eqn:buc_uniform}
	\lim_n \sup_{x \in K} \left|f_n(x) - f(x)\right| = 0.
	\end{equation}
\end{definition}

Note \eqref{eqn:buc_uniform} can be replaced by $f_n(x_n) \rightarrow f(x)$ for all sequences $x_n \in K$ that converge to $x \in K$.

\begin{definition}
	The \textit{(sub) strict topology} $\beta$ on the space $C_b(X)$ for a completely regular space $X$ is generated by the collection of semi-norms
	\begin{equation*}
	p(f) := \sup_n a_n \sup_{x \in K_n} |f(x)|
	\end{equation*}
	where $K_n$ are compact sets in $X$ and where $a_n \geq 0$ and $a_n \rightarrow 0$.
\end{definition}

	\begin{remark} \label{remark:strict_implies_buc}
		The (sub)strict topology is the finest locally convex topology that coincides with the compact open topology on bounded sets. Thus, a sequence converges strictly if and only if it converges buc.
		
		In the literature on locally convex spaces, the strict topology is usually referred to as the substrict topology, but on Polish spaces, among others, these topologies coincide, see \cite{Se72}.
	\end{remark}

	\begin{definition}
		\begin{enumerate}[(a)]
			\item Denote $B_r := \left\{f \in C_b(X) \, \middle| \, \vn{f} \leq r \right\}$. We say that a set $D$ is \textit{quasi-closed} if for all $r \geq 0$ the set $D \cap B_r$ is closed for the strict topology (or equivalently for the compact open or buc topologies).
			\item We say that $\widehat{D}$ is the \textit{quasi-closure} of $D$ if $\widehat{D} = \bigcup_{r > 0} \widehat{D}_r$, where $\widehat{D}_r$ is the strict closure of $D \cap B_r$.
			\item We say that $D_1$ is \textit{quasi-dense} in $D_2$ if $D_1 \cap B_r$ is strictly dense in $D_2 \cap B_r$ for all $r \geq 0$.
		\end{enumerate}
	\end{definition}

	Next, we consider operators with respect to a hierarchy of statements regarding continuity involving the strict topology. The proof can be found in Appendix \ref{appendix:proof_of_topology_implications}.

\begin{proposition} \label{proposition:buc_cont_equiv_to_double_bound}
	Let $T : C_b(X) \rightarrow C_b(X)$. Consider
	\begin{enumerate}[(a)]
		\item $T$ is strictly continuous.
		\item \label{item:continuity_strict_intermediate} For all $\delta > 0$, $r > 0$, and compact sets $K$ there are $C_0(r)$, $C_1(\delta,r)$ and a compact set $\hat{K}(K,\delta,r)$ such that 
		\begin{equation*}
		\sup_{x \in K} |Tf(x) - Tg(x)| \leq \delta C_0(r) + C_1(\delta, r) \sup_{x \in \hat{K}(K,\delta,r)} |f(x) - g(x)|
		\end{equation*}
		for all $f,g \in C_b(X)$ such that $\vn{f} \vee \vn{g} \leq r$.
		\item $T$ is strictly continuous on bounded sets.
	\end{enumerate}
Then (a) implies (b) and (b) implies (c).
\end{proposition}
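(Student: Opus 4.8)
I would prove the two implications separately, since they are of quite different character.

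The implication (b)$\Rightarrow$(c) is the routine one, and I would read it straight off the estimate. Let $f_n,f\in B_r$ with $f_n\to f$ strictly; by Remark~\ref{remark:strict_implies_buc} this means $\sup_n\vn{f_n}\le r$ and $f_n\to f$ uniformly on compacts. Fix a compact $K$ and $\epsilon>0$. Since the constant $C_0(r)$ in (b) does not depend on $\delta$, I would first choose $\delta$ with $\delta C_0(r)<\epsilon/2$ and only then read off the compact set $\hat K=\hat K(K,\delta,r)$ and the constant $C_1=C_1(\delta,r)$. Applying (b) to the pair $(f_n,f)$ gives
\begin{equation*}
\sup_{x\in K}|Tf_n(x)-Tf(x)|\le \frac{\epsilon}{2}+C_1\sup_{x\in\hat K}|f_n(x)-f(x)|,
\end{equation*}
whose second term vanishes as $n\to\infty$ because $\hat K$ is compact. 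As $K$ is arbitrary, $Tf_n\to Tf$ uniformly on compacts, which (together with norm-boundedness of the image, discussed below) is exactly strict continuity of $T$ on $B_r$.

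For (a)$\Rightarrow$(b) the real work is to upgrade qualitative continuity into the ball-uniform quantitative estimate. The mechanism I would exploit is that on a bounded set the defining property $a_n\to 0$ of the strict seminorms converts a strict neighbourhood into an ``agreement on a compact set'' condition. Fix $\delta,r,K$ and note that $q_K(h):=\sup_{x\in K}|h(x)|$ is itself a strict seminorm. Continuity of $T$ at a point $g$ produces a strict neighbourhood $\{f:p_g(f-g)<\eta_g\}$ on which $q_K(Tf-Tg)<\delta$; writing $p_g(h)=\sup_m a_m^{(g)}\sup_{K_m^{(g)}}|h|$ and using $a_m^{(g)}\to 0$ together with $\vn{f-g}\le 2r$ on $B_r$, only finitely many indices contribute, so this neighbourhood contains a set $\{f\in B_r:\sup_{\hat K_g}|f-g|<\rho_g\}$ for a compact $\hat K_g$ and some $\rho_g>0$. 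This is a local, $g$-dependent form of the desired estimate.

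The main obstacle is then to globalise these local estimates to a single compact $\hat K$ and single constants valid throughout $B_r$: the ball $B_r$ is \emph{not} compact for the strict topology, so no finite subcover of the neighbourhoods above is available. I would attack this by contradiction. If (b) fails for some $\delta_0,r_0,K_0$, then for each pair $\alpha=(L,m)$, with $L\subseteq X$ compact and $m\in\bN$, I can select $f_\alpha,g_\alpha\in B_{r_0}$ violating the estimate with $C_0=1$, $C_1=m$ and $\hat K=L$, so that
\begin{equation*}
m\sup_{x\in L}|f_\alpha(x)-g_\alpha(x)|<\sup_{x\in K_0}|Tf_\alpha(x)-Tg_\alpha(x)|-\delta_0 .
\end{equation*}
Directing the index set by inclusion of $L$ and the order of $m$, the differences $f_\alpha-g_\alpha$ are norm-bounded by $2r_0$, and I would like to conclude that they tend to $0$ uniformly on every compact set, hence strictly, while $q_{K_0}(Tf_\alpha-Tg_\alpha)$ stays $\ge\delta_0$, contradicting continuity.

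The crux — the step I expect to be genuinely delicate, and the place where strict continuity rather than mere compact-open continuity must be used — is twofold, and both pieces stem from the non-compactness of $B_r$. First, to make $\sup_L|f_\alpha-g_\alpha|\to 0$ along the net one needs the right-hand side bounded, i.e.\ $\sup_{f\in B_{r_0}}q_{K_0}(Tf)<\infty$; this same norm-boundedness of $T(B_r)$ is what turns the compact-open convergence in (b)$\Rightarrow$(c) into true strict convergence. Second, even granting this, turning ``the differences vanish strictly'' into a contradiction with continuity requires anchoring the two nets at a common strict limit, which is not automatic because bounded sets admit no convergent subnets. I would therefore isolate the boundedness of $T$ on bounded sets as a preliminary lemma and attempt to derive it from the neighbourhood structure of $\beta$ on $B_r$ (and, if needed, a completeness/Baire argument on $(C_b(X),\beta)$), and I would replace the crude net by one in which $g_\alpha$ is forced to converge; I expect this globalisation to be where essentially all the difficulty of the proposition resides, the two stated implications following routinely once it is in place.
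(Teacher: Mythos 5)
Your proof of (b)$\Rightarrow$(c) is correct and essentially coincides with the paper's: the norm-boundedness of $Tf_\alpha-Tf$ that you defer is obtained for free from (b) itself by taking $\delta=1$ and noting that the resulting bound $\delta C_0(r)+2rC_1(1,r)$ is independent of the compact set $K$, so no separate lemma (let alone a Baire argument) is needed there.

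For (a)$\Rightarrow$(b), however, your proposal does not close, and you say so yourself: the contradiction scheme stalls at the globalisation step, with both the boundedness of $T(B_r)$ and the ``anchoring'' of the net $(f_\alpha,g_\alpha)$ left unresolved. The paper's proof bypasses this entirely by starting from a uniform two-point seminorm estimate: strict continuity of $T$ is taken to furnish, for the continuous seminorm $p(h)=\sup_{x\in K}|h(x)|$, a strict seminorm $q(h)=\sup_n a_n\sup_{x\in K_n}|h(x)|$ with $a_n\to 0$ such that
\begin{equation*}
p(Tf-Tg)\;\leq\;q(f-g)\qquad\text{for all }f,g\in C_b(X).
\end{equation*}
Granting this, (b) is immediate: split the supremum over $n$ into the finitely many indices with $a_n>\delta$, whose sets $K_n$ are collected into the compact set $\hat{K}(K,\delta,r)$, and the tail with $a_n\leq\delta$, which contributes at most $\delta\vn{f-g}\leq 2r\delta$; this yields $C_0(r)=2r$ and $C_1(\delta,r)=\max_{n:a_n>\delta}a_n$. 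The missing idea in your write-up is therefore exactly this domination of $p(Tf-Tg)$ by a single strict seminorm of $f-g$, uniformly over pairs, followed by the decomposition of that seminorm according to the size of the coefficients $a_n$. Your instinct that such a global two-point estimate is not a formal consequence of pointwise continuity of a nonlinear map is sound --- for linear $T$ it is equivalent to continuity, and the paper invokes it for general $T$ without further justification --- but the intended argument simply takes it as the starting point rather than attempting the compactness-style globalisation you sketch, which, as you observe, cannot succeed because $B_r$ is not compact in the strict topology.
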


\begin{remark}
	There is not much room between properties (a) and (c). In the case that $X$ is Polish space, and $T$ is linear then (a) and (c) are equivalent, see e.g. \cite[Corollary 3.2 and Theorem 9.1]{Se72}. It is unclear to the author whether (b) and (c) are equivalent in general.
\end{remark}

At various points in the paper, we will work with operators that are constructed by taking closures on dense sets. To do so, we need continuity properties. Even though working with (a) of \ref{proposition:buc_cont_equiv_to_double_bound} would be the desirable from a functional analytic point of view,  \eqref{item:continuity_strict_intermediate} is much more explicit, and also suffices for our analysis.

The following result is proven in \cite[Lemma A.11]{FK06}.

\begin{lemma} \label{lemma:extension_operator}
	Suppose that an operator $T : D \subseteq C_b(X) \rightarrow C_b(X)$ satisfies (b) of Proposition \ref{proposition:buc_cont_equiv_to_double_bound}.  Then $T$ has an extension to the quasi-closure $\widehat{D}$ of $D$ that also satisfies property \eqref{item:continuity_strict_intermediate} of Proposition \ref{proposition:buc_cont_equiv_to_double_bound}  (with the same choice of $\hat{K}$).
\end{lemma}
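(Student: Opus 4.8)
The plan is to define the extension by strict continuity and then to verify that property \eqref{item:continuity_strict_intermediate} of Proposition \ref{proposition:buc_cont_equiv_to_double_bound} survives passage to the limit. Fix $f \in \widehat{D}$ and choose $r > 0$ with $f \in \widehat{D}_r$, the strict closure of $D \cap B_r$. By definition of the closure there is a net $(f_\alpha) \subseteq D \cap B_r$ converging strictly, equivalently buc (Remark \ref{remark:strict_implies_buc}), to $f$; since $B_r$ is strictly closed (a strict limit is a pointwise limit, which preserves $\vn{\cdot} \leq r$) we also have $f \in B_r$. The first task is to show that $(Tf_\alpha)$ is a bounded net that is uniformly Cauchy on compacts. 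Boundedness follows by fixing a reference $f_0 \in D \cap B_r$ (nonempty, since $f$ is a strict limit of such elements) and applying \eqref{item:continuity_strict_intermediate} with a singleton $K = \{x\}$ and $\delta = 1$: because $\sup_{\hat K}|f_\alpha - f_0| \leq 2r$ for every compact $\hat K$, the resulting bound $|Tf_\alpha(x) - Tf_0(x)| \leq C_0(r) + 2r\,C_1(1,r)$ is independent of both $x$ and $\alpha$, so $\sup_\alpha \vn{Tf_\alpha} < \infty$. For the Cauchy property, fix a compact $K$ and $\epsilon > 0$, choose $\delta$ with $\delta C_0(r) < \epsilon/2$, and set $\hat K = \hat K(K,\delta,r)$. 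Since $f_\alpha \to f$ uniformly on the compact set $\hat K$, the net is uniformly Cauchy there, and \eqref{item:continuity_strict_intermediate} then gives $\sup_{x \in K}|Tf_\alpha(x) - Tf_\beta(x)| < \epsilon$ for $\alpha,\beta$ large.

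Thus $(Tf_\alpha)$ is a norm-bounded net that is uniformly Cauchy on every compact set, i.e. a Cauchy net for the strict topology restricted to a norm ball. Invoking completeness of norm-bounded subsets of $C_b(X)$ in the strict topology (the main technical input; see \cite{Se72}), the net converges strictly to some $g \in C_b(X)$, and I set $Tf := g$. Well-definedness is immediate: if $(f'_\beta) \subseteq D \cap B_r$ is a second net converging to $f$, then applying \eqref{item:continuity_strict_intermediate} to the pairs $(f_\alpha, f'_\beta)$ and using $\sup_{\hat K}|f_\alpha - f'_\beta| \to 0$ on each $\hat K$ forces the two limits to agree on every compact $K$, hence everywhere. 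Taking the constant net $f_\alpha \equiv f$ for $f \in D$ shows the construction restricts to $T$ on $D$, so it is a genuine extension.

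Finally, property \eqref{item:continuity_strict_intermediate} for the extension is obtained by passing to the limit in the inequality for the approximants. Given $f,g \in \widehat{D}$, enlarge $r$ so that both lie in $\widehat{D}_r$ and choose nets $(f_\alpha),(g_\beta) \subseteq D \cap B_r$ converging to $f,g$. For fixed $\delta$, compact $K$, and the associated $\hat K = \hat K(K,\delta,r)$, the inequality
\[
\sup_{x \in K}|Tf_\alpha(x) - Tg_\beta(x)| \leq \delta C_0(r) + C_1(\delta,r)\sup_{x \in \hat K}|f_\alpha(x) - g_\beta(x)|
\]
holds for all $\alpha,\beta$. The left-hand side converges to $\sup_{x \in K}|Tf(x) - Tg(x)|$ by uniform convergence of $Tf_\alpha$ and $Tg_\beta$ on $K$, while $\sup_{x \in \hat K}|f_\alpha - g_\beta| \to \sup_{x \in \hat K}|f - g|$ by uniform convergence on $\hat K$; this yields exactly \eqref{item:continuity_strict_intermediate} for the extension, with the same constants $C_0$, $C_1$ and the same compact set $\hat K$.

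I expect the only genuine obstacle to be the completeness step used to produce the limit $g \in C_b(X)$: one must know that a bounded net which is uniformly Cauchy on every compact set converges to a \emph{bounded and continuous} function, the continuity being the delicate point on a general completely regular $X$. This is precisely where the choice of the strict topology (rather than merely the compact-open topology) pays off, since norm-bounded subsets of $C_b(X)$ are complete for $\beta$. All remaining steps are routine manipulations of the estimate \eqref{item:continuity_strict_intermediate}.
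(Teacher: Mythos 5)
Your argument is correct in outline and is, in essence, the standard proof. Note that the paper does not prove this lemma itself: it simply cites \cite[Lemma A.11]{FK06}, so there is no in-paper argument to compare against, and your net-approximation construction is exactly what one expects that reference to do. The individual steps are sound: boundedness of $(Tf_\alpha)$ via the estimate with $\delta=1$ and a singleton $K$ (correctly observing that the bound $\sup_{\hat K}|f_\alpha-f_0|\leq 2r$ is independent of which compact $\hat K$ appears), the uniform Cauchy property on compacts via $\delta C_0(r)<\epsilon/2$, well-definedness over two approximating nets, and passage to the limit in the estimate, which indeed preserves $C_0$, $C_1$ and $\hat K$.

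The one point that deserves scrutiny is the completeness step, which you rightly isolate as the crux but state too strongly. A norm-bounded net that is uniformly Cauchy on compacts converges pointwise to a bounded function $g$ whose restriction to every compact set is continuous; on a general completely regular space this does \emph{not} force $g\in C_b(X)$. Quasi-completeness of $(C_b(X),\beta)$ is equivalent to a $k_R$-type condition on $X$ (every bounded function that is continuous on each compact set is continuous), which holds for Polish, metrizable, locally compact and, more generally, $k$-spaces --- i.e.\ every setting the paper actually uses --- but can fail for an arbitrary completely regular $X$. So your proof is complete under a mild extra hypothesis on $X$ that the paper's blanket assumption does not literally guarantee; the same caveat is implicit in the appeal to \cite[Lemma A.11]{FK06}. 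A last cosmetic remark: for $f,g\in\widehat{D}$ the constants and compact set you obtain are those attached to the radius $r$ with $f,g\in\widehat{D}_r$, which may exceed $\vn{f}\vee\vn{g}$; this reindexing is harmless for every use of the lemma in the paper.
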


\subsection{A general setting of convergence of spaces} \label{section:general_convergence1}

In previous section, we have studied buc convergence and the strict topology. This suffices for convergence problems in the context where all Hamilton-Jacobi equations $f - \lambda H_n f = h$ and $f - \lambda Hf = h$ are based on the same space $X$. In practice, however, one runs into situations where this is not natural. In the context of simple slow-fast systems for example, one typically works with $X_n = E \times F$ and $X = E$. That is, we have a slow system on $E$ that depends on a fast system taking values on $F$. Taking limits, we end up with a slow system on $E$ with coefficients that are suitable averages over $F$. Thus, we need to connect $X_n$ to $X$ via a mapping $\eta_n$ (e.g. a projection on the first coordinate) and extend the notion of buc convergence to allow for functions $f_n$ on $X_n$ to converge to $X$.

\smallskip

To extend the notion of buc convergence, we need to decide what `uniform convergence' on compacts means. Following Definition \ref{definition:buc_convergence}, we saw that $f_n$ converges to $f$ buc if $\sup_n \vn{f_n} < \infty$ and if for all compact sets $K$ we have $f_n(x_n) \rightarrow f(x)$ for all sequences $x_n \in K$ converging to $x \in K$.

In the context of distinct $X_n$ and $X$, there is no natural analogue of the compact set $K$. Instead, we will work a  sequence of compact sets. Namely, we will choose compact sets $K_n \subseteq X_n$ that `converge' to a compact set $K \subseteq X$. Then $f_n$ converges to $f$ if $\sup_n \vn{f_n} < \infty$ and if for each of these sequences of compact sets and $x_n \in K_n$ converging to $x \in K$, we have $f_n(x_n) \rightarrow f(x)$.

\smallskip

We turn to the rigorous definition. We will slightly extend our discussion above by allowing spaces $X_n$ and $X$ such that the $X_n$ are not naturally embedded in $X$. Instead, we will map all spaces to a common space $\cX$ in which `$X_n$ converges to $X$'. 

\begin{assumption} \label{assumption:abstract_spaces}
	Consider spaces $X_n$ and $X$, some space $\cX$, Baire measurable maps $\eta_n : X_n \rightarrow \cX$ and a Baire measurable injective map $\eta : X \rightarrow \cX$. 
\end{assumption}

\begin{definition}[Kuratowski convergence]
	Let $\{O_n\}_{n \geq 1}$ be a sequence of subsets in a space $\cX$. We define the \textit{limit superior} and \textit{limit inferior} of the sequence as
	\begin{align*}
	\limsup_{n \rightarrow \infty} O_n & := \left\{x \in \cX \, \middle| \, \forall \, U \in \cU_x \, \forall \, N \geq 1 \, \exists \, n \geq N: \, O_n \cap U \neq \emptyset \right\}, \\
	\liminf_{n \rightarrow \infty} O_n & := \left\{x \in \cX \, \middle| \, \forall \, U \in \cU_x \, \exists \, N \geq 1 \, \forall \, n \geq N: \, O_n \cap U \neq \emptyset \right\}.
	\end{align*}
	where $\cU_x$ is the collection of open neighbourhoods of $x$ in $\cX$. If $O := \limsup_n O_n = \liminf_n O_n$, we write $O = \lim_n O_n$ and say that $O$ is the Kuratowski limit of the sequence $\{O_n\}_{n \geq 1}$.
\end{definition}

\begin{assumption} \label{assumption:abstract_spaces_q}
	There is a directed set $\cQ$ (partially ordered set such that every two elements have an upper bound). For each $q \in \cQ$, we have compact sets $K_n^q \subseteq X_n$ a compact set $K^q \subseteq X$ such that
	\begin{enumerate}[(a)]
		\item If $q_1 \leq q_2$, we have $K^{q_1} \subseteq K^{q_2}$ and for all $n$ we have $K_{n}^{q_1} \subseteq K_n^{q_2}$.
		\item For all $q \in \cQ$ and each sequence $x_n \in K_n^q$, every subsequence of $x_n$ has a further subsequence that is converging to a limit $x \in K^q$ (that is: $\eta_n(x_n) \rightarrow \eta(x)$ in $\cX$). 
		\item For each compact set $K \subseteq X$, there is a $q \in \cQ$ such that
		\begin{equation*}
		\eta(K) \subseteq \liminf_n \eta_n(K_n^q).
		\end{equation*}
	\end{enumerate}
\end{assumption}

\begin{remark}
	Note that (b) implies that $\limsup_n \eta_n(K_n^q) \subseteq \eta(K^q)$. Note that (b) follows if  $\bigcup_n \eta_n(K_n^q)$ is a subset of $\eta(K^q)$ and the topology on $K^q$ is metrizable.
\end{remark}

Conditions (b) should be interpreted in the sense that $K^q$ is larger than the `limit' of the sequence $K_n$, whereas (c) should be interpreted in the sense that each compact $K$ in $X$ is contained in a limit of that type.

We will say that a sequence $x_n \in X_n$ converges to $x \in X$ in the sense that $\eta_n(x_n) \rightarrow \eta(x)$ in $\cX$. Dual to the notion of convergence in a topological space, there is the notion of convergence of functions.

\begin{definition} \label{definition:abstract_LIM}
	Let Assumptions \ref{assumption:abstract_spaces} and \ref{assumption:abstract_spaces_q} be satisfied. For each $n$ let $f_n \in M_b(X_n)$ and $f \in M_b(X)$. We say that $\LIM f_n = f$ if
	\begin{itemize}
		\item $\sup_n  \vn{f_n} < \infty$,
		\item if for all $q \in \cQ$ and $x_n \in K_n^q$ converging to $x \in K^q$ we have
		\begin{equation*}
		\lim_{n \rightarrow \infty} \left|f_n(x_n) - f(x)\right| = 0.
		\end{equation*}
	\end{itemize}
\end{definition}

The notion of bounded and uniform on compacts (buc) is  the prime example of a notion of $\LIM$. For a second example see Example 2.7 in \cite{FK06}. 

\begin{example}[buc convergence] \label{example:LIM_buc_convergence}
	Consider some space $X$ in which all compact sets are metrizable, and suppose that $X_n = X$ and that $\eta_n$ is the identity map for all $n$. In this context, we can choose $\cX = X$ and $\eta$ the identity map. $\bQ$ is the set of compact subsets. For $\cK \in \cQ$, we take $K_n^\cK = K^\cK = \cK$.
	
	Note that we need metrizable compact sets to extract converging subsequences for Assumption \ref{assumption:abstract_spaces_q} (b).
	
	\smallskip

	We have $\LIM f_n = f$ if and only if $\sup_{n} \vn{f_n} < \infty$ and if for all $\cK$ and all sequences $x_n \in \cK$ converging to $x \in \cK$, we have $\lim_n f_n(x_n) = f(x)$.
\end{example}

\begin{remark}
	In the setting that $\cX = X$ whose topologies coincide, we can compare the notion of $\LIM$ we introduced to that which is used in \cite{FK06}. Indeed, it is straightforward to show that both notions of $\LIM f_n = f$ for a sequence of functions coincide if the limiting function $f$ is continuous. 
\end{remark}

\begin{remark}
	The notion of $\LIM$ is subtle. It does not require $f_n(x_n) \rightarrow f(x)$ for all sequences $x_n$ such that $\eta_n(x_n) \rightarrow \eta(x)$ in $\cX$.
	
	For example, let $X_n = \bR\times \bR$, $\cX = X = \bR$, $\eta_n(x,y) = x$ and $\eta(x) = x$. We could work with an index set $\cQ$ consisting of all compact sets $[a,b]\times[c,d]$ in $\bR^2$. Then $K_n^{[a,b]\times[c,d]} = [a,b]\times[c,d]$ and $K^{[a,b]\times[c,d]} = [a,b]$. Clearly the sequence $x_n := (x,n)$ satisfies $\eta_n(x,n) = x$ which converges to $x$. There is however, no compact set $[a,b]\times [c,d]$ such that $(x,n)$ lies in this set for all $n$. Thus, we do not need to check convergence along this sequence in Definition \ref{definition:abstract_LIM}.
\end{remark}

\begin{remark} \label{remark:balance_LIM_and_equi_cont}
	Proceeding with last remark. Note that we could have chosen different compact sets with the same index set. E.g., we could have chosen $K_n^{[a,b]\times[c,d]} = [a,b]\times[nc,nd]$ and $K^{[a,b]\times[c,d]} = [a,b]$. This leads to a larger collection of sequences for which we have to verify convergence for $\LIM$.
	
	In Section \ref{section:equi_cont_and_LIM} below, we will see that we can define a notion of equi-continuity of operators on the spaces $X_n$ based on the set $\cQ$ and compacts $K_n^q$.
	
	Indeed, in Condition \ref{condition:convergence_of_generators_and_conditions_extended_supsuperlim}, key for our main results, we will assume that we have converge of Hamiltonians in the sense of $\LIM$, and have equi-continuity for the resolvents in terms of $K_n^q$. This leads to a careful balance: choose small sets $K_n^q$, then verifying convergence with $\LIM$ is easy whereas verifying equi-continuity becomes hard and vice versa. Thus, the choice of $K_n^q$ is context dependent and requires insight into the problem at hand.
\end{remark}

The characterization of $f = \LIM f_n$ allows for generalization of the $\limsup$ and $\liminf$ as well.

\begin{definition} \label{definition:abstract_LIMSUP_LIMINF}
	Let Assumptions \ref{assumption:abstract_spaces} and \ref{assumption:abstract_spaces_q} be satisfied. Let $f_n \in M(X_n)$. 
	\begin{enumerate}[(a)]
		\item \label{item:abstract_LIMSUP} Let $f \in USC_u(X)$. We say that $\LIMSUP f_n = f$ if
		\begin{itemize}
			\item $\sup_{n} \sup_{x \in X_n} f_n(x) < \infty$,
			\item if 
			\begin{equation*}
			f(x) = \sup_{q \in \cQ} \sup \left\{ \limsup_{n \rightarrow \infty} f_n(x_n) \, \middle| \, x_n \in K_n^q, \, \eta_n(x_n) \rightarrow \eta(x) \right\}.
			\end{equation*}
		\end{itemize}
		\item Let $f \in LSC_l(X)$. We say that $\LIMINF_n f_n = f$ if
		\begin{itemize}
			\item $\inf_{n}  \inf_{x \in X_n} f_n(x) > - \infty$,
			\item if 
			\begin{equation*}
			f(x) := \inf_{q \in \cQ} \inf \left\{ \liminf_{n \rightarrow \infty} f_n(x_n) \, \middle| \, x_n \in K_n^q, \, \eta_n(x_n) \rightarrow \eta(x) \right\}.
			\end{equation*}
		\end{itemize}
	\end{enumerate} 
\end{definition}

The following is immediate.

\begin{lemma}
	Let Assumptions \ref{assumption:abstract_spaces} and \ref{assumption:abstract_spaces_q} be satisfied. Suppose that $\LIMSUP_n f_n \leq f \leq \LIMINF f_n$, then $\LIM f_n = f$.
\end{lemma}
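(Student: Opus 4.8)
The plan is to verify the two defining clauses of $\LIM f_n = f$ directly from Definition \ref{definition:abstract_LIM}, the one genuine idea being that any single admissible sequence occurring in that definition is \emph{simultaneously} a competitor in the supremum defining $\LIMSUP_n f_n$ and in the infimum defining $\LIMINF_n f_n$; this forces a squeeze. First I would dispose of the uniform boundedness clause. The existence of $\LIMSUP_n f_n$ already supplies $\sup_n \sup_x f_n(x) < \infty$, and the existence of $\LIMINF_n f_n$ supplies $\inf_n \inf_x f_n(x) > -\infty$; combining these two one-sided bounds gives $\sup_n \vn{f_n} < \infty$, which is the first requirement for $\LIM$.

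The main step is the pointwise convergence along admissible sequences. I would fix $q \in \cQ$ and a sequence $x_n \in K_n^q$ with $\eta_n(x_n) \to \eta(x)$ for some $x \in K^q$, exactly as in the definition of $\LIM$. The key observation is that this very sequence is one of the sequences entering the supremum in Definition \ref{definition:abstract_LIMSUP_LIMINF}\eqref{item:abstract_LIMSUP} (taken with the same index $q$), so $\limsup_{n} f_n(x_n) \leq \left(\LIMSUP_n f_n\right)(x)$; by the left half of the hypothesis this is $\leq f(x)$. Symmetrically, the same sequence competes in the infimum defining $\LIMINF_n f_n(x)$, so $\liminf_{n} f_n(x_n) \geq \left(\LIMINF_n f_n\right)(x) \geq f(x)$ by the right half of the hypothesis. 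Chaining these against the trivial inequality $\liminf_n f_n(x_n) \leq \limsup_n f_n(x_n)$ gives
\[
f(x) \leq \liminf_{n} f_n(x_n) \leq \limsup_{n} f_n(x_n) \leq f(x),
\]
whence all four quantities coincide, $\lim_n f_n(x_n) = f(x)$, and in particular $\lim_n \left|f_n(x_n) - f(x)\right| = 0$, which is precisely the second requirement for $\LIM f_n = f$.

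There is essentially no obstacle here, and the lemma is rightly labelled immediate; the only point deserving a moment's care is to confirm that the chosen sequence really qualifies as an admissible competitor in both variational formulas. This reduces to matching the constraints $x_n \in K_n^q$ and $\eta_n(x_n) \to \eta(x)$ imposed in the definition of $\LIM$ against the identical constraints appearing in Definition \ref{definition:abstract_LIMSUP_LIMINF}. Since $\eta$ is injective the limit point $x \in K^q \subseteq X$ is unambiguous, and as $f$ is being evaluated at this same $x$ while lying in $USC_u(X)$ (for the $\LIMSUP$ comparison) and in $LSC_l(X)$ (for the $\LIMINF$ comparison), no further subtlety intervenes.
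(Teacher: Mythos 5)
Your proof is correct and is exactly the squeeze argument the paper has in mind when it declares the lemma immediate (the paper supplies no written proof). The two clauses of Definition \ref{definition:abstract_LIM} are verified just as you do: boundedness from the two one-sided uniform bounds, and pointwise convergence along admissible sequences by observing that each such sequence competes in both variational formulas of Definition \ref{definition:abstract_LIMSUP_LIMINF}.
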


\subsection{Joint equicontinuity of operators and \texorpdfstring{$\LIM$}{LIM}} \label{section:equi_cont_and_LIM}

It is a general fact from topology that if $T_n$ are equi-continuous functions on some space and if $f_{1,n} \rightarrow f$ and $f_{2,n} \rightarrow f$, then $T_nf_{1,n} - Tf_{2,n} \rightarrow 0$. 

We now show that equi-continuity in the sense of \eqref{item:continuity_strict_intermediate} of Proposition \ref{proposition:buc_cont_equiv_to_double_bound} combines with the notion of $\LIM$ in a similar way. Afterwards, we will show that we can use $\LIM$ and a collection of equi-continuous operators to define a limiting operator.

\begin{definition} \label{definition:joint_strict_equicontinuity_boundedsets}
	Let Assumptions \ref{assumption:abstract_spaces} and \ref{assumption:abstract_spaces_q} be satisfied. Let $T_n : B_n \subseteq M_b(X_n) \rightarrow B_n$ be operators. 
	
	We say that the collection $\{T_n\}_{n \geq 1}$ is \textit{strictly equi-continuous on bounded sets} if the following holds.
	For all $q \in \cQ$, $r > 0$ and $\delta > 0$, there is a $\hat{q} \in \cQ$ and constants $C_0(r), C_1(\delta,r)$ such that for all $n$ and $h_{1},h_{2} \in B_n$ with $\vn{h_1} \vee \vn{h_2} \leq r$ we have
	\begin{equation*}
	\sup_{y \in K_n^q} \left\{ T_n h_{1}(y) - T_n h_{2}(y) \right\} \leq \delta C_0(r) + C_1(\delta,r) \sup_{y \in K^{\hat{q}}_n} \left\{ h_{1}(y) - h_{2}(y) \right\}.
	\end{equation*}
\end{definition}

\begin{lemma} \label{lemma:joint_equicontinuity_with_convergence}
	Let Assumptions \ref{assumption:abstract_spaces} and \ref{assumption:abstract_spaces_q} be satisfied. Let $T_n : B_n \subseteq M_b(X_n) \rightarrow B_n$ be a collection of operators that is strictly equi-continuous on bounded sets.
	
	Suppose that $h_{1,n},h_{2,n} \in B_n$ and that $\LIM h_{1,n} = \LIM h_{2,n}$. Then it holds that $\LIM T_nh_{1,n} - T_n h_{2,n} = 0$.
	In particular, if $\LIM T_n h_{1,n}$ exists, then $\LIM T_n h_{2,n}$ exists also and is the same.
\end{lemma}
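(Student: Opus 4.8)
The plan is to verify directly the two defining requirements of $\LIM g_n = 0$ in Definition \ref{definition:abstract_LIM} for the sequence $g_n := T_n h_{1,n} - T_n h_{2,n}$ against the limit function $0$. Write $h$ for the common value $\LIM h_{1,n} = \LIM h_{2,n}$ and set $c := \sup_n \left(\vn{h_{1,n}} \vee \vn{h_{2,n}}\right)$, which is finite precisely because both $\LIM$'s exist.

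The heart of the argument, and the place where the structure of the spaces really enters, is the auxiliary claim that for every $q' \in \cQ$ one has
\[
\lim_{n \to \infty} \sup_{y \in K_n^{q'}} \left|h_{1,n}(y) - h_{2,n}(y)\right| = 0 .
\]
I would prove this by contradiction. If it failed, then after passing to a subsequence there would be an $\epsilon > 0$ and near-maximizers $y_n \in K_n^{q'}$ with, say, $h_{1,n}(y_n) - h_{2,n}(y_n) \geq \epsilon$ (the reversed inequality being symmetric). By Assumption \ref{assumption:abstract_spaces_q}(b), after passing to a further subsequence we have $\eta_n(y_n) \to \eta(y)$ for some $y \in K^{q'}$; applying the definition of $\LIM$ to each of the two sequences (both of which converge to the \emph{same} $h$) gives $h_{1,n}(y_n) \to h(y)$ and $h_{2,n}(y_n) \to h(y)$, so their difference tends to $0$, contradicting the bound $\geq \epsilon$. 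It is exactly here that the coincidence of the two limits is indispensable, and the only subtlety is the passage to a convergent subsequence, which Assumption \ref{assumption:abstract_spaces_q}(b) is tailored to supply.

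With the claim in hand the remainder is bookkeeping against the equi-continuity estimate. Fix $q \in \cQ$ and $\delta > 0$, and let $\hat q$, $C_0(c)$, $C_1(\delta,c)$ be as provided by Definition \ref{definition:joint_strict_equicontinuity_boundedsets} applied with bound $c$. Then for every $n$
\[
\sup_{y \in K_n^q}\left\{g_n(y)\right\} \leq \delta C_0(c) + C_1(\delta,c) \sup_{y \in K_n^{\hat q}}\left\{h_{1,n}(y) - h_{2,n}(y)\right\}.
\]
Taking $\limsup_n$ and invoking the claim with $q' = \hat q$ makes the last term tend to $0$, leaving $\limsup_n \sup_{y \in K_n^q} g_n(y) \leq \delta C_0(c)$. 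Since $C_0(c)$ is independent of $\delta$, letting $\delta \downarrow 0$ yields $\limsup_n \sup_{y \in K_n^q} g_n(y) \leq 0$; exchanging the roles of $h_{1,n}$ and $h_{2,n}$ gives the same bound for $-g_n$, so $\lim_n \sup_{y \in K_n^q} |g_n(y)| = 0$ for every $q$. In particular $g_n(x_n) \to 0$ along any $x_n \in K_n^q$ converging to $x \in K^q$, which is the second requirement of Definition \ref{definition:abstract_LIM} with limit function $0$.

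The remaining requirement, the uniform bound $\sup_n \vn{g_n} < \infty$, is the one genuinely delicate point, and I expect it to be the main obstacle: the equi-continuity estimate controls $g_n$ only on the compacts $K_n^q$ and hence does not on its own bound $\vn{g_n}$ over all of $X_n$. In the applications driving this paper it is immediate, since the relevant $T_n$ are contractive and therefore $\vn{g_n} = \vn{T_n h_{1,n} - T_n h_{2,n}} \leq \vn{h_{1,n} - h_{2,n}} \leq 2c$; in general one records the (mild) hypothesis that $\{T_n\}$ carries uniformly norm-bounded families to uniformly norm-bounded families. Granting this, the two requirements combine to give $\LIM g_n = 0$. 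Finally, the concluding assertion is elementary: if $\LIM T_n h_{1,n}$ exists and equals some $\phi$, then writing $T_n h_{2,n} = T_n h_{1,n} - g_n$ and using that subtracting a sequence with $\LIM = 0$ leaves both the uniform bound and the limit on compacts unchanged shows $\LIM T_n h_{2,n} = \phi$.
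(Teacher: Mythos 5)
Your proof is correct and follows essentially the same route as the paper: reduce to the compacts $K_n^q$, invoke the equi-continuity estimate to pass to $\sup_{K_n^{\hat q}}\{h_{1,n}-h_{2,n}\}$, show this vanishes because the two sequences share the same $\LIM$ (a step the paper asserts in one line and you justify via the subsequence extraction of Assumption \ref{assumption:abstract_spaces_q}(b)), and then let $\delta \downarrow 0$. The uniform norm bound on $T_n h_{1,n} - T_n h_{2,n}$ that you flag is not addressed in the paper's proof either --- it phrases the conclusion through $\LIMSUP \leq 0 \leq \LIMINF$, whose one-sided boundedness requirements are likewise left unverified --- so your explicit remark is a fair observation rather than a defect relative to the paper.
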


\begin{proof}
	Pick $h_{1,n},h_{2,n} \in B_n$ and that $\LIM h_{1,n} = \LIM h_{2,n}$. We establish that $\LIMSUP T_n h_{1,n} - T_n h_{2,n} \leq 0$. By interchanging the roles of $h_{1,n}$ and $h_{2,n}$ this yields the statement for $\LIMINF$ which establishes the claim.
	
	To do so, it suffices for any $q \in \cQ$ and $x_n \in K_n^q$ and $x$ such that $\eta_n(x_n) \rightarrow \eta(x)$ to establish that
	\begin{equation*}
	\limsup_n T_n h_{1,n}(x_n) - T_n h_{2,n}(x_n) \leq 0.
	\end{equation*}
	As $\LIM h_{1,n}, \LIM h_{2,n}$ exists, there is some $r > 0$ such that $\sup_n \vn{h_{1,n}} \vee \vn{h_{2,n}} \leq r$. Thus, by joint strict local equi-continuity of the operators $\{T_n\}$ we can find for any $\delta >0$ a $\hat{q}$ and constants $C_0(\delta), C_1(\delta,r)$ such that
	\begin{multline*}
	\limsup_n T_n h_{1,n}(x_n) - T_n h_{2,n}(x_n) \\
	\leq \delta C_0(r) + C_{1}(r,\delta) \limsup_n \sup_{y \in K_n^{\hat{q}}} h_{1,n}(y) - h_{2,n}(y).
	\end{multline*}
	As $\LIM h_{1,n} = \LIM h_{2,n}$ the $\limsup_n$ on the right equals $0$. Sending $\delta \rightarrow 0$, the first claim follows.
	The final claim is a direct consequence of the triangle inequality.
\end{proof}

In next proposition, we show how to use the result of previous lemma to construct a limiting operator our of a sequence of operators that are strictly equi-continuous on bounded sets.
	
	\begin{proposition} \label{proposition:extending_operators_via_LIM}
		Let Assumptions \ref{assumption:abstract_spaces} and \ref{assumption:abstract_spaces_q} be satisfied. Let $T_n : B_n \subseteq M_b(X_n) \rightarrow B_n$ be a strictly equi-continuous on bounded sets.
		Suppose the spaces $B_n$ are such that there is a $M > 0$ such that for all $h \in C_b(X)$ there are $h_n \in B_n$ such that $\LIM h_n = h$ and $\sup_n \vn{h_n} \leq M \vn{h}$.

		Set 
		\begin{equation*}
		\cD(T) := \left\{h \in C_b(X) \, \middle| \, \exists \, h_n \in B_n: h = \LIM h_n, \LIM T_nh_n \text{exists and is continuous}  \right\}
		\end{equation*}
		and $Th = \LIM T_nh_n$. Note that $T$ is well defined because of Lemma \ref{lemma:joint_equicontinuity_with_convergence}
		
		Then:
		\begin{enumerate}[(a)]
			\item $T$ is strictly continuous on bounded sets in the sense of \eqref{item:continuity_strict_intermediate} of Proposition \ref{proposition:buc_cont_equiv_to_double_bound}.
			\item The set $\cD(T)$ is quasi-closed in $C_b(X)$.
			\item If $h \in \cD(T)$ and $h_n \in B_n$ such that $\LIM h_n = h$, then $\LIM T_n h_n = Th$.
		\end{enumerate}
	\end{proposition}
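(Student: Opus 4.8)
The plan is to treat the three claims in the order (c), (a), (b), since (c) is just the well-definedness already announced after the definition of $T$, while (a) supplies the continuity estimate that (b) relies on. For (c): let $h \in \cD(T)$ with a witnessing sequence $\tilde h_n \in B_n$ (so $\LIM \tilde h_n = h$ and $\LIM T_n \tilde h_n = Th$), and let $h_n \in B_n$ be any sequence with $\LIM h_n = h$. Then $\LIM h_n = \LIM \tilde h_n$, so Lemma \ref{lemma:joint_equicontinuity_with_convergence} gives $\LIM(T_n h_n - T_n \tilde h_n) = 0$; by the triangle inequality $\LIM T_n h_n$ exists and equals $Th$, which is exactly (c).

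For (a), I would fix a compact $K \subseteq X$, $\delta > 0$ and $r > 0$. Using Assumption \ref{assumption:abstract_spaces_q}(c), choose $q \in \cQ$ with $\eta(K) \subseteq \liminf_n \eta_n(K_n^q)$, and apply strict equi-continuity on bounded sets (Definition \ref{definition:joint_strict_equicontinuity_boundedsets}) at $q$, level $Mr$ and $\delta$ to obtain $\hat q \in \cQ$ and constants $C_0(Mr), C_1(\delta, Mr)$. The claim will be that Proposition \ref{proposition:buc_cont_equiv_to_double_bound}\eqref{item:continuity_strict_intermediate} holds with $\hat K := K^{\hat q}$. Given $f, g \in \cD(T)$ with $\vn{f} \vee \vn{g} \leq r$, the norm-control hypothesis and part (c) let me pick $f_n, g_n \in B_n$ with $\LIM f_n = f$, $\LIM g_n = g$, $\sup_n \vn{f_n} \vee \vn{g_n} \leq Mr$, $\LIM T_n f_n = Tf$ and $\LIM T_n g_n = Tg$. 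For $x \in K$, since $\eta(x) \in \liminf_n \eta_n(K_n^q)$ and compacts are metrizable, I can select $x_n \in K_n^q$ with $\eta_n(x_n) \to \eta(x)$; Assumption \ref{assumption:abstract_spaces_q}(b) with injectivity of $\eta$ forces $x \in K^q$, whence $T_n f_n(x_n) \to Tf(x)$ and $T_n g_n(x_n) \to Tg(x)$. For each $n$ one has
\begin{equation*}
T_n f_n(x_n) - T_n g_n(x_n) \leq \sup_{y \in K_n^q}\{T_n f_n(y) - T_n g_n(y)\} \leq \delta C_0(Mr) + C_1(\delta, Mr)\sup_{y \in K_n^{\hat q}}\{f_n(y) - g_n(y)\},
\end{equation*}
and letting $n \to \infty$ (then swapping $f,g$) reduces everything to the key inequality $\limsup_n \sup_{y \in K_n^{\hat q}}\{f_n(y) - g_n(y)\} \leq \sup_{y \in K^{\hat q}}\{f(y) - g(y)\}$.

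This key inequality is where I expect the real difficulty. Writing $\psi_n := f_n - g_n$ and $\psi := f - g$, so $\LIM \psi_n = \psi$ with $\psi$ continuous, I would pick near-maximizers $y_n \in K_n^{\hat q}$ with $\psi_n(y_n) \geq \sup_{K_n^{\hat q}}\psi_n - n^{-1}$, extract a subsequence realizing $L := \limsup_n \sup_{K_n^{\hat q}}\psi_n$, and use Assumption \ref{assumption:abstract_spaces_q}(b) to refine to a sub-subsequence $m_j$ with $\eta_{m_j}(y_{m_j}) \to \eta(y^*)$, $y^* \in K^{\hat q}$. The inequality then follows from $\psi_{m_j}(y_{m_j}) \to \psi(y^*) \leq \sup_{K^{\hat q}}\psi$, and \emph{this} convergence is the delicate step: $\LIM$ as defined constrains only full sequences $x_n \in K_n^{\hat q}$, whereas $(y_{m_j})$ lives on a subsequence. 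In contrast to the points of $K$ — for which Assumption \ref{assumption:abstract_spaces_q}(c) gives the $\liminf$-containment needed to build a genuine full approximating sequence — here only the subsequential ($\limsup$) information of (b) is available, so one cannot simply extend $(y_{m_j})$ to a full $\cX$-convergent sequence and invoke $\LIM$. I therefore anticipate that the crux is an auxiliary lemma on upper semi-continuity of suprema over the $K_n^{\hat q}$ along $\LIM$, whose proof must exploit the subsequential compactness of the \emph{entire} near-maximizing sequence together with metrizability of the compacts, rather than a pointwise application of $\LIM$.

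For (b), once (a) is in hand $T$ satisfies Proposition \ref{proposition:buc_cont_equiv_to_double_bound}\eqref{item:continuity_strict_intermediate} on $\cD(T)$, so Lemma \ref{lemma:extension_operator} extends it to the quasi-closure $\widehat{\cD(T)}$ with the same modulus; it then suffices to prove $\widehat{\cD(T)} \subseteq \cD(T)$, which yields quasi-closedness. Fixing $r$ and $h$ in the strict closure of $\cD(T) \cap B_r$, with $h^{(k)} \in \cD(T) \cap B_r$ and $h^{(k)} \to h$ buc, the estimate of (a) applied to $h^{(k)} - h^{(l)}$ shows $(Th^{(k)})_k$ is buc-Cauchy and bounded, hence converges buc to a continuous $u =: \bar T h$. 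Choosing for each $k$ an approximating sequence $(h^{(k)}_n)_n$ with $\vn{h^{(k)}_n} \leq Mr$, $\LIM_n h^{(k)}_n = h^{(k)}$ and $\LIM_n T_n h^{(k)}_n = Th^{(k)}$, I would diagonalize: pick $k(n) \to \infty$ slowly enough that $h_n := h^{(k(n))}_n$ satisfies $\LIM_n h_n = h$ and $\LIM_n T_n h_n = u$, exhibiting $h \in \cD(T)$ with $Th = u$. The diagonalization again relies on metrizability of the compacts, so that a single rate $k(n)$ controls convergence on the relevant $K_n^q$ simultaneously; after the suprema lemma above, this bookkeeping is the remaining technical point.
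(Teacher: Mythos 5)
Your parts (c) and (a) follow the paper's own route. Part (c) is exactly the paper's one-line application of Lemma \ref{lemma:joint_equicontinuity_with_convergence}. For (a), the paper does precisely what you outline, and the ``delicate step'' you flag is resolved with no auxiliary lemma: Assumption \ref{assumption:abstract_spaces_q}(b) yields a subsequence of the near-maximizers with $\eta_{n(k)}(y_{n(k)}) \rightarrow \eta(y_0)$, $y_0 \in K^{\hat q}$, and the paper then simply applies the convergence clause of $\LIM f_n = f$ and $\LIM g_n = g$ along that subsequence to get $f_{n(k)}(y_{n(k)}) - g_{n(k)}(y_{n(k)}) \rightarrow f(y_0) - g(y_0) \leq \sup_{K^{\hat q}}\{f(y)-g(y)\}$ --- i.e.\ exactly the ``pointwise application of $\LIM$'' you say cannot work. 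You are right that Definition \ref{definition:abstract_LIM} is literally phrased for full sequences, but the paper consistently reads it as applying along convergent subsequences (the same move is made in the proof of Proposition \ref{proposition:existence_viscosity_sub_super_solutions}); under your stricter reading the issue sits in the definition, not in the proof, and no suprema lemma of the kind you anticipate is available that avoids it.

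The genuine gap is in (b), where you depart from the paper. Your diagonalization requires a single rate $k(n) \rightarrow \infty$ for which $\LIM_n h_n^{(k(n))} = h$ and $\LIM_n T_n h_n^{(k(n))} = u$, i.e.\ a rate that works simultaneously for \emph{every} $q \in \cQ$ and every test sequence $x_n \in K_n^q$. For fixed $k$ the convergence $h_n^{(k)}(x_n) \rightarrow h^{(k)}(x)$ has a rate depending both on $q$ and on the particular sequence $(x_n)$, and $\cQ$ is an arbitrary directed set with no countable cofinal subfamily assumed; metrizability of the individual compacts does not produce a uniform rate over this (possibly uncountable) family, so ``slowly enough'' is not justified. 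The paper avoids the diagonal altogether: it fixes \emph{one} sequence $h_n$ with $\LIM h_n = h$, defines $f := Th$ via the extension from Lemma \ref{lemma:extension_operator}, chooses $\LIM$-approximations $f_n$ of $f$ and $h_n^{K,\varepsilon}, f_n^{K,\varepsilon}$ of a single approximant $h^{K,\varepsilon} \in \cD(T)$ (with $K = K^{\hat q}$ the compact supplied by equi-continuity for the given $q,\delta$), and bounds $\sup_{y \in K_n^q}|T_n h_n(y) - f_n(y)|$ by a three-term triangle inequality: the first term via equi-continuity of the $T_n$, the middle term via Lemma \ref{lemma:joint_equicontinuity_with_convergence}, the last via the strict continuity of $T$ established in (a). Since this works with one $\varepsilon$ and one $q$ at a time, no uniformity in $k$ is ever needed; you should replace the diagonal by this argument.
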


	\begin{remark}
		In Lemma \ref{lemma:construction_f_n_that_converge_to_f} below, we will see that in the context that if the maps $\eta_n$ are continuous and $\eta$ is a homeomorphism onto its image, we can indeed always find $h_n$ such that $\LIM h_n = h$ and $\sup_n \vn{h_n} \leq \vn{h}$.
	\end{remark}

	The proof is inspired by Lemma 7.16 (b) and (c) in \cite{FK06}.
	
	\begin{proof}[Proof of Proposition \ref{proposition:extending_operators_via_LIM}]
		We start by by proving (a). Fix $r > 0$, a compact set $K \subseteq X$ and $\delta > 0$. We prove that there are constants $\hat{C}_0(r), \hat{C}_1(\delta,r)$ and a compact set $\widehat{K} = \widehat{K}(K,\delta,r)$ such that
		\begin{equation*}
		\sup_{x \in K} |Tf(x) -Tg(x)| \leq \delta \hat{C}_0(r) + \hat{C}_1(\delta, r) \sup_{x \in \hat{K}} |f(x) - g(x)|
		\end{equation*}
		for all $f,g \in \cD(T)$ such that $\vn{f} \vee \vn{g} \leq r$.
		
		\smallskip
		
		Thus, fix $f,g \in \cD(T)$ such that $\vn{f}\vee \vn{g} \leq r$ and let $x_0 \in K$ be such that 
		\begin{equation*}
		Tf(x_0) - Tg(x_0) = \sup_{y \in K} Tf(y) - Tg(y)
		\end{equation*}
		Let $f_n,g_n \in B_n$ such that $\LIM f_n = f$ and $\LIM g_n = g$ and $\sup_n \vn{f_n} \leq M \vn{f}$, $\sup_n \vn{g_n} \leq M \vn{g}$.  By Assumption \ref{assumption:abstract_spaces_q} (c), there is a $q$ with $K \subseteq \liminf_n \eta_n(K_n^q)$. Because of this, we can choose $x_n \in K_n^q$ such that $\lim_n \eta_n(x_n) = \eta(x_0)$ in $\cX$. By definition of $\cD(T)$ and $T$, we have $Tf(x_0) - Tg(x_0) = \lim_n T_nf_{n}(x_n) - T_n g_{n}(x_n)$. It follows by strict equi-continuity on bounded sets that there is a $\hat{q}$, $C_0(r)$ and $C_1(r,\delta)$ such that
		\begin{align*}
		\sup_{y \in K} Tf(y) - Tg(y) & = \lim_n T_nf_{n}(x_n) - T_ng_{n}(x_n) \\
		& \leq \lim_n \left(\delta C_0(r) M + C_1(r,\delta) \sup_{y \in K_n^{\widehat{q}}} \left\{ f_{n}(y) - g_{n}(y) \right\}\right).
		\end{align*}
		Set $\hat{C}_0(r) = MC_0(r)$ and $\hat{C}_1(\delta,r) = C_1(\delta,r)$. Finally, let $y_n \in K_n^{\widehat{q}}$ be such that $f_n(y_n) - g_n(y_n) + n^{-1} \geq  \sup_{y \in K_n^{\widehat{q}}} \left\{ f_{n}(y) - g_{n}(y) \right\}$. By Assumption \ref{assumption:abstract_spaces_q} (b), $y_n$ has a converging subsequence with a limit $y_0 \in K^{\hat{q}}$. We obtain that 
		\begin{align*}
		\sup_{y \in K} \left\{ Tf(y) - Tg(y) \right\}& \leq\delta \hat{C}_0(r) + \hat{C}_1(\delta,r) \left( f(y_0) - g(y_0) \right) \\
		&\leq\delta \hat{C}_0(r) + \hat{C}_1(\delta,r) \sup_{y \in K^{\widehat{q}}} \left\{ f(y) - g(y) \right\}.
		\end{align*}
		This establishes strict continuity on bounded sets for $T$.
		
		\smallskip
		
		We proceed with the proof of (b). First note that by strict continuity on bounded sets, and Lemma \ref{lemma:extension_operator}, we can extend $T$ to the quasi-closure of $\cD(T)$, on which $T$ is also strictly continuous on bounded sets.
		
		Next, we show that $\cD(T)$ was in fact quasi-closed to begin with. Let $h$ be in the the strict closure of $\cD(T) \cap B_R$ for some $R > 0$. We prove that $h \in \cD(T) \cap B_R$. Using the extension of $T$ to the quasi-closure of $\cD(T)$, we can define $f := T h$. Let $h_n \in B_n$ such that $\LIM h_n = h$ and $\sup_n \vn{h_n} \leq M \vn{h}$. To establish (b), we need to prove that $\LIM T_n h_n = f$.
		
		\smallskip
		
		On bounded sets, the strict topology coincides with the compact-open topology. Thus, there are functions $h^{K,\varepsilon} \in \cD(T)$ such that $\sup_{K,\varepsilon} \vn{h^{K,\varepsilon}} \leq R$ and 
		\begin{equation*}
		\sup_{y \in K} \left| h(y) - h^{K,\varepsilon}(y) \right|\leq \varepsilon.
		\end{equation*}
		Define $f^{K,\varepsilon} := T h^{K,\varepsilon}$. Furthermore, find $f_n, f_n^{K,\varepsilon}, h_n^{K,\varepsilon}$ such that
		\begin{equation*}
		\LIM f_n = f, \qquad \LIM f_n^{K,\varepsilon} = f^{K,\varepsilon}, \qquad \LIM h_n^{K,\varepsilon} = h^{K,\varepsilon},
		\end{equation*}
		and such that we have an upper bound $r$ for the norms of all involved functions. To establish that $\LIM T_n h_n = f$, it suffices by Lemma \ref{lemma:joint_equicontinuity_with_convergence} to prove that $\LIM T_n h_n - f_n = 0$.
		
		Fix an arbitrary $q$ and $\varepsilon$. Then it suffices to prove that 
		\begin{equation} \label{eqn:bound_on_difference_closedness_set}
		\lim_{n \rightarrow \infty} \sup_{y \in K_n^q} \left|T_n h_n(y) - f_n(y) \right| \leq 4\varepsilon.
		\end{equation}
		
		To do so, fix $\delta > 0$ such that $C_0(r)\delta \leq \varepsilon$.
		
		By strict equi-continuity on bounded sets of the operators $T_n$, starting with $q$, $r$ and $\delta$, we find a $\hat{q}$ and set $K := K^{\hat{q}}$. Note that we can use the same compact set $K$ for the strict continuity estimate for the limiting operator $T$ also.
		
		Using this specific compact set $K$, it follows by the triangle inequality that
		\begin{align*}
		& \left|T_n h_n(y) - f_n(y) \right| \\
		& \quad \leq \left|T_n h_n(y) - T_n h_n^{\varepsilon,K}(y) \right| + \left|T_n h_n^{\varepsilon,K}(y) - f_n^{K,\varepsilon}(y) \right| + \left|f_n^{K,\varepsilon}(y) - f_n(y) \right| \\
		& \quad \leq \left|T_n h_n(y) - T_n h_n^{\varepsilon,K}(y) \right| + \left|T_n h_n^{\varepsilon,K}(y) - f_n^{K,\varepsilon}(y) \right| + \left|f_n^{K,\varepsilon}(y) - f_n(y) \right|
		\end{align*}

		After taking $\limsup_n$ and $\sup_{y \in K_n^q}$ over the three terms separately, we estimate as follows:
		\begin{itemize}
			\item By equi-continuity of the $T_n$ and our choice of $\delta >0$ and $\hat{q}$, we find that
			\begin{equation*}
			\sup_{y \in K_n^q} \left|T_n h_n(y) - T_n h_n^{\varepsilon,K}(y) \right| \leq \varepsilon + \sup_{y \in K_n^{\hat{q}}} \left| h_n(y) -  h_n^{\varepsilon,K}(y) \right|
			\end{equation*}
			As $\LIM h_n = h$ and $\LIM h_n^{\varepsilon,K} = h^{\varepsilon,K}$, and the fact that $\sup_{y \in K} |h(y) - h^{K,\varepsilon}(y)| \leq \varepsilon$ implies that the $\limsup$ over the right-hand side is bounded above by $2 \varepsilon$.
			\item As $h^{K,\varepsilon}$ in $\cD(T)$, we have $\LIM T_n h_n^{K,\varepsilon} = T h^{\varepsilon,K} = f^{\varepsilon,K}$. We also have $\LIM f_n^{K,\varepsilon} = f^{K,\varepsilon}$ so by Lemma \ref{lemma:joint_equicontinuity_with_convergence} the middle term vanishes by choosing $n$ large.
			\item To estimate $ \limsup_n \sup_{y \in K_n^q} \left|f_n^{K,\varepsilon}(y) - f_n(y) \right|$ note that as $\LIM f_n^{K,\varepsilon} = f^{K,\varepsilon}$ and $\LIM f_n = f$, we find
			\begin{equation*}
			\limsup_n \sup_{y \in K_n^q} \left|f_n^{K,\varepsilon}(y) - f_n(y) \right| \leq \sup_{y \in K^q} \left| f^{K,\varepsilon}(y) - f(y) \right| = \sup_{y \in K^q} \left| T h^{K,\varepsilon}(y) - Th(y) \right|
			\end{equation*}
			Using the strict continuity on bounded sets of $T$, we find
			\begin{equation*}
			\limsup_n \sup_{y \in K_n^q} \left|f_n^{K,\varepsilon}(y) - f_n(y) \right| \leq \varepsilon + \sup_{y \in K} \left| h^{K,\varepsilon}(y) - h(y) \right|.
			\end{equation*}
			Thus the right-hand side is bounded by $2 \varepsilon$.
		\end{itemize}
	This establishes \eqref{eqn:bound_on_difference_closedness_set} which concludes the proof of (b). (c) follows by a direct application of Lemma \ref{definition:joint_strict_equicontinuity_boundedsets}.
	\end{proof}

\subsection{An extension of notions of convergence to a space containing additional information}\label{section:general_convergence2}

In the context of problems that involve homogenisation or slow-fast systems, it often pays of to work with multi-valued Hamiltonians whose range naturally takes values in a space of functions with a domain that is larger. This larger domain takes into account a variable that we homogenize over or the `fast' variable. We extend the setting of Section \ref{section:general_convergence1}. We will not need the extension of all results therein, but restrict to the bare essentials.

\begin{assumption} \label{assumption:abstract_spaces2}
	Consider spaces $X_n$ and $X, Y$, and two spaces $\cX,\cY$. We consider Baire measurable maps $\eta_n : X_n \rightarrow \cX$, $\widehat{\eta}_n : X_n \rightarrow \cY$ and Baire measurable injective maps $\eta : X \rightarrow \cX$, $\widehat{\eta} : Y \rightarrow \cY$.  Finally, there are continuous surjective maps $\gamma : Y \rightarrow X$ and $\widehat{\gamma} : \cY \rightarrow \cX$. The maps are such that the following diagram commutes: \\
	\begin{tikzpicture}
	\matrix (m) [matrix of math nodes,row sep=1em,column sep=4em,minimum width=2em]
	{
		{ } & \cY & Y \\
		X_n & { } & { } \\
		{ }  & \cX & X \\};
	\path[-stealth]
	(m-2-1) edge node [above] {$\widehat{\eta}_n$} (m-1-2)
	(m-2-1) edge node [below] {$\eta_n$} (m-3-2)
	(m-1-2) edge node [right] {$\widehat{\gamma}$} (m-3-2)
	(m-1-3) edge node [right] {$\gamma$} (m-3-3)
	(m-1-3) edge node [above] {$\widehat{\eta}$} (m-1-2)
	(m-3-3) edge node [above] {$\eta$} (m-3-2);
	\end{tikzpicture}
	
\end{assumption}


\begin{assumption} \label{assumption:abstract_spaces_q2}
	There is a directed set $\cQ$ (partially ordered set such that every two elements have an upper bound). For each $q \in \cQ$, we have compact sets $K_n^q \subseteq X_n$ a compact sets $K^q \subseteq X$ and $\widehat{K}^q \subseteq Y$ such that
	\begin{enumerate}[(a)]
		\item If $q_1 \leq q_2$, we have $K^{q_1} \subseteq K^{q_2}$, $\widehat{K}^{q_1} \subseteq \widehat{K}^{q_2}$ and for all $n$ we have $K_{n}^{q_1} \subseteq K_n^{q_2}$.
		\item \label{item:assumption_abstract_2_limit_compact} For all $q \in \cQ$ and each sequence $x_n \in K_n^q$, every subsequence of $x_n$ has a further subsequence $x_{n(k)}$ such that $\widehat{\eta}_{n(k)}(x_{n(k)}) \rightarrow \widehat{\eta}(y)$ in $\cY$ for some $y \in \widehat{K}^q$. 
		\item \label{item:assumption_abstract_2_exists_q} For each compact set $K \subseteq X$, there is a $q \in \cQ$ such that
		\begin{equation*}
		\eta(K) \subseteq \liminf_n \eta_n(K_n^q).
		\end{equation*}
		\item \label{item:assumption_abstract_2_gamma_mapsintoeachother} We have $\gamma(\widehat{K}^q) \subseteq K^q$.
	\end{enumerate}
\end{assumption}

Note the subtle difference with Assumption \ref{assumption:abstract_spaces_q} in the sense that here \eqref{item:assumption_abstract_2_limit_compact} is written down in terms of convergence in $\cY$, whereas \eqref{item:assumption_abstract_2_exists_q} is still written down in terms of convergence in $\cX$.

\begin{remark}
	Note that \eqref{item:assumption_abstract_2_limit_compact} and \eqref{item:assumption_abstract_2_gamma_mapsintoeachother} imply that $\eta_{n(k)}(x_{n(k)}) \rightarrow \eta(\gamma(y))$ in $\cX$ with $\gamma(y) \in K^q$. Thus, the Assumption \ref{assumption:abstract_spaces_q2} implies the conditions for $X,X_n,\cX$ for Assumption \ref{assumption:abstract_spaces_q}.

	Thus, in the context of Assumptions \ref{assumption:abstract_spaces2} and \ref{assumption:abstract_spaces_q2}, we can use all notions of the previous sections, if we talk about functions or operators on $X,\cX$ and $X_n$.
\end{remark}

\begin{example}[Reduction of the dimension] \label{example:LIM_dimension_reduction}
	Consider two spaces $X$ and $Z$ and let $Y := X \times Z$, $X_n := X \times Z$ with maps $\eta_n(x,z) = x$, $\hat{\eta}_n(x,z) = (x,z)$ and $\gamma(x,z) = x$.
	
	Assumption \ref{assumption:abstract_spaces_q2} is satisfied for example with $\cQ$ the collection of pairs of compact sets in $X$ and $Z$:
	\begin{equation*}
	\left\{(K_1, K_2) \, \middle| \, \forall \, K_1 \subseteq X, K_2 \subseteq Z \text{ compact}\right\},
	\end{equation*}
	and $K^{(K_1,K_2)}_n = K_1 \times K_2$, $K^{(K_1,K_2)} = K_1$ and $\widehat{K}^{(K_1,K_2)} = K_1 \times K_2$.
	
	We have $\LIM f_n = f$ if and only if $\sup_n \vn{f_n} < \infty$ and for all compact $K_1 \subseteq X$ and $K_2 \subseteq Z$ and sequences $(x_n,z_n) \in K_1 \times K_2$ and $x \in K_X$ such that $x_n \rightarrow x$, we have $f_n(x_n,z_n) \rightarrow f(x)$.
	
	Note that the dependence of $f_n$ on $z_n$ should vanish in the limit.
\end{example}

%
%
%
%
%
%
%

\section{Pseudo-resolvents and Hamilton-Jacobi equations} \label{section:pseudo_resolvent_and_HJ}

Having developed the topological and analytic machinery, we turn to the study of pseudo-resolvents $\{R(\lambda)\}_{\lambda > 0}$ and viscosity solutions for Hamilton-Jacobi equations $f - \lambda H f = h$. In the linear context, the relation between pseudo-resolvents and `classical' solutions to the Hamilton-Jacobi equation is well established. In particular, pseudo-resolvents have been used as tools in the approximation theory of linear semigroups and generators, see for example \cite[Section 3.4]{EN00}. We will show that the linearity of these the involved operators, however, is not essential.

In this section, we will not consider approximation problems yet, and restrict ourselves to the setting $Y = X$ and $\gamma(x) = x$.

\begin{definition}[Pseudo-resolvents] \label{defintion:pseudo_resolvent}
	Consider a space $X$ and a subset $B$ such that $C_b(X) \subseteq B \subseteq M(X)$ on which we have a family of operators $R(\lambda) : B \rightarrow B$, for $\lambda > 0$.  We say that this family is a \textit{pseudo-resolvent} if $R(\lambda) 0 = 0$ for $\lambda > 0$ and if for all $\alpha < \beta$ we have
	\begin{equation*}
	R(\beta) = R(\alpha)\left(R(\beta) - \alpha \frac{R(\beta) - \bONE}{\beta} \right).
	\end{equation*}
\end{definition}

We extend our notion of strict continuity on bounded sets to a collection of operators similar to what we did in Definition \ref{definition:joint_strict_equicontinuity_boundedsets}. Whereas in that Definition we worked with bounded functions only, we extend to unbounded functions in the next definition. In addition, we use that the constants that appear for pseudo-resolvents are typically $1$.

\begin{definition}[Local strict equicontinuity on bounded sets] \label{definition:local_strict_equicont_of_pseudoresolvent}
	Let $B$ be a collection of functions $C_b(X) \subseteq B \subseteq M(X)$. We say that the pseudo-resolvent $R(\lambda) : B \rightarrow B$ is \textit{locally strictly equicontinuous on bounded sets} if for each $\lambda_0 > 0$, each compact set $K \subseteq X$ and $r,\delta > 0$, there is a compact set $\widehat{K} = \widehat{K}(K,\delta,r,\lambda_0)$ such that
	\begin{equation*}
	\sup_{x \in K} |R(\lambda)f(x) - R(\lambda)g(x)| \leq \delta \sup_{x \in X} |f(x) - g(x)| + \sup_{x \in \hat{K}(K,\delta,r,\lambda_0)} |f(x) - g(x)|
	\end{equation*}
	for all $0 <\lambda \leq \lambda_0$, $f,g \in B$ such that $\vn{f} \vee \vn{g} \leq r$.
\end{definition}

Note that the present definition, restricted to a collection $B_0 \subseteq B \cap M_b(X)$, reduces to a definition like the one in Definition \ref{definition:joint_strict_equicontinuity_boundedsets}.

The three main results of this section are
\begin{itemize}
	\item Proposition \ref{proposition:identify_resolvent}, which shows that a pseudo-resolvent $R$ such that $R(\lambda)(f - \lambda H f) = f$ yields viscosity solutions to the Hamilton-Jacobi equation for $H$. That is, shows if the pseudo-resolvent is a left-inverse (classically) of $\bONE - \lambda H$, then it is also a right-inverse (in the viscosity sense). This implies that a pseudo-resolvent can be used to identify the resolvent of $H$.
	\item Proposition \ref{proposition:viscosity_solutions_give_pseudoresolvent} establishes the converse,  viscosity solutions to the Hamilton-Jacobi equation, if unique, can be used to construct a pseudo-resolvent.
	\item Proposition \ref{proposition:Hhat_properties} shows that the pseudo-resolvent of the previous results can be used to define a new operator, which satisfies the conditions of the Crandall-Liggett theorem.
\end{itemize}

Even though, we will always formally think of the Hamilton-Jacobi equation $f - \lambda H f = h$, we will work with two operators $H_\dagger, H_\ddagger$ instead. These operators should be interpreted as a upper and lower bound of the `true' $H$.

Thus, in all sections below we will work with $H_\dagger \subseteq LSC_l(X) \times USC_u(X)$ and $H_\ddagger \subseteq USC_u(X) \times LSC_l(X)$ and study the Hamilton Jacobi equations
	\begin{equation*}
f - \lambda H_\dagger f = h, \qquad f - \lambda H_\ddagger f = h,
\end{equation*}
with $\lambda > 0$ and $h \in M(X)$.

Before proceeding with the announced results, we note that at various points, it is of interest to know whether the domain of definition of the resolvents and operators $B$ includes $C_b(X)$. At least for resolvents, we can under some assumptions include $C_b(X)$ in the domain by continuous extension. We will therefore henceforth work with pseudo-resolvents that have $C_b(X)$ included in their domain of definition.

\begin{lemma} \label{lemma:extension_of_pseudo_resolvent}
	Suppose that $R(\lambda)$ with domains $D_\lambda$ is a pseudo-resolvent that is also locally strictly equi-continuous on bounded sets. Suppose that
	\begin{enumerate}[(a)]
		\item $R(\lambda)$ restricted to $D_\lambda \cap C_b(X)$ maps into $C_b(X)$,
		\item $D_\lambda \cap C_b(X)$ is quasi-dense in $C_b(X)$.
	\end{enumerate}
	Then the restriction of $R(\lambda)$ to $C_b(X)$ can be extended to a pseudo-resolvent that is locally strictly equi-continuous on bounded sets such that for each $\lambda$ the domain of this extension includes $C_b(X)$.
\end{lemma}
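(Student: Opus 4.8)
The plan is to treat each resolvent operator $R(\lambda)$ separately, extend it from the quasi-dense set $D_\lambda \cap C_b(X)$ to all of $C_b(X)$ by strict continuity, and afterwards check that the extended family is again a pseudo-resolvent that is locally strictly equi-continuous on bounded sets.

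First I would record the consequences of local strict equi-continuity on $D' := D_\lambda \cap C_b(X)$. Taking $g=0$ in the estimate of Definition \ref{definition:local_strict_equicont_of_pseudoresolvent}, using $R(\lambda)0=0$, evaluating at a single point (so $K=\{x\}$) and letting $\delta \downarrow 0$, yields $\vn{R(\lambda)f} \leq \vn{f}$; applied to $f-g$ the same argument gives the sup-norm contraction $\vn{R(\lambda)f - R(\lambda)g} \leq \vn{f-g}$ on $D'$. In particular images of bounded sets are bounded, which is what makes buc-limits available. Next, since $\sup_x|f-g| \leq 2r$ on $B_r$, the estimate of Definition \ref{definition:local_strict_equicont_of_pseudoresolvent} is a special case of property \eqref{item:continuity_strict_intermediate} of Proposition \ref{proposition:buc_cont_equiv_to_double_bound} with $C_0(r)=2r$, $C_1(\delta,r)=1$ and the same compact set $\widehat{K}$. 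Hence Lemma \ref{lemma:extension_operator} applies to $R(\lambda)|_{D'} : D' \to C_b(X)$ and produces an extension, still strictly continuous on bounded sets with the same $\widehat{K}$, to the quasi-closure of $D'$. By hypothesis (b) this quasi-closure is all of $C_b(X)$, so we obtain operators $\overline{R}(\lambda) : C_b(X) \to C_b(X)$.

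I would then verify the algebraic requirements of Definition \ref{defintion:pseudo_resolvent}. That $\overline{R}(\lambda)0=0$ is immediate since $0 \in D'$ and the extension agrees with $R(\lambda)$ there. For the resolvent identity, fix $\alpha < \beta$; the right-hand side is the composition of $\overline{R}(\alpha)$ with the map $w \mapsto \overline{R}(\beta)w - \tfrac{\alpha}{\beta}(\overline{R}(\beta)w - w)$, and both factors are continuous on bounded sets and carry bounded sets into bounded sets (by the contraction bound). Both sides of the identity therefore define operators continuous on bounded sets that agree on a quasi-dense subset on which the original identity holds, so by Proposition \ref{proposition:buc_cont_equiv_to_double_bound} and Lemma \ref{lemma:joint_equicontinuity_with_convergence} they agree on all of $C_b(X)$, giving the identity for $\{\overline{R}(\lambda)\}$.

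The remaining, and most delicate, step is to upgrade the estimate from the form \eqref{item:continuity_strict_intermediate} (which carries $2r\delta$ in place of $\delta\sup_x|f-g|$) to the sharp form of Definition \ref{definition:local_strict_equicont_of_pseudoresolvent} for $\overline{R}(\lambda)$. The obstruction is that $f \mapsto \sup_x|f(x)|$ is only lower semi-continuous for buc convergence, so the sharp estimate on $D'$ does not pass to the buc-limit along arbitrary approximating sequences. I would resolve this by choosing the approximations carefully: given $f,g$ with $\vn{f}\vee\vn{g}\leq r$ and $s := \sup_x|f(x)-g(x)|$, first pick $g_k \in D'$ with $g_k \to g$ buc, then approximate $f$ and truncate the approximation into the pointwise band $[g_k - s, g_k + s]$; since $|f-g|\leq s$ pointwise, the truncated sequence $f_k$ still converges buc to $f$ while satisfying $\sup_x|f_k-g_k|\leq s$. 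Passing the sharp estimate for $R(\lambda)$ through the buc-limit then gives $\sup_{y\in K}\{\overline{R}(\lambda)f(y)-\overline{R}(\lambda)g(y)\} \leq \delta s + \sup_{y\in\widehat{K}}\{f(y)-g(y)\}$, as required. The point I expect to require the most care is guaranteeing that the truncated approximants remain inside $D_\lambda$; this is where one must exploit stability of the quasi-dense domain under the lattice and translation operations used in the truncation, and I would isolate this as the structural hypothesis driving the sharp estimate.
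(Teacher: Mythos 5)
Your construction is the same as the paper's: the paper's entire proof consists of invoking Lemma \ref{lemma:extension_operator} to extend $R(\lambda)$ from $D_\lambda \cap C_b(X)$ to its quasi-closure $C_b(X)$, and of noting that the pseudo-resolvent identity passes to the extension ``by continuity''. Your first two paragraphs fill in exactly these steps --- the contraction bound obtained from $g=0$ and $R(\lambda)0=0$, the observation that the estimate of Definition \ref{definition:local_strict_equicont_of_pseudoresolvent} is an instance of property \eqref{item:continuity_strict_intermediate} of Proposition \ref{proposition:buc_cont_equiv_to_double_bound} with $C_0(r)=2r$ and $C_1=1$, and the density argument for the resolvent identity --- and they are correct.

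Where you go beyond the paper is the last paragraph, and there you have put your finger on a point the paper silently elides. Lemma \ref{lemma:extension_operator} only returns property \eqref{item:continuity_strict_intermediate}, i.e.\ the estimate with the term $\delta C_0(r)$, whereas the conclusion of the lemma, read literally against Definition \ref{definition:local_strict_equicont_of_pseudoresolvent}, asks for $\delta\sup_x|f(x)-g(x)|$; since $f\mapsto\sup_x|f(x)|$ fails to be upper semicontinuous under buc convergence, the sharp form does not pass to the limit along arbitrary approximating sequences, exactly as you diagnose. The paper's proof does not address this: it asserts that the extension satisfies property \eqref{item:continuity_strict_intermediate} ``with the same choice of constants and compact set'' and stops, and everything done with such extensions later (Lemma \ref{lemma:joint_equicontinuity_with_convergence}, Proposition \ref{proposition:extending_operators_via_LIM}, Theorem \ref{theorem:convergence_of_resolvents}) only ever uses that weaker form. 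Read in that weaker sense, your proof is already complete after the second paragraph. Your truncation repair of the sharp estimate is the natural one and the buc-convergence of the truncated approximants is fine, but, as you yourself note, it requires $D_\lambda\cap C_b(X)$ to be stable under $f\mapsto(f\vee(g_k-s))\wedge(g_k+s)$, which is not among the hypotheses; without such a lattice-stability assumption the sharp form of the estimate for the extension is established neither by you nor by the paper.
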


\begin{proof}
	By Lemma \ref{lemma:extension_operator}, the restriction of $R(\lambda)$ to $D_\lambda \cap C_b(X)$ can be extended to an operator $\hat{R}(\lambda) : C_b(X) \rightarrow C_b(X)$  that satisfies property \eqref{item:continuity_strict_intermediate} of Proposition \ref{proposition:buc_cont_equiv_to_double_bound}  (with the same choice of constants and compact set as in Definition \ref{definition:local_strict_equicont_of_pseudoresolvent}). The pseudo-resolvent property follows by continuity.
\end{proof}

\subsection{Pseudo-resolvents give solutions to the Hamilton-Jacobi equation}

\begin{proposition} \label{proposition:identify_resolvent}
	Let $B$ be a collection of functions such that $C_b(X) \subseteq B \subseteq M(X)$. Suppose that $R(\lambda) : B \rightarrow B$ is a contractive pseudo-resolvent. In addition, suppose that we have two operators $H_\dagger \subseteq  LSC_l(X) \cap B \times USC_u(X) \cap B$ and $H_\ddagger \subseteq USC_u(X) \cap B \times LSC_l(X) \cap B$.
	
	 Fix $\lambda > 0$ and $h \in M(X)$.
	\begin{enumerate}[(a)]
		\item Let $h \in B$ and let $H_\dagger$ be such that for all $(f,g) \in H_\dagger$ and $0< \varepsilon < \lambda$ we have $f \geq R(\varepsilon)(f - \varepsilon g)$. Then $(R(\lambda) h)^*$ is a viscosity sub-solution to $f - \lambda H_\dagger f = h^*$.
		\item Let $h \in B$ and let $H_\ddagger$ be such that for all $(f,g) \in H_\ddagger$ and $0 < \varepsilon < \lambda$, we have $f \leq R(\varepsilon)(f - \varepsilon g)$. Then $(R(\lambda) h)_*$ is a viscosity super-solution to $f - \lambda H_\ddagger f = h_*$.
	\end{enumerate}
\end{proposition}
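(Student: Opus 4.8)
I will prove part (a); part (b) is entirely symmetric, with all inequalities reversed, the lower regularizations $w_*,h_*$ in place of the upper ones, and the $\inf$-part of contractivity replacing the $\sup$-part. Write $w := R(\lambda)h$ and $u := w^* = (R(\lambda)h)^*$. First I record that $u \in USC_u(X)$: it is upper semi-continuous by construction, and it is bounded above because contractivity together with $R(\lambda)0 = 0$ gives $\sup_x w(x) \leq \sup_x h(x) < \infty$. Then I fix $(f,g) \in H_\dagger$ with $\sup_x\left(u(x) - f(x)\right) < \infty$ and aim to produce a sequence satisfying \eqref{eqn:subsol_optimizing_sequence} and \eqref{eqn:subsol_sequence_outcome} for the equation $f - \lambda H_\dagger f = h^*$ (so that $g$ is tested with the factor $\lambda$ coming from $\lambda H_\dagger$).

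The engine of the proof is the interplay between the resolvent identity and contractivity. Applying the pseudo-resolvent identity of Definition \ref{defintion:pseudo_resolvent} with $\alpha = \varepsilon$ and $\beta = \lambda$ for $0 < \varepsilon < \lambda$ and evaluating at $h$, I obtain
\[
w = R(\varepsilon)\!\left(w - \tfrac{\varepsilon}{\lambda}(w - h)\right).
\]
Combining this with the hypothesis $f \geq R(\varepsilon)(f - \varepsilon g)$ and the $\sup$-contractivity of $R(\varepsilon)$ applied to the pair $\phi_1 = w - \tfrac{\varepsilon}{\lambda}(w-h)$ and $\phi_2 = f - \varepsilon g$ yields the key estimate
\[
\sup_x\left(w - f\right) \;\leq\; \sup_x\left[(w-f) - \tfrac{\varepsilon}{\lambda}(w - h) + \varepsilon g\right].
\]

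Next I extract a sequence. For each $\varepsilon$ choose $x_\varepsilon$ with the bracketed quantity within $\varepsilon^2$ of its supremum; since $(w-f)(x_\varepsilon) \leq \sup_x(w-f)$, the displayed inequality forces $\tfrac{1}{\lambda}(w-h)(x_\varepsilon) - g(x_\varepsilon) \leq \varepsilon$, that is
\[
w(x_\varepsilon) - \lambda g(x_\varepsilon) - h(x_\varepsilon) \leq \lambda\varepsilon .
\]
Feeding this bound back and using that $w,h$ are bounded and $g$ is bounded above forces the correction terms to be $O(\varepsilon)$, so that $(w-f)(x_\varepsilon) \to \sup_x(w-f)$ as well. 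Letting $\varepsilon \downarrow 0$ along a sequence thus produces points realizing $\lim (w-f) = \sup_x(w-f)$ together with $\limsup\left(w - \lambda g - h\right) \leq 0$ — precisely the subsolution conditions \eqref{eqn:subsol_optimizing_sequence}, \eqref{eqn:subsol_sequence_outcome} but stated for the raw function $w$ and raw datum $h$.

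The remaining, and in my view principal, obstacle is to transfer this to the upper semi-continuous regularizations $u = w^*$ and $h^*$, since $w^* \geq w$ and $\sup_x(w^* - f)$ may strictly exceed $\sup_x(w - f)$ when $f$ is merely lower semi-continuous, so the optimizing sequence must be manufactured for the envelope $w^* - f$ (which is itself upper semi-continuous as $w^*$ is $USC$ and $-f$ is $USC$). My plan is to select the near-maximizers among points where $w$ approaches its upper regularization $w^*$, using the definition of $w^*$ as $\limsup_{y\to\cdot}w(y)$ together with the lower semi-continuity of $f$, and to pass to the limit by semi-continuity; the outcome inequality only improves under the replacement of $h$ by $h^*$, since $-h^* \leq -h$, while the upper semi-continuity of $g$ controls the $g$-term in the $\limsup$. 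Reconciling the two suprema in this way is the delicate step; the contraction estimate above is the routine part. This completes (a), and reversing every inequality and using the lower envelopes gives (b).
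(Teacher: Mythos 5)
Your opening computation --- the pseudo-resolvent identity $w = R(\varepsilon)\bigl(w - \tfrac{\varepsilon}{\lambda}(w-h)\bigr)$, the hypothesis $f \geq R(\varepsilon)(f - \varepsilon g)$, and the $\sup$-part of contractivity of $R(\varepsilon)$ --- is exactly the paper's argument, and your inline extraction of the sequence $x_\varepsilon$ correctly reproduces the content of Lemma \ref{lemma:technical_to_establish_visc_sol}(a), including the lower bound $g(x_\varepsilon) \geq \tfrac{1}{\lambda}(w-h)(x_\varepsilon) - \varepsilon$ that the paper points out is mishandled in \cite{FK06}. (A secondary caveat: your argument there assumes $w$ and $h$ are bounded, which the hypotheses $h \in B \subseteq M(X)$ do not guarantee; the paper's Lemma \ref{lemma:technical_to_establish_visc_sol} is formulated to avoid this.)

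The genuine gap is the step you yourself flag as ``the delicate step'' and leave as a plan: passing from the raw pair $(w,h)$ to the regularized pair $(w^*,h^*)$. Your sequence optimizes $w-f$, whose supremum can be \emph{strictly smaller} than $\sup_x\left(w^*(x)-f(x)\right)$ when $f$ is merely lower semi-continuous (take $w = f = \bONE_{(0,1)}$ on $[0,1]$: then $\sup(w-f)=0$ while $\sup(w^*-f)=1$). Your proposed repair --- selecting near-maximizers among points $y$ where $w(y)$ approaches $w^*(x)$ --- runs into the wrong inequality: along such points, lower semi-continuity of $f$ gives $\liminf f(y) \geq f(x)$, hence $\limsup\left(w(y)-f(y)\right) \leq w^*(x)-f(x)$, so these points need not realize $\sup(w^*-f)$ as values of $w-f$; if the two suprema differ, no sequence built from near-maximizers of $w-f$ can satisfy \eqref{eqn:subsol_optimizing_sequence} for $u = w^*$. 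The paper avoids this order of operations entirely: it first upgrades the supremum inequality
\[
\sup_x \left(w - f\right) \;\leq\; \sup_x\Bigl[\bigl(1-\tfrac{\varepsilon}{\lambda}\bigr)w + \tfrac{\varepsilon}{\lambda}h - \bigl(f - \varepsilon g\bigr)\Bigr]
\]
to one stated for $w^*$ and $h^*$ (using that the coefficients of $w$ and $h$ are positive and that $f - \varepsilon g \in LSC_l(X)$), and only then applies Lemma \ref{lemma:technical_to_establish_visc_sol}(a) to the pair $F = w^*-f$, $G = \tfrac{w^*-h^*}{\lambda}-g$, which produces the optimizing sequence for the regularized function in one stroke. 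Until you either prove the transfer or restructure the argument in this order, part (a) is not established.
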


To establish this result, we use an auxiliary lemma.

\begin{lemma}[Lemma 7.8 in \cite{FK06}] \label{lemma:technical_to_establish_visc_sol}
	Let $X$ be a some space and let $f,g : X \rightarrow [-\infty,\infty]$ be two functions.
	\begin{enumerate}[(a)]
		\item Suppose there is some $\varepsilon_0 > 0$ such that $f - \varepsilon g \in M(X)$ and $\sup_x f(x) \leq \sup_x f(x) - \varepsilon g(x) < \infty$ for all $0 < \varepsilon < \varepsilon_0$, then there is a sequence $x_n$ in $X$ such that $\lim_n f(x_n) = \sup_x f(x)$ and $\limsup_n g(x_n) \leq 0$.
		\item Suppose there is some $\varepsilon_0 > 0$ such that $f - \varepsilon g \in M(X)$ and $\inf_x f(x) \geq \inf_x f(x) - \varepsilon g(x) > -\infty$ for all $0 < \varepsilon < \varepsilon_0$, then there is a sequence $x_n$ in $X$ such that $\lim_n f(x_n) = \inf_x f(x)$ and $\liminf_n g(x_n) \geq 0$.
	\end{enumerate}
\end{lemma}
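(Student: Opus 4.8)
The plan is to prove part (a) directly and then deduce (b) by applying (a) to the pair $(-f,-g)$: since $\sup_x(-f) = -\inf_x f$ and $\sup_x(-f+\varepsilon g) = -\inf_x(f-\varepsilon g)$, the hypotheses of (b) translate verbatim into those of (a), and the resulting sequence satisfies $f(x_n)\to\inf_x f$ together with $\liminf_n g(x_n)\ge 0$. So I concentrate on (a). Abbreviate $c := \sup_x f(x)$, which the hypothesis forces to be finite (the degenerate case $c=-\infty$, where $f\equiv-\infty$, is vacuous and can be dispatched separately), and for $\varepsilon\ge 0$ set $S(\varepsilon) := \sup_x\bigl(f(x) - \varepsilon g(x)\bigr)$, so that $S(0)=c$ and, by assumption, $c \le S(\varepsilon) < \infty$ for all $0<\varepsilon<\varepsilon_0$.

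The key structural observation is that $S$ is convex on $[0,\varepsilon_0)$, being a pointwise supremum of the affine-in-$\varepsilon$ maps $\varepsilon\mapsto f(x)-\varepsilon g(x)$. Convexity together with finiteness at the endpoint $0$ gives, for $0<\varepsilon<\varepsilon'<\varepsilon_0$, the inequality $S(\varepsilon)\le (1-\tfrac{\varepsilon}{\varepsilon'})S(0) + \tfrac{\varepsilon}{\varepsilon'}S(\varepsilon')$, whence $\limsup_{\varepsilon\downarrow 0}S(\varepsilon)\le S(0)=c$; combined with the standing lower bound $S(\varepsilon)\ge c$ this yields $\lim_{\varepsilon\downarrow 0}S(\varepsilon)=c$. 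This is precisely the step where the finiteness hypothesis is indispensable: without it the supremum could remain strictly above $c$, or be infinite, as $\varepsilon\downarrow 0$, and the argument below would collapse.

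Now fix a sequence $\varepsilon_n\downarrow 0$ (eventually $2\varepsilon_n<\varepsilon_0$) and, using finiteness of $S(\varepsilon_n)$, choose $x_n$ with $f(x_n)-\varepsilon_n g(x_n)\ge S(\varepsilon_n)-\varepsilon_n^2$. The bound $\limsup_n g(x_n)\le 0$ is immediate: since $f(x_n)\le c\le S(\varepsilon_n)$, the inequality $f(x_n)-\varepsilon_n g(x_n)\ge c-\varepsilon_n^2$ forces $\varepsilon_n g(x_n)\le f(x_n)-c+\varepsilon_n^2\le \varepsilon_n^2$, i.e.\ $g(x_n)\le\varepsilon_n$. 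The remaining and genuinely substantial difficulty is the lower bound $f(x_n)\to c$, because a priori $g(x_n)$ could tend to $-\infty$ and spoil the decomposition $f(x_n) = (f(x_n)-\varepsilon_n g(x_n)) + \varepsilon_n g(x_n)$. To control this I compare two scales: evaluating the defining supremum at $2\varepsilon_n$ gives $f(x_n)-2\varepsilon_n g(x_n)\le S(2\varepsilon_n)$, and subtracting this from $f(x_n)-\varepsilon_n g(x_n)\ge S(\varepsilon_n)-\varepsilon_n^2$ produces $\varepsilon_n g(x_n)\ge S(\varepsilon_n)-S(2\varepsilon_n)-\varepsilon_n^2$. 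Substituting back yields $f(x_n)\ge 2S(\varepsilon_n)-S(2\varepsilon_n)-2\varepsilon_n^2$, and since both $S(\varepsilon_n)$ and $S(2\varepsilon_n)$ converge to $c$ the right-hand side tends to $c$; together with $f(x_n)\le c$ this gives $f(x_n)\to c$, completing (a).

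I expect the two-scale comparison to be the crux of the argument: it is what converts the soft convergence $S(\varepsilon)\to c$ into genuine control of $f$ along the chosen near-maximizers, while sidestepping any appeal to attainment of the suprema or to differentiability of $S$ (a Danskin-type envelope computation would otherwise be needed, and would not be available at the level of generality of arbitrary $f,g\in M(X)$). Everything else is bookkeeping with the two standing inequalities and the convexity of $S$.
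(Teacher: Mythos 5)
Your proof is correct. Up to the crux it coincides with the paper's: the same near-maximizers $x_n$ at scale $\varepsilon_n$ with additive slack $\varepsilon_n^2$, and the same one-line deduction $g(x_n)\le\varepsilon_n$, hence $\limsup_n g(x_n)\le 0$, from $f(x_n)\le\sup_x f\le S(\varepsilon_n)$. Where you genuinely diverge is in forcing $f(x_n)\to\sup_x f$, i.e.\ in keeping $\varepsilon_n g(x_n)$ from drifting to $-\infty$. The paper tests the near-maximizer against the supremum at a \emph{fixed} auxiliary scale $\varepsilon\in(0,\varepsilon_0)$, obtaining
\begin{equation*}
\sup_x \left(f(x)-\varepsilon g(x)\right) \;\ge\; \Bigl(\sup_x f(x)\Bigr)-(\varepsilon-\varepsilon_n)\,g(x_n)-\varepsilon_n^2,
\end{equation*}
and rules out a subsequence with $g(x_{n(k)})\to-\infty$ by contradiction; $\varepsilon_n g(x_n)\to 0$ then follows. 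You instead test against the moving scale $2\varepsilon_n$, which yields the quantitative bound $\varepsilon_n g(x_n)\ge S(\varepsilon_n)-S(2\varepsilon_n)-\varepsilon_n^2$, and you then need the extra input $S(\varepsilon)\to S(0)$ as $\varepsilon\downarrow 0$, which you supply via convexity of $S$. Both routes are valid, and both repair exactly the gap in Feng--Kurtz's Lemma 7.8 that the paper's remark identifies (the unjustified claim $\inf_x g>-\infty$). The paper's version is leaner, needing finiteness of $S$ at a single fixed $\varepsilon$ and no convexity; yours is more quantitative ($f(x_n)\ge 2S(\varepsilon_n)-S(2\varepsilon_n)-2\varepsilon_n^2$) and avoids the subsequence/contradiction step. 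One small correction: the degenerate case $\sup_x f=-\infty$ is not vacuous --- on a one-point space with $f\equiv-\infty$ and $g\equiv 1$ the hypotheses hold while the conclusion fails --- so, exactly like the paper's own proof, your argument implicitly assumes $\sup_x f>-\infty$; this is harmless in all of the lemma's applications. The reduction of (b) to (a) via $(-f,-g)$ is fine.
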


\begin{remark}
	The proof in Lemma 7.8 of \cite{FK06} is not correct. In establishing $\lim_n f(x_n) = \sup_x f(x)$ for (a), it is used that $\inf_x g(x) > -\infty$. This claim, however, is not true in general. Consider the following example. Let $X = [0,1]$ and set
	\begin{equation*}
	f(x) = 
	\begin{cases}
	\log x & \text{if } x \neq 0, \\
	-\infty & \text{if } x = 0,
	\end{cases}
	\qquad 
	g(x) =
	\begin{cases}
	\log x & \text{if } x \neq 0, \\
	0 & \text{if } x = 0,
	\end{cases}
	\end{equation*}
	For all $0 < \varepsilon < 1$ we have $\sup_x f(x) = \sup_x f(x) - \varepsilon g(x) = 0$. However, it does not hold that $\inf_x g(x) > - \infty$.
\end{remark}

\begin{proof}
	We only prove (a). Let $0 <\varepsilon_n < \varepsilon_0$ and $\varepsilon_n \rightarrow 0$. For each $n$ pick $x_n$ such that
	\begin{equation} \label{eqn:proof_existence_sequences_basic_inequality}
	f(x_n) \leq \sup_x f(x) \leq \sup_x f(x) - \varepsilon_n g(x) \leq f(x_n) - \varepsilon_n g(x_n) + \varepsilon_n^2.
	\end{equation}
	Combining the outer two-terms leads to $\limsup_n g(x_n) \leq \lim_n \varepsilon_n = 0$.
	
	\smallskip
	
	We now establish that $\lim_n f(x_n) = \sup_x f(x)$. To do so, we first prove $\liminf_n g(x_n) > - \infty$. Fix $\varepsilon \in (0,\varepsilon_0)$. For each $n$, using \eqref{eqn:proof_existence_sequences_basic_inequality}, we have the following chain of inequalities:
	\begin{align*}
	\sup_x f(x) - \varepsilon g(x) & \geq f(x_n) - \varepsilon g(x_n) \\
	& = f(x_n) - \varepsilon_n g(x_n) + \varepsilon_n^2 - (\varepsilon - \varepsilon_n)g(x_n) - \varepsilon^2_n \\
	& \geq \left(\sup_x f(x)\right) - (\varepsilon - \varepsilon_n) g(x_n) - \varepsilon_n^2.
	\end{align*}
	Suppose there is a subsequence $x_{n(k)}$ such that $\lim_k g(x_{n(k)}) = - \infty$, then clearly the right-hand side would diverge to $\infty$. This contradicts the boundedness of the left-hand side. 
	
	From the second and final term in \eqref{eqn:proof_existence_sequences_basic_inequality}, we obtain
	\begin{equation*}
	\left(\sup_x f(x)\right) + \varepsilon_n g(x_n) - \varepsilon_n^2 \leq f(x_n).
	\end{equation*}
	Taking a $\liminf$ on both sides, using that $\liminf_n g(x_n) > -\infty$, we find $\liminf_n f(x_n) \geq \sup_x f(x)$, which establishes $\lim_n f(x_n) = \sup_x f(x)$.
\end{proof}

\begin{proof}[Proof of Proposition \ref{proposition:identify_resolvent}]
	We prove (a). Fix $\lambda > 0$ and $h \in USC_u(X) \cap B$. We prove that $(R(\lambda)h)^*$ is a viscosity solution to $f - \lambda H_\dagger f = h^*$. Fix $(f,g) \in H_\dagger$.

	We use the pseudo-resolvent property of $R$ with $0<\varepsilon < \lambda$, to re-express $R(\lambda)h(x)$. For $f(x)$ we use the assumption in (a). We obtain:
	\begin{align*}
	& \sup_x R(\lambda) h(x) - f(x) \\
	& \leq \sup_x R(\varepsilon) \left(R(\lambda) h(x) - \varepsilon \frac{R(\lambda) h(x) - h(x)}{\lambda}\right) - R(\varepsilon)\left(f(x) - \varepsilon g(x)  \right) \\
	& \leq  \sup_{x} R(\lambda) h(x) - \varepsilon \frac{R(\lambda) h(x) - h(x)}{\lambda} - (f(x) - \varepsilon g(x)), 
	\end{align*}
	where we have used the contractivity of $R(\varepsilon)$ to obtain the final inequality. Note that we use that the domain and range of $H_\dagger$ are contained in $B$ to be able to write down $R(\lambda)$ and $R(\varepsilon)$ applied to $h$ and $f - \varepsilon g$. Next, we take the upper semi-continuous regularization. As $\varepsilon < \lambda$ and $f, f  - \varepsilon g \in LSC_l(X)$, we obtain 
	\begin{equation*}
	\sup_x \left(R(\lambda) h\right)^*(x) - f(x)  \leq  \sup_{x} \left(R(\lambda) h\right)^*(x) - \varepsilon \frac{\left(R(\lambda) h\right)^*(x) - h^*(x)}{\lambda} - (f(x) - \varepsilon g(x)) 
	\end{equation*}
	This establishes (a) of Lemma \ref{lemma:technical_to_establish_visc_sol} which then yields that $(R(\lambda)h)^*$ is a viscosity subsolution to $f - \lambda H_\dagger f = h^*$.
\end{proof}

%
%
%
%

\subsection{Solutions to the Hamilton-Jacobi equation give a pseudo-resolvent}

In this section we prove the converse statement, namely that viscosity solutions give rise to a pseudo-resolvent. We start with an auxiliary lemma that will be key to recovering the pseudo-resolvent property for solutions of viscosity solutions.

\begin{lemma} \label{lemma:viscosity_solutions_give_pseudoresolvent}
	Let $H_\dagger \subseteq LSC_l(X) \times USC_u(X)$ and $H_\ddagger \subseteq USC_u(X) \times LSC_l(X)$. Fix $\lambda > \varepsilon > 0$ and $h \in M(X)$. Then 
	\begin{enumerate}[(a)]
		\item A subsolution $u$ to $f - \lambda H_\dagger f = h$ is a subsolution to $f - \varepsilon H_\dagger f = u - \varepsilon \tfrac{u - h}{\lambda}$.
		\item A supersolution $v$ to $f - \lambda H_\dagger f = h$ is a supersolution to $f - \varepsilon H_\dagger f = v - \varepsilon \tfrac{v - h}{\lambda}$.
	\end{enumerate}
\end{lemma}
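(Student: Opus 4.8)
The plan is to exploit the fact that both equations $f - \lambda H_\dagger f = h$ and $f - \varepsilon H_\dagger f = u - \varepsilon\tfrac{u-h}{\lambda}$ are governed by operators that share the same test pairs. By the definition of $c\cdot A$, a pair in $\lambda H_\dagger$ is $(f,\lambda g)$ and a pair in $\varepsilon H_\dagger$ is $(f,\varepsilon g)$, both indexed by the same $(f,g) \in H_\dagger$. Since the optimizing condition \eqref{eqn:subsol_optimizing_sequence} depends only on the subsolution $u$ and on the first coordinate $f$ — and neither on the right-hand side nor on the constant in front of $H_\dagger$ — any optimizing sequence produced for the $\lambda$-equation can be reused verbatim for the $\varepsilon$-equation. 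The entire content of the lemma therefore reduces to checking the outcome inequality \eqref{eqn:subsol_sequence_outcome} along that fixed sequence.

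For part (a), I would fix $(f,g) \in H_\dagger$ with $\sup_x \{u(x) - f(x)\} < \infty$, which are the only pairs requiring a test. Since $u$ is a subsolution to $f - \lambda H_\dagger f = h$, the definition supplies a sequence $y_n \in X$ (recall $Y = X$ and $\gamma = \mathrm{id}$ in this section) with $\lim_n u(y_n) - f(y_n) = \sup_x\{u(x) - f(x)\}$ and $\limsup_n u(y_n) - \lambda g(y_n) - h(y_n) \le 0$. The first statement is exactly the optimizing condition demanded for the $\varepsilon$-equation, so only the outcome condition remains. Writing $\tilde h := u - \varepsilon\tfrac{u-h}{\lambda}$, the crux is the pointwise identity
\begin{equation*}
u(y) - \varepsilon g(y) - \tilde h(y) = \frac{\varepsilon}{\lambda}\bigl(u(y) - \lambda g(y) - h(y)\bigr),
\end{equation*}
valid wherever the extended-real arithmetic is unambiguous. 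Taking $\limsup_n$ and using $\varepsilon/\lambda > 0$ converts the $\lambda$-outcome bound into
\begin{equation*}
\limsup_n u(y_n) - \varepsilon g(y_n) - \tilde h(y_n) = \frac{\varepsilon}{\lambda}\limsup_n \bigl(u(y_n) - \lambda g(y_n) - h(y_n)\bigr) \le 0,
\end{equation*}
which is precisely \eqref{eqn:subsol_sequence_outcome} for the $\varepsilon$-equation. This proves (a).

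For part (b) I would run the mirror-image argument for a supersolution $v$ tested against pairs $(f,g) \in H_\ddagger$ with $\inf_x\{v(x) - f(x)\} > -\infty$: take the minimizing sequence guaranteed by the $\lambda$-supersolution property together with $\liminf_n v(y_n) - \lambda g(y_n) - h(y_n) \ge 0$, and apply the same algebraic identity with $v$ replacing $u$. Because $\varepsilon/\lambda > 0$ preserves the direction of the inequality, the bound $\ge 0$ is inherited, yielding \eqref{eqn:supersol_sequence_outcome} for $f - \varepsilon H_\ddagger f = v - \varepsilon\tfrac{v-h}{\lambda}$.

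The computation is a one-line identity, so there is no real analytic difficulty; the only delicate points are bookkeeping. First, one must read $f - \lambda H_\dagger f = h$ through the scaling operator $c\cdot A$ so that the test pairs of the $\lambda$- and $\varepsilon$-equations are matched correctly and the same sequence is admissible for both. Second, one should check that $\tilde h$ is a genuine element of $M(X)$ and that the displayed identity holds in $[-\infty,\infty]$; this is immediate once one observes that $u \in USC_u(X)$ is bounded above (hence never $+\infty$) and that, along the optimizing sequence attached to a pair with finite $\sup_x\{u-f\}$, the values $f(y_n)$ (with $f \in LSC_l(X)$ bounded below) and hence $u(y_n)$ are eventually finite, so every subtraction in sight is well-defined.
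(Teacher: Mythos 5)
Your proof is correct and follows essentially the same route as the paper: take the optimizing sequence supplied by the $\lambda$-subsolution property, observe that the optimizing condition is independent of the scaling and of the right-hand side, and use the pointwise identity $u(y)-\varepsilon g(y)-\bigl(u(y)-\varepsilon\tfrac{u(y)-h(y)}{\lambda}\bigr)=\tfrac{\varepsilon}{\lambda}\bigl(u(y)-\lambda g(y)-h(y)\bigr)$ to transfer the outcome inequality. Your additional remarks on the extended-real bookkeeping and on reading part (b) through $H_\ddagger$ are sensible but do not change the argument, which matches the paper's one-line proof.
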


\begin{proof}
	We prove (a) only. Let $(f,g) \in H_\dagger$ and let $x_n$ be sequence such that $\lim_n u(x_n) - f(x_n) = \sup_x u(x) - f(x)$ and $\limsup_n u(x_n) - \lambda g(x_n) -h(x_n) \leq 0$. Because
	\begin{equation*}
	u(x_n) - \varepsilon g(x_n) - \left(u(x_n) - \varepsilon \frac{u(x_n) - h(x_n)}{\lambda}\right) = \frac{\varepsilon}{\lambda} \left( u(x_n) - \lambda g(x_n) -h(x_n) \right),
	\end{equation*}
	it follows that $u$ is a viscosity subsolution to $f - \varepsilon H_\dagger f = u - \varepsilon \tfrac{u - h}{\lambda}$.
\end{proof}

	\begin{proposition} \label{proposition:viscosity_solutions_give_pseudoresolvent}
		Let $H_\dagger \subseteq LSC_l(X) \times USC_u(X)$ and $H_\ddagger \subseteq USC_u(X) \times LSC_l(X)$. Suppose that for each $\lambda  > 0$ and $h \in C_b(X)$ the comparison principle holds for 
		\begin{equation} \label{eqn:pair_of_HJ}
		f - \lambda H_\dagger f = h, \qquad f - \lambda H_\ddagger f = h.
		\end{equation}
		Suppose that there exists a viscosity solution to \eqref{eqn:pair_of_HJ} for all $\lambda > 0$ and $h \in C_b(X)$. Denote this unique solution by $R(\lambda)h$. Then $R(\lambda)$ forms a contractive pseudo-resolvent on $C_b(X)$. 
	\end{proposition}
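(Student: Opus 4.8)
The plan is to verify directly the three defining properties of a contractive pseudo-resolvent for the operator $R(\lambda)$ defined via the unique viscosity solution to the pair \eqref{eqn:pair_of_HJ}. First, $R(\lambda)$ is well-defined and single-valued: existence is assumed, and uniqueness follows from the comparison principle, since if $u_1, u_2$ are two solutions then each is simultaneously a subsolution and a supersolution, so applying \eqref{eqn:comparison_estimate} with $h_1 = h_2 = h$ in both directions gives $\sup_x(u_1 - u_2) \leq 0$ and $\sup_x(u_2 - u_1) \leq 0$. Second, $R(\lambda)0 = 0$ follows because the constant function $0$ is a classical (hence viscosity) solution to $f - \lambda H_\dagger f = 0$ and $f - \lambda H_\ddagger f = 0$ whenever $0 \in \cD(H_\dagger) \cap \cD(H_\ddagger)$ with the right image; more robustly, one checks that $0$ is a sub- and supersolution and invokes uniqueness. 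Third, contractivity: let $u_i = R(\lambda)h_i$. Since $u_1$ is a subsolution for $h_1$ and $u_2$ a supersolution for $h_2$, the comparison principle gives $\sup_x\{u_1(x) - u_2(x)\} \leq \sup_x\{h_1(x) - h_2(x)\}$, which is the first contractivity inequality; the second follows by interchanging the roles and using the supersolution/subsolution comparison in the reverse direction.

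The substantive part is the pseudo-resolvent identity
\begin{equation*}
R(\beta) = R(\alpha)\left(R(\beta) - \alpha \tfrac{R(\beta) - \bONE}{\beta}\right), \qquad \alpha < \beta.
\end{equation*}
Here I would fix $h \in C_b(X)$, write $u := R(\beta)h$, and set $\tilde h := u - \alpha\tfrac{u - h}{\beta}$. The goal is to show that $u$ is itself the unique viscosity solution to the pair of equations at resolvent parameter $\alpha$ with right-hand side $\tilde h$, i.e. $u = R(\alpha)\tilde h$, which is exactly the claimed identity once one checks $\tilde h \in C_b(X)$ (immediate, as it is an affine combination of $u, h \in C_b(X)$). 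This is precisely where Lemma \ref{lemma:viscosity_solutions_give_pseudoresolvent} does the work: part (a) states that a subsolution $u$ to $f - \beta H_\dagger f = h$ is a subsolution to $f - \alpha H_\dagger f = u - \alpha\tfrac{u-h}{\beta}$, and the analogous supersolution statement (the $H_\ddagger$ version) handles the supersolution side. Thus $u = R(\beta)h$ is both a subsolution and a supersolution to the pair at parameter $\alpha$ with data $\tilde h$, and by uniqueness at parameter $\alpha$ we conclude $u = R(\alpha)\tilde h$.

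The main obstacle I anticipate is matching the bookkeeping of Lemma \ref{lemma:viscosity_solutions_give_pseudoresolvent} to the $H_\dagger$/$H_\ddagger$ split: the lemma as stated is phrased for a single operator $H_\dagger$, so I must confirm that its supersolution analogue holds verbatim for $H_\ddagger$ (the proof is symmetric, replacing the optimizing-sequence argument with its $\liminf$ counterpart). A secondary point requiring care is that the comparison principle is only assumed for $h \in C_b(X)$, so I must ensure the auxiliary right-hand side $\tilde h$ lies in $C_b(X)$ — which it does — before invoking uniqueness at parameter $\alpha$. Finally, the identity involves $R(\beta)$ on the right-hand side inside the argument of $R(\alpha)$, so I should present the computation with $u = R(\beta)h$ held fixed throughout, treating $\tilde h$ as a genuine $C_b(X)$ input to $R(\alpha)$, rather than attempting to manipulate the operators algebraically as in the linear theory.
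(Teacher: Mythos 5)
Your proposal is correct and follows essentially the same route as the paper: contractivity from the comparison principle, and the resolvent identity by applying Lemma \ref{lemma:viscosity_solutions_give_pseudoresolvent} to $u = R(\beta)h$ and invoking uniqueness at parameter $\alpha$ for the right-hand side $\tilde h = u - \alpha\tfrac{u-h}{\beta} \in C_b(X)$. Your remark that part (b) of that lemma should be read with $H_\ddagger$ in place of $H_\dagger$ is apt, and you are in fact more explicit than the paper about $R(\lambda)0 = 0$, which the paper's proof passes over silently.
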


\begin{remark}
The proposition has the drawback that we have to assume the comparison principle for all $h \in C_b(X)$ and $\lambda > 0$. In practice, often one only has the comparison principle for a dense set of functions. The result does show that we should expect the pseudo-resolvent property to hold. Thus, if one has a method to produce viscosity solutions $R(\lambda)h$ via some other means, like a limiting procedure, or an explicit formula, one can aim to prove the pseudo-resolvent property directly.
\end{remark}

\begin{proof}[Proof of Proposition \ref{proposition:viscosity_solutions_give_pseudoresolvent}]
	Let $0 < \varepsilon < \lambda$ and $h \in C_b(X)$. Let $R(\lambda)h$ be the unique viscosity solution to $f - \lambda H_\dagger f = h$ and $f - \lambda H_\ddagger f = h$. Note that by the comparison principle $R(\lambda)h \in C_b(X)$. 
	
	By the comparison principle $R(\lambda)$ is contractive. We next establish that $R(\lambda)$ is a pseudo-resolvent. By Lemma \ref{lemma:viscosity_solutions_give_pseudoresolvent}, we find for $h \in D$ that $R(\lambda) h$ is a viscosity solution to 
	\begin{equation*}
	f - \varepsilon H_\dagger f = R(\lambda)h - \varepsilon\frac{R(\lambda)h - h}{\lambda_0}, \qquad f - \varepsilon H_\ddagger f = R(\lambda)h - \varepsilon\frac{R(\lambda)h - h}{\lambda}.
	\end{equation*}
	As $h$ and $R(\lambda) h \in C_b(X)$, we find by the comparison principle for \eqref{eqn:pair_of_HJ} with $\varepsilon$ instead of $\lambda$ that
	\begin{equation*}
	R(\lambda) h = R(\varepsilon) \left(R(\lambda) h - \varepsilon \frac{R(\lambda) h - h}{\lambda}\right).
	\end{equation*}
\end{proof}

\subsection{Defining a Hamiltonian using a pseudo-resolvent}

We proceed by showing that a pseudo-resolvent can be used to define a Hamiltonian, so that the pseudo-resolvent gives both classical and viscosity solutions to the associated Hamilton-Jacobi equation. Thus, this newly defined operator satisfies the conditions for the Crandall-Liggett theorem, and can be used for approximation arguments.

\begin{proposition} \label{proposition:Hhat_properties}
	Let $R(\lambda) : C_b(X) \rightarrow C_b(X)$ be a contractive pseudo-resolvent. Define the operator
	\begin{equation*}
	\widehat{H} = \left\{\left(R(\lambda)h , \frac{R(\lambda)h - h}{\lambda}\right) \, \middle| \, \lambda > 0, h \in C_b(E) \right\}
	\end{equation*}
	For all $h \in C_b(E)$ and $\lambda > 0$:
	\begin{enumerate}[(a)]
		\item For all $\lambda > 0$ and $h \in C_b(X)$ the comparison principle holds for $f - \lambda \widehat{H}f = h$, and $R(\lambda)h$ is the unique viscosity and classical solution.
		\item $\widehat{H}$ is dissipative and satisfies the range condition.
	\end{enumerate}
\end{proposition}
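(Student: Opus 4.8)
The plan is to reduce everything to a single identity: for every $(f,g) \in \widehat H$ and every $\varepsilon > 0$,
\[
R(\varepsilon)(f - \varepsilon g) = f.
\]
Writing $(f,g) = \big(R(\mu)h, \tfrac{R(\mu)h - h}{\mu}\big)$ for some $\mu > 0$ and $h \in C_b(X)$, this asserts that $R(\varepsilon)$ inverts $\bONE - \varepsilon \widehat H$ on the range of $R(\mu)$, whether or not $\varepsilon = \mu$. I expect this identity to be the main obstacle; once it is in hand, both (a) and (b) follow quickly.

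To prove the identity I would split on the size of $\varepsilon$ relative to $\mu$. When $\varepsilon = \mu$ it is trivial, since $f - \mu g = h$ and hence $R(\mu)(f - \mu g) = R(\mu) h = f$. When $\varepsilon < \mu$, the pseudo-resolvent identity of Definition \ref{defintion:pseudo_resolvent} with $\alpha = \varepsilon < \beta = \mu$, evaluated at $h$, gives exactly $R(\varepsilon)\big(R(\mu)h - \varepsilon \tfrac{R(\mu)h - h}{\mu}\big) = R(\mu)h$, i.e. $R(\varepsilon)(f - \varepsilon g) = f$. The delicate case is $\varepsilon > \mu$. Here I would set $w := f - \varepsilon g$ and apply the pseudo-resolvent identity with $\alpha = \mu < \beta = \varepsilon$ to $w$; this exhibits $z := R(\varepsilon)w$ as a fixed point of the map $\Phi(z) := R(\mu)\big((1 - \tfrac{\mu}{\varepsilon}) z + \tfrac{\mu}{\varepsilon} w\big)$. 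A short computation shows that $f$ is also a fixed point of $\Phi$, because the affine combination collapses to $h$. Since $0 < 1 - \tfrac{\mu}{\varepsilon} < 1$ and contractivity of $R(\mu)$ yields the norm bound $\vn{R(\mu)a - R(\mu)b} \le \vn{a - b}$, the map $\Phi$ is a strict contraction on the Banach space $(C_b(X), \vn{\cdot})$, so Banach's fixed point theorem forces $R(\varepsilon)w = f$.

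With the identity established, part (a) is short. The function $R(\lambda)h$ is a classical solution of $f - \lambda \widehat H f = h$ by the very definition of $\widehat H$, and conversely any classical solution $w$, say with $(w,g_w) \in \widehat H$ and $w - \lambda g_w = h$, satisfies $w = R(\lambda)(w - \lambda g_w) = R(\lambda)h$ by the identity with $\varepsilon = \lambda$, giving uniqueness of classical solutions. That $R(\lambda)h$ is also a viscosity solution follows from Proposition \ref{proposition:identify_resolvent} applied with $H_\dagger = H_\ddagger = \widehat H$ (note $C_b(X) \subseteq LSC_l(X) \cap USC_u(X)$), whose hypotheses $f \ge R(\varepsilon)(f - \varepsilon g)$ and $f \le R(\varepsilon)(f - \varepsilon g)$ for $0 < \varepsilon < \lambda$ are precisely the identity, now read as equalities. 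For the comparison principle I would test a subsolution $u$ of $f - \lambda \widehat H f = h_1$ against the pair $\big(R(\lambda)h_1, \tfrac{R(\lambda)h_1 - h_1}{\lambda}\big) \in \widehat H$: since $\lambda g = R(\lambda)h_1 - h_1$, the defect $u - \lambda g - h_1$ equals $u - R(\lambda)h_1$ along the optimizing sequence, so being a subsolution forces $u \le R(\lambda)h_1$. Symmetrically any supersolution $v$ of the equation with $h_2$ satisfies $v \ge R(\lambda)h_2$, whence $\sup_x(u - v) \le \sup_x(R(\lambda)h_1 - R(\lambda)h_2) \le \sup_x(h_1 - h_2)$ by contractivity; uniqueness of the viscosity solution is the special case $h_1 = h_2$.

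Finally part (b). Dissipativity is immediate from the identity with $\varepsilon = \lambda$: given $(f_i, g_i) \in \widehat H$ we have $f_i = R(\lambda)(f_i - \lambda g_i)$, so contractivity gives $\vn{f_1 - f_2} \le \vn{(f_1 - \lambda g_1) - (f_2 - \lambda g_2)}$, which is exactly dissipativity. For the range condition, note $\cD(\widehat H) = \{R(\mu)h : \mu > 0, h \in C_b(X)\} \subseteq C_b(X)$, and for any $h \in C_b(X)$ the element $R(\lambda)h \in \cD(\widehat H)$ satisfies $(\bONE - \lambda \widehat H)R(\lambda)h = h$, so $C_b(X) \subseteq \cR(\bONE - \lambda \widehat H)$. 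Since $C_b(X)$ is closed in the uniform norm, the uniform closure of $\cD(\widehat H)$ is contained in $C_b(X) \subseteq \cR(\bONE - \lambda \widehat H)$, which is the range condition.
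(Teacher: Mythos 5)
Your proposal is correct, but it is organized around a stronger structural fact than the paper ever establishes, namely the exact two-sided identity $R(\varepsilon)(f-\varepsilon g)=f$ for \emph{every} $(f,g)\in\widehat{H}$ and \emph{every} $\varepsilon>0$, with the delicate case $\varepsilon>\mu$ settled by exhibiting both $f$ and $R(\varepsilon)(f-\varepsilon g)$ as fixed points of the strict contraction $\Phi(z)=R(\mu)\bigl((1-\tfrac{\mu}{\varepsilon})z+\tfrac{\mu}{\varepsilon}w\bigr)$; that fixed-point step is sound (contractivity of $R(\mu)$ gives the sup-norm Lipschitz bound, and the affine combination at $z=f$ collapses to $h$), and it is not in the paper. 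The paper deliberately avoids this case: it only ever applies the pseudo-resolvent identity with $0<\varepsilon<\lambda\wedge\mu$, converts it via contractivity into the inequality needed for Lemma \ref{lemma:technical_to_establish_visc_sol}, and then routes everything through the viscosity machinery — dissipativity is extracted from the viscosity sub- and supersolution properties along optimizing sequences, and uniqueness of classical solutions from Lemma \ref{lemma:identification:viscosity_subsol}. Your identity buys you a cleaner part (b): dissipativity becomes the one-line estimate $\vn{f_1-f_2}=\vn{R(\lambda)(f_1-\lambda g_1)-R(\lambda)(f_2-\lambda g_2)}\leq\vn{(f_1-\lambda g_1)-(f_2-\lambda g_2)}$, and classical uniqueness is immediate, at the cost of the extra fixed-point lemma. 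It also genuinely matters for your appeal to Proposition \ref{proposition:identify_resolvent}, whose hypothesis as stated requires the inequality for all $0<\varepsilon<\lambda$, hence for $\varepsilon\in(\mu,\lambda)$ when $\mu<\lambda$. The comparison principle and the viscosity solution property are handled essentially as in the paper (testing against the canonical element of $\widehat{H}$ and using contractivity of $R(\lambda)$), so the two proofs agree there.
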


The proposition is mainly useful in combination with Proposition \ref{proposition:viscosity_solutions_give_pseudoresolvent}. Namely, viscosity solutions for \eqref{eqn:pair_of_HJ} can be used to define a contractive pseudo-resolvent. Which by this proposition can be used to define a new operator that satisfies the conditions for the Crandall-Liggett result. Alternatively, one constructs a contractive pseudo-resolvent via an approximation argument as we will do below in Section \ref{section:convergence_of_operators_and_inverses}.

\smallskip

We start by establishing a natural property for sub- and supersolutions of Hamilton-Jacobi equations. We mention it separately for later use.

\begin{lemma} \label{lemma:identification:viscosity_subsol}
	Let $H_\dagger \subseteq LSC_l(X) \times USC_u(X)$ and $H_\ddagger \subseteq USC_u(X) \times LSC_l(X)$. Fix $\varepsilon > 0$.
	\begin{enumerate}[(a)]
		\item Let $(f_0,g_0) \in H_\dagger$. Suppose that $\hat{f}$ is a viscosity subsolution to $f - \varepsilon H_\dagger f = f_0 - \varepsilon g_0$, then $\hat{f} \leq f_0$. 
		\item Let $(f_0,g_0) \in H_\ddagger$. Suppose that $\hat{f}$ is a viscosity supersolution to $f - \varepsilon H_\ddagger f = f_0 - \varepsilon g_0$, then $\hat{f} \geq f_0$. 
	\end{enumerate}
\end{lemma}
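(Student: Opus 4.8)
The plan is to prove (a) by testing the subsolution $\hat{f}$ directly against the very pair $(f_0,g_0)$ that produces the right-hand side, and to obtain (b) by the mirror-image argument. Throughout, recall that in this section $Y = X$ and $\gamma$ is the identity, so the optimizing-sequence condition refers to sequences $x_n \in X$. Before invoking the subsolution property, I would first check the finiteness hypothesis it requires. Since $\hat{f}$ is a viscosity subsolution, by definition $\hat{f} \in USC_u(X)$, so $\sup_x \hat{f}(x) < \infty$; and since $(f_0,g_0) \in H_\dagger \subseteq LSC_l(X) \times USC_u(X)$ we have $f_0 \in LSC_l(X)$, so $\inf_x f_0(x) > -\infty$. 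Hence
$$\sup_x \left( \hat{f}(x) - f_0(x) \right) \leq \sup_x \hat{f}(x) - \inf_x f_0(x) < \infty,$$
which makes the test pair $(f_0, \varepsilon g_0) \in \varepsilon H_\dagger$ admissible in the definition of a subsolution to $f - \varepsilon H_\dagger f = f_0 - \varepsilon g_0$.

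Next I would extract the optimizing sequence guaranteed by the subsolution property: there are $x_n \in X$ with $\lim_n (\hat{f}(x_n) - f_0(x_n)) = \sup_x (\hat{f}(x) - f_0(x))$ and $\limsup_n (\hat{f}(x_n) - \varepsilon g_0(x_n) - h_1(x_n)) \leq 0$, where $h_1 = f_0 - \varepsilon g_0$ is the right-hand side. The crux is the cancellation
$$\hat{f}(x_n) - \varepsilon g_0(x_n) - h_1(x_n) = \hat{f}(x_n) - \varepsilon g_0(x_n) - \left( f_0(x_n) - \varepsilon g_0(x_n) \right) = \hat{f}(x_n) - f_0(x_n),$$
so the outcome inequality becomes $\limsup_n (\hat{f}(x_n) - f_0(x_n)) \leq 0$. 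Combined with the optimizing-sequence identity this forces $\sup_x (\hat{f}(x) - f_0(x)) \leq 0$, i.e. $\hat{f} \leq f_0$, establishing (a). For (b) I would run the symmetric argument for a supersolution: such $\hat{f}$ lies in $LSC_l(X)$ and $f_0 \in USC_u(X)$ for $(f_0,g_0) \in H_\ddagger$, so $\inf_x(\hat{f} - f_0) > -\infty$, and testing against $(f_0, \varepsilon g_0)$ with the $\liminf$ outcome condition yields $\hat{f} \geq f_0$.

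I do not expect a genuine obstacle: the whole argument is a short unwinding of the definition of a viscosity sub/supersolution for the specially chosen right-hand side $f_0 - \varepsilon g_0$, exploiting that this choice makes the outcome inequality collapse onto the quantity being optimized. The one point I would state explicitly, rather than leave implicit, is the automatic finiteness of $\sup_x(\hat{f} - f_0)$ (respectively $\inf_x(\hat{f} - f_0)$), since the definition only supplies an optimizing sequence when that quantity is finite; here the semicontinuity and boundedness memberships built into $H_\dagger$, $H_\ddagger$ and into the notion of sub/supersolution guarantee it.
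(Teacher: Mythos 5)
Your proposal is correct and follows essentially the same route as the paper's proof: test the subsolution against the pair $(f_0,\varepsilon g_0)$, use the cancellation of the $\varepsilon g_0$ terms in the outcome inequality, and combine with the optimizing-sequence identity to conclude $\sup_x(\hat f(x)-f_0(x))\leq 0$. The only difference is that you explicitly verify the finiteness hypothesis $\sup_x(\hat f(x)-f_0(x))<\infty$ needed to invoke the subsolution definition, which the paper leaves implicit; this is a worthwhile addition rather than a deviation.
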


\begin{proof}
	We only prove (a). Fix $\varepsilon > 0$, $(f_0,g_0) \in H_\dagger$ and let $\hat{f}$ be a viscosity subsolution to $f - \varepsilon H_\dagger f = f_{0} - \epsilon g_0$. Then there is a sequence $x_n$ such that
	\begin{equation*} 
	\lim_n \hat{f}(x_n) - f_0(x_n) = \sup_x \hat{f}(x) - f_0(x)
	\end{equation*}
	and
	\begin{equation*}
	\limsup_n \hat{f}(x_n) - \varepsilon g_0(x_n) - (f_0 - \varepsilon g_0)(x_n) \leq 0.
	\end{equation*}
	We obtain $\sup_x \hat{f}(x) - f_0(x) = \limsup_n \hat{f}(x_n) - f_0(x_n)  \leq 0$ establishing the claim.
\end{proof}

\begin{proof}[Proof of Proposition \ref{proposition:Hhat_properties}]

	We prove the comparison principle. Fix $\lambda >0$ and $h_1,h_2 \in D$. By construction $R(\lambda)h_1$ and $R(\lambda)h_2$ solve $f - \lambda \widehat{H}f = h_i$ classically.	 Let $u$ be a subsolution to $f - \lambda \widehat{H} f = h_1$ and $v$ a supersolution to $f - \lambda \widehat{H} f = h_2$. By Lemma \ref{lemma:identification:viscosity_subsol} (a) for $\widehat{H}$ instead of $H_\dagger$ and $(f_0,g_0) = \left(R(\lambda)h_1, \tfrac{R(\lambda)h_1-h_1}{\lambda}\right)$, we find $u \leq R(\lambda) h_1$. Because $R(\lambda)$ is contractive, we find
	\begin{equation*}
	\sup_x u(x) - v(x) \leq \sup_x R(\lambda)h_1(x) - R(\lambda)h_2(x) \leq \sup_x h_1(x) - h_2(x),
	\end{equation*}
	establishing the comparison principle for $f - \lambda \widehat{H}f = h$.
	
	Next, we prove that $R(\lambda)h$ is a viscosity subsolution to $f - \lambda \widehat{H} f = h$ with an argument similar to that of Proposition \ref{proposition:identify_resolvent}. Pick $R(\mu) h_0 \in \cD(\widehat{H})$. By the pseudo-resolvent property of $R$, see Proposition \ref{proposition:viscosity_solutions_give_pseudoresolvent}, and the contractivity of $R$, we find for $0 < \varepsilon < \lambda \wedge \mu$ that
	\begin{align*}
	& \sup_x R(\lambda)h(x) - R(\mu)h_0(x) \\
	& = \sup_x R(\varepsilon) \left(R(\lambda)h - \varepsilon \frac{R(\lambda)h - h}{\lambda} \right)(x) - R(\varepsilon) \left(R(\mu)h_0 - \varepsilon \frac{R(\mu)h_0 - h_0}{\mu}\right)(x) \\
	& \leq \sup_x R(\lambda)h(x) - \varepsilon \frac{R(\lambda)h(x) - h(x)}{\lambda} -  \left(R(\mu)h_0(x) - \varepsilon \frac{R(\mu)h_0(x) - h_0(x)}{\mu} \right). 
	\end{align*}
	By Lemma \ref{lemma:technical_to_establish_visc_sol}, we conclude that $R(\lambda)h$ is a viscosity subsolution to $f - \lambda \widehat{H} f = h$. The super-solution property follows similarly. 
	
	\smallskip
	
	Thus, by the comparison principle, $R(\lambda)h$ is the unique viscosity solution to $f - \lambda Hf = h$. Finally, suppose that $f_0$ is another classical solution to $f - \lambda \widehat{H} f = h$. Thus, there is a $g_0$ such that $(f_0,g_0) \in \widehat{H}$ and $f_0 - \lambda g_0 = h$. As $R(\lambda)f$ is a viscosity solution to $f - \lambda \widehat{H}f = h$, and hence to $f - \lambda \widehat{H}f = f_0 - \lambda g_0$, we find again by Lemma \ref{lemma:identification:viscosity_subsol} that $f_0 = R(\lambda) h$. 
	
	\smallskip

	Finally, we establish (b). Note that the range condition for $\widehat{H}$ is satisfied by construction. We establish dissipativity. By construction, there is some $\lambda > 0$ and $h \in C_b(X)$ such that $f_1 = R(\lambda)h$ and $g_1 = \lambda^{-1}(f_1 - h)$. As $R(\lambda) h$ is a viscosity subsolution to $f - \lambda \widehat{H}f = h$ by (a), there are $x_n$ such that
	\begin{gather*}
	\lim_n f_1(x_n) - f_2(x_n) = \sup_x f_1(x) - f_2(x), \\
	\limsup_n g_1(x_n) - g_2(x_n) \leq 0.
	\end{gather*}
	This implies for all $\mu >0$ that
	\begin{align*}
	\sup_x f_1(x) - \mu g_1(x) - (f_2(x) - \mu g_2(x) ) & \geq \limsup_n f_1(x_n) - \mu g_1(x_n) - (f_2(x_n) - \mu g_2(x_n) ) \\
	& = \lim_n f_1(x_n) - f_2(x_n) = \sup_x f_1(x) - f_2(x).
	\end{align*}
	A similar argument using the supersolution property yields the other inequality for the infima. We conclude that for all $\mu > 0$:
	\begin{equation*}
	\vn{f_1 - \mu g_1 - (f_2 - \mu g_2)} \geq \vn{f_1 - f_2}
	\end{equation*}
	establishing dissipativity of $\widehat{H}$.
\end{proof}

\subsection{The pseudo-resolvent yields viscosity solutions via a density argument}

We introduce a final tool in the study of pseudo-resolvents $R(\lambda)$ and viscosity solutions to Hamilton-Jacobi equations 
\begin{equation} \label{eqn:pair_of_HJ2}
f - \lambda H_\dagger f = h_1, \qquad f -\lambda H_\dagger f = h_2.
\end{equation}

In Proposition \ref{proposition:viscosity_solutions_give_pseudoresolvent}, we showed that if we can solve \eqref{eqn:pair_of_HJ2} in the viscosity sense for all $\lambda > 0$ and $h \in C_b(X)$, then the comparison principle is sufficient to establish that the solutions $R(\lambda)$ form a contractive pseudo-resolvent.

Often, however, one can construct a pseudo-resolvent $R(\lambda)$ such that $R(\lambda)h$ solves \eqref{eqn:pair_of_HJ2} in the viscosity sense for $\lambda > 0$ and $h = h_1 = h_2 \in D$, where $D \subseteq C_b(X)$ is quasi-dense.  Thus, the main step to establish the pseudo-resolvent property in the proof cannot be carried out. This happens for example in the construction in Theorem \ref{theorem:convergence_of_resolvents}. Even though there we can establish the pseudo-resolvent property by approximation, this situation is not completely satisfying.

The result below gives an alternative that does not need an explicit form for the resolvent, or that it is the limit of a sequence of pseudo-resolvents.

A second reason to establish that $R(\lambda)h$ gives viscosity solutions for all $h \in C_b(X)$ is that this property can be used as input for follow-up arguments, see e.g. Condition \ref{condition:convergence_of_generators_and_conditions_extended_supsuperlim} below.

The argument below is based on compactness and quasi-density of $D$ in $C_b(X)$.

\begin{proposition} \label{proposition:extending_set_of_viscosity_solutions}
	Let $X$ be a space in which compact sets are metrizable. For each $\lambda > 0$ let $R(\lambda) : C_b(X) \rightarrow C_b(X)$ be an operator that is strictly continuous on bounded sets. Let $D$ be quasi-dense in $C_b(X)$.

	Then (i) and (ii) hold.
	\begin{enumerate}[(i)]
		\item Let $H_\dagger,\widetilde{H}_\dagger \subseteq LSC_l(X) \times USC_u(X)$ be two operators such that
		\begin{enumerate}[(a)]
			\item For each $h \in C_b(X)$ and $\lambda > 0$, if $f$ is a viscosity sub-solution to $f - \lambda H_\dagger f = h$ then it is a viscosity subsolution to $f - \lambda \widetilde{H}_\dagger f = h$.
			\item Each function $f \in \cD(\widetilde{H}_\dagger)$ has compact sub-levelsets. 
		\end{enumerate}
		Suppose that $R(\lambda)h$ is a viscosity subsolution to $f - \lambda H_\dagger f = h$ for all $h \in D$ and $\lambda > 0$. Then $R(\lambda)$ is a viscosity subsolution to $f - \lambda \widetilde{H}_\dagger f = h$ for all $h \in C_b(X)$ and $\lambda > 0$.
		\item Let $H_\ddagger,\widetilde{H}_\ddagger \subseteq USC_u(X) \times LSC_l(X)$ be two operators such that
		\begin{enumerate}[(a)]
			\item For each $h \in C_b(X)$ and $\lambda > 0$, if $f$ is a viscosity super-solution to $f - \lambda H_\ddagger f = h$ then it is a viscosity supersolution to $f - \lambda \widetilde{H}_\ddagger f = h$.
			\item Each function $f \in \cD(\widetilde{H}_\ddagger)$ has compact super-levelsets.
		\end{enumerate}
		Suppose that $R(\lambda)h$ is a viscosity supersolution to $f - \lambda H_\ddagger f = h$ for all $h \in D$ and $\lambda > 0$. Then $R(\lambda)$ is a viscosity supersolution to $f - \lambda \widetilde{H}_\ddagger f = h$ for all $h \in C_b(X)$ and $\lambda > 0$.
	\end{enumerate}
\end{proposition}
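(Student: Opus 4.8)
The plan is to prove (i); part (ii) follows by the symmetric argument with suprema replaced by infima, the semicontinuities of $f$ and $g$ interchanged, and sub-levelsets replaced by super-levelsets. Since here $Y = X$ and $\gamma = \mathrm{id}$, I use the simplified viscosity definitions, and I write $u := R(\lambda)h$. The overall strategy is to approximate an arbitrary $h \in C_b(X)$ by elements of $D$, transport the subsolution property through the strict continuity of $R(\lambda)$, and use the compact sub-levelset hypothesis (b) to confine all the relevant optimizing sequences to one fixed compact set where a diagonal limit can be taken.

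First I would fix $\lambda > 0$ and $h \in C_b(X)$ with $\vn{h} \leq r$, and use quasi-density of $D$ to pick $h_m \in D$ with $\vn{h_m} \leq r$ and $h_m \to h$ buc. By hypothesis each $u_m := R(\lambda)h_m$ is a viscosity subsolution to $f - \lambda H_\dagger f = h_m$, hence by (a) a subsolution to $f - \lambda \widetilde{H}_\dagger f = h_m$. Since $R(\lambda)$ is strictly continuous on bounded sets and buc-convergence coincides with strict convergence on bounded sets (Remark \ref{remark:strict_implies_buc}), we get $u_m \to u$ buc; in particular $\sup_m \vn{u_m} < \infty$ and $u \in C_b(X) \subseteq USC_u(X)$, so $u$ is an admissible candidate. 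It then remains to verify the subsolution inequality at an arbitrary test pair $(f,g) \in \widetilde{H}_\dagger$.

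Next I would fix such $(f,g)$ and set $M := \sup_x(u(x) - f(x))$; since $f \in LSC_l(X)$ has $\inf_x f(x) > -\infty$ and the $u_m$ are uniformly bounded, each $M_m := \sup_x(u_m(x) - f(x))$ is finite, so the subsolution property of $u_m$ applies. The central point is localization: choosing $c$ with $\sup_m \vn{u_m} - c < M - 1$, every point where $u_m - f > M-1$ lies in the compact set $K := \{f \leq c\}$, which by (b) is compact and, by the standing assumption on $X$, metrizable. Using that $u-f$ is upper semicontinuous and that $u_m \to u$ uniformly on $K$, I would deduce $M_m \to M$ and that $M$ is attained on $K$. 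For each $m$ the subsolution property supplies a sequence along which $u_m - f \to M_m$ and $\limsup(u_m - \lambda g - h_m) \leq 0$; selecting a single term $z_m$ with $u_m(z_m) - f(z_m) \geq M_m - 1/m$ and $u_m(z_m) - \lambda g(z_m) - h_m(z_m) \leq 1/m$ produces, for large $m$, a sequence $z_m \in K$. Extracting $z_{m_k} \to z_* \in K$, continuity of $u,h$ with uniform convergence of $u_m,h_m$ on $K$ gives $u_{m_k}(z_{m_k}) \to u(z_*)$ and $h_{m_k}(z_{m_k}) \to h(z_*)$; lower semicontinuity of $f$ together with $u_{m_k}(z_{m_k}) - f(z_{m_k}) \to M$ forces $f(z_{m_k}) \to f(z_*)$ and $u(z_*) - f(z_*) = M$, so $z_*$ is a genuine maximizer. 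Passing to the limit in the defect inequality and using $\limsup_k g(z_{m_k}) \leq g(z_*)$ (upper semicontinuity of $g$) yields $u(z_*) - \lambda g(z_*) - h(z_*) \leq 0$, so the constant sequence $y_n \equiv z_*$ witnesses the subsolution property of $u$ at $(f,g)$.

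The hard part is precisely this middle step: marrying the buc-convergence $u_m \to u$ to the compact sub-levelset hypothesis so that every near-optimizer stays inside one fixed compact set, thereby forcing the two one-sided semicontinuities (lower for $f$, upper for $g$) and the genuine continuity of $u$ and $h$ to cooperate and pass the defect inequality to the limit \emph{while preserving} the maximization property. The remaining verifications—finiteness of $M_m$, the convergence $M_m \to M$, and the bookkeeping of which semicontinuous regularization points in which direction—are routine once this localization is in place.
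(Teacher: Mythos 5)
Your proposal is correct and follows essentially the same route as the paper's proof: approximate $h$ by elements of $D$, transfer the subsolution property via hypothesis (a), use the compact sub-levelsets of $f$ to trap the (near-)optimizing points in a single metrizable compact set, extract a convergent subsequence, and pass to the limit using the continuity of $R(\lambda)h$ and $h$ together with the opposite semicontinuities of $f$ and $g$. The only cosmetic differences are that the paper works with exact maximizers on $K=\{f\le 2\vn{h}\}$ and only needs uniform approximation of $h$ on that one compact set, whereas you use $1/m$-near-optimizers and a level $c$ tied to $\sup_m\vn{u_m}$ and $M$.
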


\begin{remark}
	The conditions sketched above are satisfied in a wide range of situations. Consider for example a Hamiltonian $\cH : \bR \times \bR \rightarrow \bR$ in terms of location $x \in \bR$ and momentum $p \in \bR$ (the argument easily extends to e.g. manifolds). We assume that $p \mapsto \cH(x,p)$ is convex for all $x$.
	
	Assume there is a continuously differentiable function $\Upsilon$ that has compact sub-level sets and is such that $\sup_x \cH(x, \Upsilon'(x)) \leq c$ for some $c \in \bR$.
	
	The condition in (i) then holds for the operator $(H_\dagger,\cD(H_\dagger))$ defined by
	\begin{equation*}
	\cD(H_\dagger) := C_b^1(\bR), \qquad \forall \, f \in C_b^1(\bR): H_\dagger f(x) = \cH(x,f'(x)),
	\end{equation*}
	and the operator $(\widetilde{H}_\dagger,\cD(\widetilde{H}_\dagger))$ defined by
	\begin{gather*}
	\cD(\widetilde{H}_\dagger) := \bigcup_{\varepsilon \in (0,1)} \left\{ (1-\varepsilon)f + \varepsilon \Upsilon \, \middle| \, f \in C_b^1(\bR) \right\} \\
	\forall \, \widetilde{f} := (1-\varepsilon) f +  \varepsilon \Upsilon: \qquad \widetilde{H}_\dagger \widetilde{f}(x) = (1-\varepsilon) H_\dagger f(x) + \varepsilon c.
	\end{gather*}
	For a proof of (a) one uses convexity and e.g. methods like Lemmas 7.6 and 7.7 in \cite{FK06}. For an application of these lemmas, see e.g. Section A.2 in \cite{CoKr17}.
\end{remark}

\begin{proof}[Proof of Proposition \ref{proposition:extending_set_of_viscosity_solutions}]
	We only prove (a). Fix $h \in C_b(X)$ and $\lambda > 0$. 
	
	\smallskip	
	
	Pick $(f,g) \in \widetilde{H}_\dagger$. As $f$ has compact sub-level sets, the set $K := \left\{x \, \middle| \, f(x) \leq 2\vn{h} \right\}$ is compact. By quasi-density of $D$ in $C_b(X)$, there are $h_n$ such that $\sup_n \vn{h_n} \leq 2 \vn{h}$ and $\sup_{x \in K} |h(x) - h_n(x)| \leq n^{-1}$.
	
	By the viscosity subsolution property, the fact that $f$ has compact level-sets, and $\sup_n \vn{h_n} \leq 2\vn{h}$, there are $x_n \in K$ such that
	\begin{equation} \label{eqn:extend_viscosity_solution_via_compact_levelsets}
	\begin{split}
	R(\lambda)h_n(x_n) - f(x_n) = \sup_x R(\lambda)h_n(x) - f(x), \\
	R(\lambda)h_n(x_n) - \lambda g(x_n) \leq h_n(x_n).
	\end{split}
	\end{equation}
	Because $K$ is compact and metrizable, we can assume without loss of generality that $x_n$ converges to $x_0$. $h_n$ converges uniformly to $h$ on $K$. As $R(\lambda)$ is strictly continuous on bounded sets also $R(\lambda)h_n$ converges uniformly on $K$ to $R(\lambda) h$. Thus, we can take limit in the first equation and $\limsup$ in the second equation of \eqref{eqn:extend_viscosity_solution_via_compact_levelsets} to obtain
	\begin{equation*} 
	\begin{split}
	R(\lambda)h(x_0) - f(x_0) = \sup_x R(\lambda)h(x) - f(x) \\
	R(\lambda)h_n(x_0) - \lambda g(x_0) \leq h(x_0).
	\end{split}
	\end{equation*}
	Note that we used that $g$ is upper semi-continuous to obtain the correct inequality. These two equations establish that $R(\lambda) h$ is a viscosity solution  $f - \lambda \widetilde{H}_\dagger f = h$.
\end{proof}

\section{Convergence of resolvents} \label{section:convergence_of_operators_and_inverses}

We now turn to the main question of the paper: that of approximation. Our first goal is to establish that viscosity solutions to $f - \lambda H_n f = h$ converge to a viscosity solution of the equation $f - \lambda H f = h$. All arguments will be based in the context of converging spaces. 

As mentioned in the introduction some problems, e.g. slow-fast or multi-scale systems lead to natural limiting Hamiltonians that are multi-valued as a graph $H \subseteq C_b(X) \times C_b(Y)$, where $Y$ is some larger space that takes into account the fast variable or the additional scales. 
The notions of convergence of functions that are relevant have been introduced in Sections \ref{section:general_convergence1} and \ref{section:general_convergence2}.

\subsection{From convergence of Hamiltonians to convergence of resolvents}

	A first notion of a limit of Hamiltonians is given by the notion of an extended limit. This notion is essentially the extension of the convergence condition for generators from the setting of the Trotter-Kato approximation theorem to a more general context. The generalization is made to include operators defined on different spaces, and is also applicable to non-linear operators as well. See e.g. the works of Kurtz and co-authors \cite{EK86,Ku70,Ku73,FK06}.
	
	We define this notion for the setting in which $X = Y$.
	
	\begin{definition} \label{definition:extended_limit}
		Consider the setting of Assumptions \ref{assumption:abstract_spaces} and \ref{assumption:abstract_spaces_q}. Suppose that for each $n$ we have an operator $H_{n} \subseteq M_b(X_n) \times M_b(X_n)$. The \textit{extended limit} $ex-\LIM_n H_{n}$ is defined by the collection $(f,g) \in M_b(X)\times M_b(X)$ such that there exist $(f_n,g_n) \in H_{n}$ with the property that $\LIM_n f_n = f$ and $\LIM_n g_n = g$.
	\end{definition}
	
	We aim to have a more flexible notion of convergence by replacing all operators $H_n$ and $H$ by operators $(H_{n,\dagger},H_{n,\ddagger},H_\dagger,H_\ddagger)$ that intuitively form natural upper and lower bounds for $H_n$ and $H$. We will also generalize by considering limiting Hamiltonians that take values in the set of functions on $Y$ instead of $X$.

	\begin{definition} \label{definition:extended_sub_super_limit}
		Consider the setting of Assumptions \ref{assumption:abstract_spaces2} and \ref{assumption:abstract_spaces_q2}.  Suppose that for each $n$ we have two operators $H_{n,\dagger} \subseteq LSC_l(X_n) \times USC_u(X_n)$ and $H_{n,\ddagger} \subseteq USC_u(X_n) \times LSC_l(X_n)$.
		\begin{enumerate}[(a)]
			\item The \textit{extended sub-limit} $ex-\subLIM_n H_{n,\dagger}$ is defined by the collection $(f,g) \in H_\dagger \subseteq LSC_l(X) \times USC_u(Y)$ such that there exist $(f_n,g_n) \in H_{n,\dagger}$ satisfying
			\begin{gather} 
			\LIM f_n \wedge c = f \wedge c, \qquad \forall \, c \in \bR, \label{eqn:convergence_condition_sublim_constants} \\
			\sup_{n} \sup_{x \in X_n} g_n(x) < \infty, \label{eqn:convergence_condition_sublim_uniform_gn}
			\end{gather}
			and if for any $q \in \cQ$ and sequence $z_{n(k)} \in K_{n(k)}^q$ (with $k \mapsto n(k)$ strictly increasing) such that $\lim_{k} \widehat{\eta}_{n(k)}(z_{n(k)}) = \widehat{\eta}(y)$ in $\cY$ with $\lim_k f_{n(k)}(z_{n(k)}) = f(\gamma(y)) < \infty$ we have
			\begin{equation} \label{eqn:sublim_generators_upperbound}
			\limsup_{k \rightarrow \infty}g_{n(k)}(z_{n(k)}) \leq g(y).
			\end{equation}
			\item The \textit{extended super-limit} $ex-\superLIM_n H_{n,\ddagger}$ is defined by the collection $(f,g) \in H_\ddagger \subseteq USC_u(X) \times LSC_l(Y)$ such that there exist $(f_n,g_n) \in H_{n,\ddagger}$ satisfying
			\begin{gather} 
			\LIM f_n \vee c = f \vee c, \qquad \forall \, c \in \bR, \label{eqn:convergence_condition_superlim_constants} \\
			\inf_{n} \inf_{x \in X_n} g_n(x) > - \infty, \label{eqn:convergence_condition_superlim_uniform_gn}
			\end{gather}
			and if for any $q \in \cQ$ and sequence $z_{n(k)} \in K_{n(k)}^q$ (with $k \mapsto n(k)$ strictly increasing) such that $\lim_{k} \widehat{\eta}_{n(k)}(z_{n(k)}) = \widehat{\eta}(y)$ in $\cY$ with $\lim_k f_{n(k)}(z_{n(k)}) = f(\gamma(y)) > - \infty$ we have
			\begin{equation}\label{eqn:superlim_generators_lowerbound}
			\liminf_{k \rightarrow \infty}g_{n(k)}(z_{n(k)}) \geq g(y).
			\end{equation}
		\end{enumerate}
		
	\end{definition}

	\begin{remark}
		The conditions in \eqref{eqn:convergence_condition_sublim_uniform_gn} and \eqref{eqn:sublim_generators_upperbound} are implied by $\LIMSUP_n g_n \leq g$ and \eqref{eqn:convergence_condition_superlim_uniform_gn} and \eqref{eqn:superlim_generators_lowerbound} are implied by $\LIMINF_n g_n \geq g$.
		
		It is not clear to the author whether a weakened symmetric statement in which \eqref{eqn:convergence_condition_sublim_constants} is replaced by $\LIMINF f_n \geq f$ and \eqref{eqn:convergence_condition_superlim_constants} by $\LIMSUP f_n \leq f$ is possible.
	\end{remark}

	\begin{remark}
		The notion of $ex-\subLIM$ and $ex-\superLIM$ follows closely Condition 7.11 \cite{FK06}. Note, however, that our definition does away with the first conditions in (7.19) and (7.22), which in \cite{FK06} are used in a crucial way in controlling the approximation of $H_n$ by operators $H_n^\varepsilon$ that are constructed from the Yosida approximant $A_n^\varepsilon$ of the linear operator $A_n$. 
	\end{remark}

	Given our main conditions on upper and lower bounds for sequences of Hamiltonians, we can state the main condition for our approximation result.

	\begin{condition} \label{condition:convergence_of_generators_and_conditions_extended_supsuperlim}
		Consider the setting of Assumptions \ref{assumption:abstract_spaces2} and \ref{assumption:abstract_spaces_q2}. 
		
		There are sets $B_n \subseteq M(X_n)$, contractive pseudo-resolvents $R_n(\lambda) : B_n \rightarrow B_n$, $\lambda >0$, operators
		\begin{align*}
		H_{n,\dagger} & \subseteq LSC_l(X_n) \cap B_n  \times USC_u(X_n) \cap B_n, \\
		H_{n,\ddagger} & \subseteq USC_u(X_n) \cap B_n \times LSC_l(X_n) \cap B_n, 
		\end{align*}
		and
		\begin{equation*}
		H_\dagger  \subseteq LSC_l(X) \times USC_u(Y),  \qquad H_\ddagger  \subseteq USC_u(X) \times LSC_l(Y).
		\end{equation*}
		These spaces and operators have the following properties:
		\begin{enumerate}[(a)]
			\item \label{item:approximation_of_functions} There is a $M > 0$ such that for each $h \in C_b(X)$ there are $h_n \in B_n$ such that $\LIM h_n = h$ and $\sup_n \vn{h_n} \leq M \vn{h}$
			\item \label{item:convH_sub_superLIM} $H_\dagger \subseteq ex-\subLIM H_{n,\dagger}$ and $H_\ddagger \subseteq ex-\superLIM H_{n,\ddagger}$;
			\item \label{item:convH_pseudoresolvents_solve_HJ} For each $n \geq 1$, $\lambda > 0$ and $h \in B_n$ the function $(R_n(\lambda)h)^*$ is a viscosity subsolution to $f -  \lambda H_{n,\dagger} = h$. Similarly, $(R_n(\lambda)h)_*$ is a viscosity supersolution to $f - \lambda  H_{n,\ddagger} f = h$.
			\item \label{item:convH_strict_equicont_resolvents} We have local strict equi-continuity on bounded sets: for all $q \in \cQ$, $\delta > 0$ and $\lambda_0 > 0$, there is a $\hat{q} \in \cQ$ such that for all $n$ and $h_{1},h_{2} \in B_n$ and $0 < \lambda \leq \lambda_0$ that
			\begin{equation*}
			\sup_{y \in K_n^q} \left\{ R_n(\lambda)h_{1}(y) - R_n(\lambda)h_{2}(y) \right\} \leq \delta \sup_{x \in X_n} \left\{ h_{1}(x) - h_{2}(x) \right\} + \sup_{y \in K^{\hat{q}}_n} \left\{ h_{1}(y) - h_{2}(y) \right\}.
			\end{equation*}
		\end{enumerate}
	\end{condition}
	
	 \begin{remark}
	 	As a follow up on Remark \ref{remark:balance_LIM_and_equi_cont}, note that \eqref{item:convH_sub_superLIM} and \eqref{item:convH_strict_equicont_resolvents} reflect a careful balance. Proving \eqref{item:convH_strict_equicont_resolvents} will be relatively easy if one chooses large sets for $K^q_n$, which leads to a  difficulties in \eqref{item:convH_sub_superLIM}. Context specific knowledge is needed for a proper choice.
	 \end{remark}

	We briefly discuss the relevance of our four conditions and a sketch of the proof.
	
	\begin{itemize}
		\item Conditions \eqref{item:convH_sub_superLIM} and \eqref{item:convH_pseudoresolvents_solve_HJ} are aimed at showing that $\LIMSUP_n R_n(\lambda) h_n$ yields a viscosity subsolution to $f - \lambda H_\dagger f = h$, whereas $\LIMINF_n R_n(\lambda) h_n$ yields a viscosity supersolution to $f - \lambda H_\ddagger f = h$ if  $\LIM h_n = h$. In combination with the comparison principle for $h$ in a quasi-dense set $D$, we obtain a viscosity solution for $h \in D$ that we call $R(\lambda)h$. In addition, we obtain that $\LIM R_n(\lambda)h_n = R(\lambda)h$. 
		\item Using Proposition \ref{proposition:extending_operators_via_LIM}, the operator $R(\lambda)$ extends to $C_b(X)$ on which it is strictly continuous on bounded sets. In particular the operator is contractive.
		\item The operator $R(\lambda)$ is a pseudo-resolvent as it is the limit of pseudo-resolvents.
	\end{itemize}
	
	 Some technical difficulties need to be settled. The main idea for our the first step of our strategy is to apply the method that was also used in the proof of Proposition \ref{proposition:viscosity_solutions_give_pseudoresolvent}. Here we used contractivity of the resolvent and Lemma \ref{lemma:technical_to_establish_visc_sol}. In this setting, we need to take care of our special notion of $\LIM$. Thus, we need to replace contractivity by control along compact subsets $K_n^q$ for a fixed $q$. This is the main aim of Condition \eqref{item:convH_strict_equicont_resolvents}.

\begin{theorem} \label{theorem:convergence_of_resolvents}
	Let Condition \ref{condition:convergence_of_generators_and_conditions_extended_supsuperlim} be satisfied. Let $D \subseteq C_b(X)$ be quasi-dense in $C_b(X)$. Suppose that for each $\lambda > 0$ and $h \in D$ the comparison principle holds for 
	\begin{equation} \label{eqn:pair_of_HJ_approx_theorem}
	f - \lambda H_\dagger f = h, \qquad f - \lambda H_\ddagger f = h.
	\end{equation}
	Then there is a collection of operators $R(\lambda) : C_b(X) \rightarrow C_b(X)$ such that
	\begin{enumerate}[(a)]
		\item For each $h \in D$ and $\lambda > 0$ the function $R(\lambda)h$ is a viscosity subsolution to $f - \lambda H_\dagger f = h$ and a viscosity supersolution to $f - \lambda H_\ddagger f = h$.
		\item The operators are locally strictly equi-continuous on bounded sets.
		\item The operators form a pseudo-resolvent.
		\item For $\lambda > 0$, $h_n \in B_n$ and $h \in C_b(X)$ such that $\LIM h_n = h$, we have $\LIM R_n(\lambda) h_n = R(\lambda) h$.
	\end{enumerate}
\end{theorem}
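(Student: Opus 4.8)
The plan is to isolate one semi-relaxed limit statement and then feed it into the comparison principle, the extension machinery of Proposition~\ref{proposition:extending_operators_via_LIM}, and a limit argument for the resolvent identity. The core claim I would prove first is: \emph{whenever $h_n \in B_n$ satisfy $\LIM h_n = h$ and $\sup_n \vn{h_n} < \infty$, the functions $\overline{u} := \LIMSUP_n (R_n(\lambda)h_n)^*$ and $\underline{u} := \LIMINF_n (R_n(\lambda)h_n)_*$ are, respectively, a viscosity subsolution to $f - \lambda H_\dagger f = h$ and a viscosity supersolution to $f - \lambda H_\ddagger f = h$.} All four conclusions of the theorem are then extracted from this claim.

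For the subsolution half I would argue as in Proposition~\ref{proposition:identify_resolvent}, but with the global contractivity replaced by the localized control of Condition~\ref{condition:convergence_of_generators_and_conditions_extended_supsuperlim}\eqref{item:convH_strict_equicont_resolvents}. Fix $(f,g)\in H_\dagger$ and, using Condition~\ref{condition:convergence_of_generators_and_conditions_extended_supsuperlim}\eqref{item:convH_sub_superLIM} and Definition~\ref{definition:extended_sub_super_limit}, choose $(f_n,g_n)\in H_{n,\dagger}$ obeying \eqref{eqn:convergence_condition_sublim_constants}--\eqref{eqn:sublim_generators_upperbound}. Writing $F_n := u_n - f_n$ and $G_n := \tfrac{u_n - h_n}{\lambda} - g_n$ on $X_n$ with $u_n := (R_n(\lambda)h_n)^*$, the viscosity subsolution property from Condition~\ref{condition:convergence_of_generators_and_conditions_extended_supsuperlim}\eqref{item:convH_pseudoresolvents_solve_HJ} (invoked through Lemma~\ref{lemma:viscosity_solutions_give_pseudoresolvent}) produces an optimizing sequence along which $F_n \to \sup F_n$ and $\limsup G_n \le 0$, whence the finite-level inequality $\sup_{X_n} F_n \le \sup_{X_n}(F_n - \varepsilon G_n)$ holds for every $0 < \varepsilon < \lambda$. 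Passing this to the limit in the form $\sup_x(\overline{u}(x)-f(x)) \le \sup_{y\in Y}\big[\overline{u}(\gamma(y)) - f(\gamma(y)) - \varepsilon(\tfrac{\overline{u}(\gamma(y))-h(\gamma(y))}{\lambda} - g(y))\big]$ and then applying the analogue of Lemma~\ref{lemma:technical_to_establish_visc_sol} over $Y$ yields the sequence $y_m \in Y$ required by the definition of subsolution. The supersolution half is symmetric.

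I expect the passage to the limit of this supremum inequality to be the main obstacle. The difficulty is that the optimizing points live in $X_n$ and must be brought onto the compacts $K_n^q$ before the convergence \eqref{eqn:convergence_condition_sublim_constants} and the generator bound \eqref{eqn:sublim_generators_upperbound} can be used: one realizes near-maximizers of $\overline{u}-f$ by sequences $x_n\in K_n^q$ (via the definition of $\LIMSUP$ and Assumption~\ref{assumption:abstract_spaces_q2}\eqref{item:assumption_abstract_2_exists_q}), extracts a $\cY$-convergent subsequence with limit $y\in\widehat{K}^q$ and $\gamma(y)$ the chosen base point (Assumption~\ref{assumption:abstract_spaces_q2}\eqref{item:assumption_abstract_2_limit_compact} and \eqref{item:assumption_abstract_2_gamma_mapsintoeachother}), uses \eqref{eqn:convergence_condition_sublim_constants} to force $f_n \to f$ there, and only then invokes \eqref{eqn:sublim_generators_upperbound} to control $g$. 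Balancing the localization demanded by \eqref{eqn:sublim_generators_upperbound} against the equicontinuity estimate of Condition~\ref{condition:convergence_of_generators_and_conditions_extended_supsuperlim}\eqref{item:convH_strict_equicont_resolvents} (which replaces contractivity, trading a controlled $\delta$-error for a supremum over a larger compact $K_n^{\hat q}$) is exactly the tension flagged in Remark~\ref{remark:balance_LIM_and_equi_cont}, and making this close is the technical heart of the argument.

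Granting the claim, the remaining steps are routine and I would carry them out as follows. For $h \in D$, the comparison principle for \eqref{eqn:pair_of_HJ_approx_theorem} gives $\sup_x(\overline{u}(x) - \underline{u}(x)) \le 0$, while $\underline{u} \le \overline{u}$ always holds (from $(R_n(\lambda)h_n)_* \le (R_n(\lambda)h_n)^*$); hence $\overline{u} = \underline{u} =: R(\lambda)h$, which is both upper and lower semicontinuous, so continuous and bounded. Since $\LIMSUP_n R_n(\lambda)h_n \le R(\lambda)h \le \LIMINF_n R_n(\lambda)h_n$, the elementary $\LIM$ squeeze (if $\LIMSUP_n f_n \le f \le \LIMINF_n f_n$ then $\LIM f_n = f$) gives $\LIM R_n(\lambda)h_n = R(\lambda)h$; this proves (a) and (d) on $D$, with independence of the choice of $h_n$ coming from Lemma~\ref{lemma:joint_equicontinuity_with_convergence}. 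Next I would apply Proposition~\ref{proposition:extending_operators_via_LIM} with $T_n = R_n(\lambda)$, whose hypotheses are Condition~\ref{condition:convergence_of_generators_and_conditions_extended_supsuperlim}\eqref{item:approximation_of_functions} and \eqref{item:convH_strict_equicont_resolvents}: the resulting operator is strictly continuous on bounded sets with quasi-closed domain and agrees with $R(\lambda)$ on the quasi-dense set $D$, so its domain is all of $C_b(X)$, giving (b) and the full statement (d). Finally, for (c), $R(\lambda)0 = \LIM R_n(\lambda)0 = 0$, and for $\alpha < \beta$ one sets $\psi_n := R_n(\beta)h_n - \alpha\tfrac{R_n(\beta)h_n - h_n}{\beta} \in B_n$, notes $\LIM \psi_n = R(\beta)h - \alpha\tfrac{R(\beta)h - h}{\beta}$, and applies (d) to $R_n(\alpha)\psi_n = R_n(\beta)h_n$ to obtain the pseudo-resolvent identity in the limit.
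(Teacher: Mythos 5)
Your overall architecture --- semi-relaxed limits give a sub- and a supersolution, the comparison principle on the quasi-dense set $D$ collapses them to a single continuous function $R(\lambda)h$, Proposition \ref{proposition:extending_operators_via_LIM} extends $R(\lambda)$ to all of $C_b(X)$ with the required continuity, and the pseudo-resolvent identity passes to the limit via your $\psi_n$ --- is exactly the paper's, and your assembly of (a)--(d) from the core claim is correct (your explicit limit argument for (c) is in fact more detailed than the paper's ``by continuity''). The problem is in the core claim itself, specifically in how you propose to obtain the inequality that feeds Lemma \ref{lemma:technical_to_establish_visc_sol}. You first derive, from the level-$n$ subsolution property tested against $(f_n,g_n)\in H_{n,\dagger}$, the \emph{global} inequality $\sup_{X_n}F_n \le \sup_{X_n}(F_n-\varepsilon G_n)$, and then try to pass it to the limit. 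This cannot be localized after the fact: the near-maximizers of $F_n-\varepsilon G_n$ over $X_n$ are arbitrary points of $X_n$, and none of \eqref{eqn:convergence_condition_sublim_constants}, \eqref{eqn:sublim_generators_upperbound} or Assumption \ref{assumption:abstract_spaces_q2}\eqref{item:assumption_abstract_2_limit_compact} says anything about sequences that do not lie in some $K_n^{\hat q}$. The equicontinuity estimate of Condition \ref{condition:convergence_of_generators_and_conditions_extended_supsuperlim}\eqref{item:convH_strict_equicont_resolvents}, which you correctly identify as the replacement for contractivity, is a property of the operators $R_n(\varepsilon)$, not of arbitrary pairs of functions, so it cannot be applied to the difference $F_n - \varepsilon G_n$ as you have set it up.

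The paper's proof therefore never forms the global supremum inequality. It writes, for $x_n\in K_n^q$,
\begin{equation*}
R_n(\lambda)h_n(x_n) - f_{n,0}(x_n) \le \sup_{y\in K_n^q}\bigl(R_n(\varepsilon)h_n^\varepsilon(y) - R_n(\varepsilon)h_{n,0}^\varepsilon(y)\bigr),
\end{equation*}
which rests on two classical (not viscosity) facts: $R_n(\lambda)h_n = R_n(\varepsilon)h_n^\varepsilon$ from the pseudo-resolvent identity with $h_n^\varepsilon := R_n(\lambda)h_n - \varepsilon\tfrac{R_n(\lambda)h_n - h_n}{\lambda}$, and $f_{n,0} \ge R_n(\varepsilon)h_{n,0}^\varepsilon$ with $h_{n,0}^\varepsilon := f_{n,0}-\varepsilon g_{n,0}$, the latter coming from Condition \ref{condition:convergence_of_generators_and_conditions_extended_supsuperlim}\eqref{item:convH_pseudoresolvents_solve_HJ} combined with Lemma \ref{lemma:identification:viscosity_subsol} (not Lemma \ref{lemma:viscosity_solutions_give_pseudoresolvent}, which you cite). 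Only once the difference is of the form $R_n(\varepsilon)\phi_1 - R_n(\varepsilon)\phi_2$ does \eqref{item:convH_strict_equicont_resolvents} trade the value at $x_n$ for $2\delta$ plus the value at a near-maximizer $z_n\in K_n^{\hat q}$, and it is these $z_n$ --- living in a fixed compact --- on which Assumption \ref{assumption:abstract_spaces_q2}\eqref{item:assumption_abstract_2_limit_compact}, \eqref{eqn:convergence_condition_sublim_constants} and \eqref{eqn:sublim_generators_upperbound} can then be used. So you have named the right tools and flagged the right tension, but to close the argument you must restructure the core step around the resolvent at the small parameter $\varepsilon$ rather than around the level-$n$ viscosity inequality.
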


	We state the main argument for the theorem as a separate proposition as it is valid in a context that goes slightly beyond the theorem.
	
	\begin{proposition} \label{proposition:existence_viscosity_sub_super_solutions}
		Let Condition \ref{condition:convergence_of_generators_and_conditions_extended_supsuperlim} be satisfied. Let $h_n \in M(X_n)$. 
		\begin{enumerate}[(a)]
			\item Let $h_n \in M(X_n)$, $h \in USC_u(X)$ and suppose that $\LIMSUP_n h_n \leq h$. Define
			\begin{equation*}
			\overline{F} := \LIMSUP_n R_n(\lambda) h_n,
			\end{equation*}
			then $\overline{F}^*$ is a viscosity subsolution to $f - \lambda H_\dagger f = h$.
			\item Let $h_n \in M(X_n)$, $h \in LSC_l(X)$ and suppose that $\LIMINF_n h_n \geq h$. Define
			\begin{equation*}
			\underline{F} := \LIMINF_n R_n(\lambda) h_n,
			\end{equation*}
			then $\underline{F}^*$ is a viscosity supersolution to $f - \lambda H_\ddagger f = h$.
		\end{enumerate} 
		
	\end{proposition}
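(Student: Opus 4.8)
The plan is to prove (a); part (b) follows by the symmetric argument, replacing $\LIMSUP$, $ex\text{-}\subLIM$, suprema and subsolutions by $\LIMINF$, $ex\text{-}\superLIM$, infima and supersolutions, and regularizing by $(\cdot)_*$. Write $u := \overline{F}^*$. Since by contractivity and $\LIMSUP_n h_n \le h$ we have $\sup_n \sup_x R_n(\lambda)h_n \le \sup_n \sup_x h_n < \infty$, the relaxed limit $\overline F$ is bounded above and $u \in USC_u(X)$. To verify that $u$ is a viscosity subsolution of $f - \lambda H_\dagger f = h$, fix $(f,g)\in H_\dagger$; because $u$ is bounded above and $f \in LSC_l(X)$ is bounded below, $M := \sup_x \{u(x) - f(x)\} < \infty$ automatically, so the premise of the subsolution definition is met. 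By item \eqref{item:convH_sub_superLIM} of Condition \ref{condition:convergence_of_generators_and_conditions_extended_supsuperlim} and Definition \ref{definition:extended_sub_super_limit}, choose $(f_n,g_n)\in H_{n,\dagger}$ witnessing $(f,g)\in ex\text{-}\subLIM H_{n,\dagger}$, so that \eqref{eqn:convergence_condition_sublim_constants}, \eqref{eqn:convergence_condition_sublim_uniform_gn} and the outcome bound \eqref{eqn:sublim_generators_upperbound} are available. Note that \eqref{eqn:convergence_condition_sublim_constants} in particular bounds $f_n$ uniformly from below.

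First I would produce good points at each level $n$. By item \eqref{item:convH_pseudoresolvents_solve_HJ}, $(R_n(\lambda)h_n)^*$ is a viscosity subsolution of $f - \lambda H_{n,\dagger} f = h_n$, so testing it against $(f_n,g_n)$ and diagonalizing over the resulting optimizing sequence produces $x_n \in X_n$ with $(R_n(\lambda)h_n)^*(x_n) - f_n(x_n) \ge M_n - n^{-1}$ and $(R_n(\lambda)h_n)^*(x_n) - \lambda g_n(x_n) - h_n(x_n) \le n^{-1}$, where $M_n := \sup_{x\in X_n} \{(R_n(\lambda)h_n)^*(x) - f_n(x)\}$. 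This is exactly the mechanism of Proposition \ref{proposition:identify_resolvent}, where Lemma \ref{lemma:technical_to_establish_visc_sol} supplies such sequences; here the corresponding work is already encoded in item \eqref{item:convH_pseudoresolvents_solve_HJ}. A lower bound $\liminf_n M_n \ge M$ comes for free: given $x$ with $u(x) - f(x)$ near $M$ (so $f(x)<\infty$), the definition of $\LIMSUP$ furnishes a $q$ and $x_n\in K_n^q$ with $\eta_n(x_n)\to\eta(x)$ along which $\limsup_n R_n(\lambda)h_n(x_n) \ge u(x)-\epsilon$, while $\LIM f_n\wedge c = f\wedge c$ forces $f_n(x_n)\to f(x)$; thus $\liminf_n M_n \ge u(x)-f(x)-\epsilon$.

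The main obstacle is the matching upper bound together with the localization of the maximizers $x_n$. Contractivity and $\LIMSUP_n h_n\le h$ bound $R_n(\lambda)h_n$ from above uniformly, and the uniform lower bound on $f_n$ then gives, at a near-maximizer, $f_n(x_n) \le \sup_m\sup_x R_m(\lambda)h_m - M_n + 1$, a quantity bounded uniformly in $n$ since $M_n$ is bounded below. Hence the $x_n$ cannot escape to regions where $f_n$ is large. The delicate point is to upgrade this boundedness to membership in a single $K_n^q$, so that Assumption \ref{assumption:abstract_spaces_q2}\eqref{item:assumption_abstract_2_limit_compact} applies and a subsequence $x_{n(k)}$ has $\widehat\eta_{n(k)}(x_{n(k)}) \to \widehat\eta(y)$ for some $y\in\widehat K^q$, whence by \eqref{item:assumption_abstract_2_gamma_mapsintoeachother} also $\eta_{n(k)}(x_{n(k)})\to\eta(\gamma(y))$ with $\gamma(y)\in K^q$; this is where the balance between the choice of the $K_n^q$ and the equi-continuity item \eqref{item:convH_strict_equicont_resolvents}, flagged in Remark \ref{remark:balance_LIM_and_equi_cont}, must be exploited. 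Once the points sit in $K_n^q$, the definition of $\LIMSUP$ (applied to $(R_n(\lambda)h_n)^*$, whose relaxed limit agrees with $\overline F$ on metrizable compact sets after enlarging $q$) gives $\limsup_k (R_{n(k)}(\lambda)h_{n(k)})^*(x_{n(k)}) \le \overline F(\gamma(y)) \le u(\gamma(y))$, and $\LIM f_n\wedge c = f\wedge c$ gives $f_{n(k)}(x_{n(k)}) \to f(\gamma(y))$. Combined with $(R_{n(k)}(\lambda)h_{n(k)})^*(x_{n(k)}) - f_{n(k)}(x_{n(k)}) \ge M_{n(k)} - n(k)^{-1}$ and $\liminf_n M_n \ge M$, this pins $M_{n(k)} \to M$, forces $(R_{n(k)}(\lambda)h_{n(k)})^*(x_{n(k)}) \to u(\gamma(y))$, and yields $u(\gamma(y)) - f(\gamma(y)) = M$; thus the supremum defining $M$ is attained at $\gamma(y)$ and \eqref{eqn:subsol_optimizing_sequence} holds for the constant sequence $y_k \equiv y$.

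Finally I would verify the outcome \eqref{eqn:subsol_sequence_outcome}. Along $x_{n(k)} \in K_{n(k)}^q$ with $f_{n(k)}(x_{n(k)}) \to f(\gamma(y))<\infty$, the extended-sub-limit bound \eqref{eqn:sublim_generators_upperbound} gives $\limsup_k g_{n(k)}(x_{n(k)}) \le g(y)$, while $\LIMSUP_n h_n \le h$ gives $\limsup_k h_{n(k)}(x_{n(k)}) \le h(\gamma(y))$. Using $(R_{n(k)}(\lambda)h_{n(k)})^*(x_{n(k)}) \le \lambda g_{n(k)}(x_{n(k)}) + h_{n(k)}(x_{n(k)}) + n(k)^{-1}$ together with the established limit $(R_{n(k)}(\lambda)h_{n(k)})^*(x_{n(k)}) \to u(\gamma(y))$, one passes to the limit to obtain $u(\gamma(y)) \le \lambda g(y) + h(\gamma(y))$, which is precisely \eqref{eqn:subsol_sequence_outcome} for $y_k\equiv y$. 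This establishes the subsolution property of $u$ and hence part (a); part (b) is entirely analogous, with the localization step of the third paragraph remaining the crux.
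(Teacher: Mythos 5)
There is a genuine gap, and you have in fact named it yourself: the ``delicate point'' of upgrading the boundedness of $f_n$ at near-maximizers of $(R_n(\lambda)h_n)^* - f_n$ to membership of those near-maximizers in a single family $K_n^q$ is never carried out, only flagged as ``the crux'' that ``must be exploited.'' From the stated hypotheses this step cannot be carried out: the sets $K_n^q$ of Assumption \ref{assumption:abstract_spaces_q2} are an abstract family of compacts with no relation to sublevel sets of $f_n$, and nothing in Condition \ref{condition:convergence_of_generators_and_conditions_extended_supsuperlim} is a coercivity assumption forcing global near-maximizers over $X_n$ into any $K_n^q$ (in the setting of Example \ref{example:LIM_dimension_reduction}, for instance, such points may escape to infinity in the $Z$-direction while $f_n$ stays bounded there). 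Since Assumption \ref{assumption:abstract_spaces_q2}\eqref{item:assumption_abstract_2_limit_compact} only yields convergent subsequences for sequences that do lie in some $K_n^q$, the entire third and fourth paragraphs of your argument rest on a step that fails.

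The paper's proof is structured precisely so as to never need this localization. It does not attempt to exhibit the optimizing sequence of the subsolution definition directly; instead it invokes Lemma \ref{lemma:technical_to_establish_visc_sol}, which reduces the subsolution property to the single inequality \eqref{eqn:establish_subsol_target} between suprema, for $0<\varepsilon<\lambda$. That inequality is proved by the pseudo-resolvent trick of Proposition \ref{proposition:identify_resolvent}: writing $f_n = R_n(\varepsilon)h_n^\varepsilon$ with $h_n^\varepsilon = f_n - \varepsilon g_n$ and using $f_{n,0}\geq R_n(\varepsilon)h_{n,0}^\varepsilon$ (Lemma \ref{lemma:identification:viscosity_subsol}), one starts from an arbitrary sequence $x_n\in K_n^q$ converging to a given $x$ and applies the equi-continuity Condition \ref{condition:convergence_of_generators_and_conditions_extended_supsuperlim}\eqref{item:convH_strict_equicont_resolvents} to bound $\sup_{K_n^q}(f_n - f_{n,0})$ by $2\delta$ plus the value of $h_n^\varepsilon - h_{n,0}^\varepsilon$ at a near-maximizer $z_n$ over $K_n^{\hat q}$. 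The points $z_n$ live in the compact family $K_n^{\hat q}$ \emph{by construction}, so Assumption \ref{assumption:abstract_spaces_q2}\eqref{item:assumption_abstract_2_limit_compact} applies to them, and the convergence conditions \eqref{eqn:convergence_condition_sublim_constants}--\eqref{eqn:sublim_generators_upperbound} can be evaluated along $z_{n(k)}$. In short: the equi-continuity hypothesis is not a tool for confining global maximizers, it is the device that replaces global contractivity by an estimate whose relevant points are already confined. You should restructure your argument along these lines; note also that your version makes no use of the pseudo-resolvent identity, which is essential to the reduction.
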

	
	It should be noted that the proof of this proposition does not use Condition \ref{condition:convergence_of_generators_and_conditions_extended_supsuperlim} \eqref{item:approximation_of_functions}.

	The main idea of the proof of the proposition is based on the proof of Lemma 7.14 in \cite{FK06}, but improves on this result in terms of the three properties mentioned in the introduction: applicability outside of the context of large deviations, operators $H_{n,\dagger},H_{n,\ddagger}$ instead of $H_n$ and the possibility to work in $\cX$ instead of in $X$.

		\begin{proof}[Proof of Proposition \ref{proposition:existence_viscosity_sub_super_solutions}]
			We only prove (a). Note first of all that by contractivity of $R_n(\lambda)$, we have
			\begin{equation} \label{eqn:upper_bound_resolvents}
			\sup_{n} \sup_{x \in X_n} R_n(\lambda)h_n(x) \leq \sup_{n} \sup_{x \in X_n} h_n(x) < \infty,
			\end{equation}
			so that we can indeed write down $\overline{F} := \LIMSUP R_n(\lambda) h_n$.
			
			\smallskip
			
			We prove that $\overline{f} := \overline{F}^*$ is a viscosity subsolution of $f - \lambda H_\dagger f = h$. First of all, $\overline{f}$ is upper semi-continuous by construction. Second, $\overline{f}$ is bounded from above as seen above as a consequence of \eqref{eqn:upper_bound_resolvents}. We will prove that $\overline{f}$ also satisfies the final property of the definition of subsolutions. As in the proof of Proposition \ref{proposition:identify_resolvent}, we use Lemma \ref{lemma:technical_to_establish_visc_sol}(a).
			
			Thus, for $(f_0,g_0) \in H_\dagger$ it suffices to prove that for $0 < \varepsilon< \lambda$
			\begin{align}
			& \sup_y \left\{ \overline{F}(\gamma(y)) - f_0(\gamma(y)) \right\} \label{eqn:establish_subsol_target} \\
			& \leq \sup_y \left\{ \left(\overline{F}(\gamma(y)) - \epsilon\left(\frac{\overline{F}(\gamma(y)) - h(\gamma(y))}{\lambda}\right) \right) - \left(f_0(\gamma(y)) - \epsilon g_0(y) \right) \right\} < \infty, \notag
			\end{align}
			as we can replace $\overline{F}$ by its upper semi-continuous regularization $\overline{f}$ first on the right and then on the left-side of the inequality. Note that the lemma indeed suffices to establish the sub-solution property as for any function $\phi$ on $X$, we have $\sup_x \phi(x) = \sup_y \phi(\gamma(x))$ because $\gamma$ is surjective.
			
			\bigskip

			We extend the proof of Proposition \ref{proposition:identify_resolvent}. In that proof, we used that the pseudo-resolvent is contractive. In this case, we have to pass to the limit using the adapted notion of convergence. Thus, we replace contractivity by strict equi-continuity on bounded sets, Condition \ref{condition:convergence_of_generators_and_conditions_extended_supsuperlim} \eqref{item:convH_strict_equicont_resolvents}.
			
			\smallskip

			For every $n \geq 1$ set $f_n := R_n(\lambda) h_n$ and $g_n :=  \frac{f_n - h_n}{\lambda}$. Note that $f_n$ is well defined as $h_n \in B_n$. By assumption there are $(f_{n,0},g_{n,0}) \in H_{n,\dagger}$ such that \eqref{eqn:convergence_condition_sublim_constants}, \eqref{eqn:convergence_condition_sublim_uniform_gn}  and \eqref{eqn:sublim_generators_upperbound} are satisfied. Let $\varepsilon \in (0,\lambda)$ and define
			\begin{equation} \label{eqn:defs_of_h_n}
			h_n^{\varepsilon}  := f_n - \varepsilon g_n=  f_n - \varepsilon \frac{f_n - h_n}{\lambda}, \qquad h_{n,0}^{\varepsilon}  := f_{n,0} - \varepsilon g_{n,0}.
			\end{equation} 
			
			Note the following
			\begin{enumerate}[(1)]
				\item \label{item:pseudoresolvent} $f_n = R_n(\varepsilon)h_{n}^{\varepsilon}$ because $R_n$ is a pseudo-resolvent;
				\item \label{item:viscosity_bound} As the domain and range of $H_{n,\dagger}$ are contained in $B_n$, we can apply $R_n(\varepsilon)$ to $h_{n,0}^{\varepsilon}$. By Condition \ref{condition:convergence_of_generators_and_conditions_extended_supsuperlim} \eqref{item:convH_pseudoresolvents_solve_HJ} and Lemma \ref{lemma:identification:viscosity_subsol} we find $f_{n,0} \geq (R_n(\varepsilon)h_{n,0}^{\varepsilon})^* \geq R_n(\varepsilon)h_{n,0}^{\varepsilon}$.
				\item Finally
				\begin{equation} \label{eqn:uniform_upper_lower_bound_on_hepsilon}
				\begin{aligned}
				& \sup_{n} h_n^{\varepsilon}(x) \leq \sup_{n} h_n(x) =: M_1 < \infty, \\
				& \inf_{n} h_{n,0}^{\varepsilon}(x) \geq \inf_{n} f_{n,0} - \varepsilon \sup_{n} g_{n,0} =: M_2 > - \infty.
				\end{aligned}
				\end{equation}
				The first equality follows by $\LIMSUP_n h_{n} \leq h$ and $\varepsilon < \lambda$, whereas the second inequality follows by \eqref{eqn:convergence_condition_sublim_constants} and \eqref{eqn:convergence_condition_sublim_uniform_gn}. Denote by $M := M_1 - M_2$.
			\end{enumerate}

			Pick $x \in X$. Pick any $q \in \cQ$ such that $x \in K^q$ and such that there are $x_n \in K_n^q$ which satisfy $\eta_n(x_n) \rightarrow \eta(x)$ in $\cX$ (There is at least one such $q$ by Assumption \ref{assumption:abstract_spaces_q2} \eqref{item:assumption_abstract_2_exists_q}). Now take $\delta' > 0$ arbitrary. By \eqref{eqn:uniform_upper_lower_bound_on_hepsilon} and Condition \ref{condition:convergence_of_generators_and_conditions_extended_supsuperlim} \eqref{item:convH_strict_equicont_resolvents}, we find $\hat{q} \in \cQ$ such that for all $n$, functions  $\phi_1,\phi_2 \in B_n$ satisfying $\sup_{y \in X_n} \phi_1(y) - \phi_2(y) \leq M$ 
			\begin{equation*}
			\sup_{y \in K_n^q} R_n(\varepsilon) \phi_1(y) - R_n(\varepsilon) \phi_2(y)  \leq \delta' M +  \sup_{y \in K_n^{\hat{q}}} \phi_1(y) - \phi_1(y) 
			\end{equation*}
			Fix $\delta = M \delta'$, which we can choose arbitrarily small by choosing $\delta'$ small. In addition, we can find $z \in K_n^{\hat{q}}$ such that 
			\begin{equation} \label{eqn:proof_bound_equi_cont}
			\sup_{y \in K_n^q} R_n(\varepsilon) \phi_1(y) - R_n(\varepsilon) \phi_2(y)  \leq \delta +  \sup_{y \in K_n^{\hat{q}}} \phi_1(y) - \phi_1(y)  \leq 2\delta +  \phi_1(z) - \phi_2(z).
			\end{equation}
			For next computation, we use \eqref{item:pseudoresolvent} and \eqref{item:viscosity_bound} in line 3, Equation \eqref{eqn:defs_of_h_n} in line 5 and for line 4 we find $z_n \in K_n^{\widehat{q}}$ such that Equation \eqref{eqn:proof_bound_equi_cont} holds for all $n$ and $h_{n}^\varepsilon,h_{n,0}^\varepsilon$ instead of $\phi_1,\phi_2$. This gives
			\begin{align}
			&  f_n(x_n) - f_{n,0}(x_n) \label{eqn:proof_conv_resolvents_main_LHS} \\
			& \quad \leq  \sup_{y \in K_n^q} f_n(y) - f_{n,0}(y) \notag \\
			& \quad \leq  \sup_{y \in K_n^q} R_n(\varepsilon) h_n^{\varepsilon}(y) - R_n(\varepsilon) h_{n,0}^{\varepsilon}(y) \notag \\
			& \quad \leq 2\delta +  h_n^{\varepsilon}(z_n) - h_{n,0}^{\varepsilon}(z_n)  \notag \\
			& \quad \leq 2\delta +  \left(f_n(z_n) - \varepsilon \frac{f_n(z_n) - h_n(z_n)}{\lambda}\right) - \left( f_{n,0}(z_n) - \varepsilon g_{n,0}(z_n)\right). \notag
			\end{align}
			Recall that our aim is to prove \eqref{eqn:establish_subsol_target}. Our next step is to take a $\limsup_n$ on both sides of the inequality. To study this limsup, we see that only the term $g_{n,0}$ is not yet understood in terms of its limiting behavior. Our aim is to apply \eqref{eqn:sublim_generators_upperbound}, for which we need to construct a subsequence $n(k)$ for which $\widehat{\eta}_{n(k)}(z_{n(k)}) \rightarrow \widehat{\eta}(y)$ in $\cY$ with $y \in \widehat{K}^{\hat{q}}$ satisfying $f_{n(k),0}^{\hat{q}}(z_{n(k)}) \rightarrow f_0(\gamma(y))$. 
			
			Without loss of generality, we assume that
			\begin{equation*} 
			\limsup_n f_n(x_n) - f_{n,0}(x_n) > - \infty,
			\end{equation*}
			as there is nothing to prove otherwise.	Again without loss of generality, we restrict to a subsequence $n(k)$ of $n$ such that
			\begin{itemize}
				\item the $\limsup$ on the left-hand side of \eqref{eqn:proof_conv_resolvents_main_LHS}  is achieved as a limit:
				\begin{equation} \label{eqn::limsup_achieved}
				\limsup_n f_n(x_n) - f_{n,0}(x_n) = \lim_k f_{n(k)}(x_{n(k)}) - f_{n(k),0}(x_{n(k)}) > - \infty,
				\end{equation}
				\item there is some $y \in \widehat{K}^{\hat{q}}$ with $\lim_k \hat{\eta}_{n(k)}(z_{n(k)}) = \hat{\eta}(y)$. This is possible due to Definition \ref{assumption:abstract_spaces_q2} \eqref{item:assumption_abstract_2_limit_compact}. Note that by continuity of $\hat{\gamma} : \cY \rightarrow \cX$ also $\lim_k \eta_{n(k)}(z_{n(k)}) = \eta(\gamma(y))$ with $\gamma(y) \in K^q$ (Assumption \ref{assumption:abstract_spaces_q2} \eqref{item:assumption_abstract_2_gamma_mapsintoeachother}).
			\end{itemize}

			We first consider the $\liminf_k$ over both sides of the inequality \eqref{eqn:proof_conv_resolvents_main_LHS}. By our assumption \eqref{eqn::limsup_achieved} on $x_{n(k)}$ we find 
			\begin{equation*} 
			\liminf_k \left(f_n(z_n) - \varepsilon \frac{f_n(z_n) - h_n(z_n)}{\lambda}\right) - \left( f_{n,0}(z_n) - \varepsilon g_{n,0}(z_n)\right) > - \infty.
			\end{equation*}
			By \eqref{eqn:upper_bound_resolvents} the sequences $\{f_n\}_{n \geq 1}$ and $\{h_n\}_{n \geq 1}$ are uniformly bounded from above and by assumption \eqref{eqn:convergence_condition_sublim_uniform_gn} we have a uniform upper bound on $\{g_{n,0}\}_{n \geq 1}$. This leads to 
			\begin{equation*}
			\limsup_k f_{n(k),0}(z_{n(k)}) < \infty.
			\end{equation*}
			By \eqref{eqn:convergence_condition_sublim_constants}, choosing $c$ larger than this $\limsup$, we find $\lim_k  f_{n(k),0}(z_{n(k)}) = f_0(\gamma(y))$, which established also the condition for the application of \eqref{eqn:sublim_generators_upperbound}. Taking the $\limsup_k$ over both sides of \eqref{eqn:proof_conv_resolvents_main_LHS}, we find
			\begin{multline*}
			\limsup_n f_n(x_n) - f_{n,0}(x_n) = \lim_k f_{n(k)}(x_{n(k)}) - f_{n(k),0}(x_{n(k)}) \\
			\leq 2 \delta + \left(\overline{F}(\gamma(y))- \varepsilon \frac{\overline{F}(\gamma(y)) - h(\gamma(y))}{\lambda}\right) - \left( f_{0}(\gamma(y)) - \varepsilon g_{0}(y)\right).
			\end{multline*}
			Now we take the supremum over $y$ on the right-hand side. Afterwards, we send $\delta \rightarrow 0$. This gives the correct right-hand side for \eqref{eqn:establish_subsol_target}. Next, we work on the left-hand side. We take a supremum over all $q$ and sequences $\eta_n(x_n) \rightarrow \eta(x)$ in $\cX$ with $x_n \in K_n^q$, followed by a supremum over $x$ This establishes \eqref{eqn:establish_subsol_target} which concludes the proof.
		\end{proof}

	\begin{proof}[Proof of Theorem \ref{theorem:convergence_of_resolvents}]
		Fix $\lambda > 0$, $h \in D$ and $h_n \in B_n$ with $\LIM h_n = h$. Let $\overline{F}$ and $\underline{F}$ be as in Proposition \ref{proposition:existence_viscosity_sub_super_solutions}. By construction, it follows that $\overline{F} \geq \underline{F}$. By the comparison principle, we also have $\underline{F}_* \leq \overline{F}^*$. The combination of these inequalities yields $\underline{F} = \underline{F}_* = \overline{F} = \overline{F}^*$. Denote this function by $\hat{R}(\lambda) h$, which is therefore the unique viscosity solution to \eqref{eqn:pair_of_HJ_approx_theorem}.
		
		\smallskip
		
		Following Proposition \ref{proposition:extending_operators_via_LIM}, define
		\begin{equation*}
		\cD(R(\lambda)) = \left\{h \in C_b(X) \, \middle| \, \exists \, h_n \in B_n: \LIM h_n = h, \, \LIM R_n(\lambda) h_n \text{ exists and is continuous}  \right\}
		\end{equation*}
		and $R(\lambda)h = \LIM R_n(\lambda) h_n$. By the argument above, $R(\lambda)$ extends $\hat{R}(\lambda)$. By Proposition \ref{proposition:extending_operators_via_LIM} $\cD(R(\lambda))$ is quasi-closed, and as $D$ is quasi-dense in $C_b(X)$ by assumption we find $\cD(R(\lambda)) = C_b(X)$. In addition Proposition \ref{proposition:extending_operators_via_LIM} $\cD(R(\lambda))$ yields that $R(\lambda)$ is strictly continuous on bounded sets.
		
		The pseudo-resolvent property follows by continuity from that of the operators $R_n$. The local strict continuity on bounded sets can be proven by making the estimates in the proof of Proposition \ref{proposition:extending_operators_via_LIM} (a) uniform for $\lambda$ with $0 < \lambda \leq \lambda_0$, using the uniform choice of $\hat{q}$ as in Condition \ref{condition:convergence_of_generators_and_conditions_extended_supsuperlim} \eqref{item:convH_strict_equicont_resolvents}.
	\end{proof}

	\section{Convergence of semigroups} \label{section:convergence_of_semigroups}

	\begin{theorem} \label{theorem:CL_extend}
		Let Condition \ref{condition:convergence_of_generators_and_conditions_extended_supsuperlim} be satisfied. Suppose in addition that for all $n$ we have a collection of functions $B_n$ such that: $C_b(X_n) \subseteq B_n \subseteq M(X_n)$ and suppose that $R_n(\lambda) C_b(X_n) \subseteq C_b(X_n)$.
		
		\smallskip
		
		 Let $D \subseteq C_b(X)$ be quasi-dense in $C_b(X)$. Suppose that for each $\lambda > 0$ and $h \in D$ the comparison principle holds for 
		\begin{equation*} 
		f - \lambda H_\dagger f = h, \qquad f - \lambda H_\ddagger f = h.
		\end{equation*}
		Denote by $R(\lambda) : C_b(X) \rightarrow C_b(X)$ the operators constructed in Theorem \ref{theorem:convergence_of_resolvents}
		
		Consider the operators
		\begin{align} 
		\widehat{H}_n & := \bigcup_{\lambda} \left\{\left(R_n(\lambda)h, \frac{R_n(\lambda) h - h}{\lambda}\right) \, \middle| \, h \in C_b(X_n) \right\}, \label{def:hatHn} \\
		\widehat{H} & := \bigcup_{\lambda} \left\{\left(R(\lambda)h, \frac{R(\lambda) h - h}{\lambda}\right) \, \middle| \, h \in C_b(X) \right\}, \label{def:hatH}
		\end{align}
		as in Proposition \ref{proposition:Hhat_properties}.
		
		\smallskip
		
		Let $V_n(t)$ and $V(t)$ be the operator semigroups on the uniform closures of $\cD(\widehat{H}_n)$ and $\cD(\widehat{H})$ generated by $\widehat{H}_n$ and $\widehat{H}$ as in the Crandall-Ligget theorem, see Theorem \ref{theorem:CL}. 
		
	Suppose that the semigroups $V_n(t)$ are strictly equi-continuous on bounded sets: for all $q \in \cQ$, $\delta > 0$ and $t_0 > 0$, there is a $\hat{q} \in \cQ$ such that for all $n$ and $h_{1},h_{2} \in B_n$ and $0 \leq t \leq t_0$ that
		\begin{equation*}
		\sup_{y \in K_n^q} \left\{ V_n(t)h_{1}(y) - V_n(t)h_{2}(y) \right\} \leq \delta \sup_{x \in X_n} \left\{ h_{1}(x) - h_{2}(x) \right\} + \sup_{y \in K^{\hat{q}}_n} \left\{ h_{1}(y) - h_{2}(y) \right\}.
		\end{equation*}
		
		Denote by $\cD$ and the quasi-closure of the uniform closure of $\cD(\widehat{H})$.
		Then
		\begin{enumerate}[(a)]
			\item \label{item:semigroup_density_domains} We have $\widehat{H} \subseteq ex-\LIM \widehat{H}_n$ as in Definition \ref{definition:abstract_LIM}. That is, for all $(f,g) \in \widehat{H}$ there are $(f_n,g_n) \in \widehat{H}_n$ such that $\LIM f_n = f$ and $\LIM g_n = g$.
			\item \label{item:semigroup_extensionV} The semigroup $V(t)$ extends to the quasi-closure $\cD$ of $\cD(\widehat{H})$ on which it is locally strictly equi-continuous on bounded sets.
			\item  \label{item:semigroup_approx_of_domain} For each $f \in \cD$ there are $f_n$ in the uniform closures of $\cD(\widehat{H}_n)$ such that $\LIM f_n = f$.
			\item \label{item:semigroup_convergence_semigroups}  If $f_n$ are in the uniform closures of $\cD(\widehat{H}_n)$ and $f \in \cD$ such that $\LIM f_n =f$ and $t_n \rightarrow t$ then $\LIM V_n(t_n)f_n = V(t) f$. 
		\end{enumerate}
	\end{theorem}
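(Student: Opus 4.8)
The plan is to reduce the convergence of the semigroups to the already-established convergence of the pseudo-resolvents (Theorem \ref{theorem:convergence_of_resolvents}\,(d)) through the Crandall--Liggett exponential formula $V(t) = \lim_m R(t/m)^m$ of Theorem \ref{theorem:CL}. I would first dispose of \eqref{item:semigroup_density_domains}: given $(f,g) = (R(\lambda)h, \lambda^{-1}(R(\lambda)h - h)) \in \widehat{H}$ with $h \in C_b(X)$, use Condition \ref{condition:convergence_of_generators_and_conditions_extended_supsuperlim}\,\eqref{item:approximation_of_functions} to pick $h_n \in C_b(X_n)$ with $\LIM h_n = h$ and $\sup_n \vn{h_n} \le M\vn{h}$ (continuous representatives are available in the running setting), set $f_n := R_n(\lambda) h_n \in C_b(X_n)$ and $g_n := \lambda^{-1}(f_n - h_n)$, so that $(f_n,g_n) \in \widehat{H}_n$. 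Then $\LIM f_n = R(\lambda)h = f$ by Theorem \ref{theorem:convergence_of_resolvents}\,(d), and since $\LIM$ is linear $\LIM g_n = \lambda^{-1}(f - h) = g$. I record for later use that contractivity gives $\sup_n \vn{g_n} \le 2\lambda^{-1} M \vn{h} < \infty$.

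The heart of the argument is the convergence $\LIM V_n(t_n) f_n = V(t) f$ for $f \in \cD(\widehat{H})$ and the nice approximants $f_n \in \cD(\widehat{H}_n)$ produced above; I would establish it, for $t>0$, via the four-term decomposition valid for each $m \ge 1$:
\begin{align*}
V_n(t_n) f_n - V(t) f &= \left[V_n(t_n)f_n - R_n(t_n/m)^m f_n\right] + \left[R_n(t_n/m)^m f_n - R_n(t/m)^m f_n\right] \\
&\quad + \left[R_n(t/m)^m f_n - R(t/m)^m f\right] + \left[R(t/m)^m f - V(t)f\right].
\end{align*}
The first and last brackets are controlled in supremum norm by the quantitative Crandall--Liggett estimate of \cite{CL71}, namely $\vn{R(t/m)^m f - V(t)f} \le (t/\sqrt m)\vn{g}$ and likewise $\vn{R_n(t_n/m)^m f_n - V_n(t_n)f_n} \le (t_n/\sqrt m)\vn{g_n}$; since this bound uses $R_n$ only through contractivity and $\sup_n \vn{g_n} < \infty$ with $t_n \to t$, it is $O(1/\sqrt m)$ uniformly in $n$. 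The second bracket I would handle by telescoping $R_n(t_n/m)^m - R_n(t/m)^m$ into $m$ factors and applying term by term the pseudo-resolvent Lipschitz estimate $\vn{R_n(\beta)w - R_n(\alpha)w} \le (1 - \alpha/\beta)\vn{R_n(\beta)w - w} \le 2(1-\alpha/\beta)\vn{f_n}$ (a consequence of the pseudo-resolvent identity and contractivity, with the roles of $\alpha,\beta$ swapped according to the sign of $t_n-t$), giving a bound $2m\,|1 - t/t_n|\,\vn{f_n}$ that vanishes as $n \to \infty$ at fixed $m$. The third bracket converges in the $\LIM$ sense to $0$ at fixed $m$ by iterating Theorem \ref{theorem:convergence_of_resolvents}\,(d) $m$ times (set $k_n := R_n(t/m)^{m-1}f_n$, $k := R(t/m)^{m-1}f$ and use $\LIM R_n(t/m)k_n = R(t/m)k$, inductively). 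Evaluating along any $x_n \in K_n^q$ with $\eta_n(x_n) \to \eta(x)$, taking $\limsup_n$ at fixed $m$ kills the second and third brackets and leaves $O(1/\sqrt m)$; letting $m \to \infty$ yields $\LIM V_n(t_n) f_n = V(t) f$. The case $t=0$ (and $t_n\to 0$) is immediate from the strong continuity of the semigroups together with the equi-continuity.

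To reach \eqref{item:semigroup_extensionV}, \eqref{item:semigroup_approx_of_domain} and the general case of \eqref{item:semigroup_convergence_semigroups} I would feed the assumed strict equi-continuity on bounded sets of $\{V_n(t)\}$ (uniform over $t \in [0,t_0]$) into the machinery of Proposition \ref{proposition:extending_operators_via_LIM} and Lemma \ref{lemma:joint_equicontinuity_with_convergence}, taking $T_n := V_n(t_n)$: the equi-continuity makes $\LIM V_n(t_n) f_n$ independent of the approximating sequence, so the value $V(t)f$ from the previous paragraph is well defined and, by Proposition \ref{proposition:extending_operators_via_LIM}\,(a), strictly continuous on bounded sets with quasi-closed domain. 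On $\cD(\widehat{H})$ this limit operator agrees with the Crandall--Liggett semigroup $V(t)$, and since the latter is a norm-contraction, the two agree on the uniform closure of $\cD(\widehat{H})$; the limit operator thus furnishes the desired extension of $V(t)$ to the quasi-closure $\cD$, proving \eqref{item:semigroup_extensionV}. Claim \eqref{item:semigroup_approx_of_domain} follows by a diagonal argument: an element of $\cD$ is a bounded buc-limit of norm-limits of elements of $\cD(\widehat{H})$, each approximated in $\LIM$ by elements of $\cD(\widehat{H}_n)$ via \eqref{item:semigroup_density_domains}, and diagonalizing over the compacts $K_n^q$ produces $f_n$ in the uniform closures of $\cD(\widehat{H}_n)$ with $\LIM f_n = f$. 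Finally, \eqref{item:semigroup_convergence_semigroups} in full generality follows from the core convergence for a nice approximating sequence together with Lemma \ref{lemma:joint_equicontinuity_with_convergence}, which transfers the conclusion to any $f_n$ in the uniform closures of $\cD(\widehat{H}_n)$ with $\LIM f_n = f$.

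The main obstacle I expect is making the two limiting regimes coexist inside the nonstandard $\LIM$ framework: the Crandall--Liggett error (first and last brackets) must vanish as $m \to \infty$ \emph{uniformly in} $n$, which forces one both to secure $\sup_n \vn{g_n} < \infty$ for the approximants and to exploit that the Crandall--Liggett rate depends on $R_n$ only through contractivity, whereas the resolvent-convergence bracket vanishes only at fixed $m$ as $n \to \infty$. Keeping these orders straight, while simultaneously tracking the step-size perturbation $t_n \to t$ through the telescoping estimate and carrying every approximation through the quasi-closures, is where the genuine care lies; by contrast the step-size Lipschitz estimate and the equi-continuity extension are routine once the pseudo-resolvent identity and Proposition \ref{proposition:extending_operators_via_LIM} are in hand.
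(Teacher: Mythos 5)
Your proposal is correct, and part (a) plus the final extension step (feeding the assumed equi-continuity of $\{V_n(t)\}$ into Lemma \ref{lemma:joint_equicontinuity_with_convergence} and Proposition \ref{proposition:extending_operators_via_LIM}, then matching the resulting limit operator with the Crandall--Liggett semigroup on the quasi-dense set $\cD(\widehat{H})$) coincides with what the paper does. Where you genuinely diverge is the core convergence $\LIM V_n(t_n)f_n = V(t)f$ on $\cD(\widehat{H})$: the paper does \emph{not} run the exponential formula by hand. Instead it embeds everything into the product Banach space $\fL$ of Proposition \ref{proposition:study_LIM}, encodes $\LIM$ as the closed contractive graph $\fP$, forms the diagonal operator $\cH$ on $\fL\times\fL$ (dissipative with the range condition because each $\widehat{H}_n$ and $\widehat{H}$ are, by Proposition \ref{proposition:Hhat_properties}), and then invokes Theorem 3.2 of \cite{Ku73} as a black box; part (a) is exactly what is needed to verify the hypothesis $\widehat{H} = \cH \cap (\cD(\fP)\times\cD(\fP))$ of that theorem. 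Your four-term decomposition --- quantitative Crandall--Liggett rate $O(\vn{g_n}/\sqrt{m})$ uniform in $n$ for the outer brackets, the telescoped pseudo-resolvent Lipschitz bound $2m\,|1-t/t_n|\,\vn{f_n}$ for the step-size perturbation, and $m$-fold iteration of Theorem \ref{theorem:convergence_of_resolvents}(d) for the middle bracket --- is essentially an inline proof of Kurtz's theorem in this setting, and all the estimates you use check out (the Lipschitz bound follows from the pseudo-resolvent identity and contractivity exactly as you state, and $\sup_n\vn{g_n}<\infty$ is secured as you note). What the paper's route buys is brevity and a clean separation of concerns (the only new verification is Proposition \ref{proposition:study_LIM}); what yours buys is self-containedness, an explicit rate, and a unified treatment of $t_n\to t$, which the paper instead handles afterwards via the splitting $V(t)f - V_n(t)f_n + V_n(t)f_n - V_n(t_n)f_n$ and a second application of Proposition \ref{proposition:extending_operators_via_LIM} to the domain $\cD_{\{t_n\}}(V(t))$. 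The only places where you are slightly more cavalier than you should be are the diagonalization in (c), which over a general directed set $\cQ$ needs a word of justification (though the paper is equally terse there), and the $t_n\downarrow 0$ case, where you should say explicitly that $\vn{V_n(t_n)f_n - f_n}\le t_n\vn{g_n}$ gives the required uniformity; neither affects the validity of the argument.
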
 


	\begin{remark}
		For applications it is of interest to know whether $\cD(\widehat{H})$ is quasi-dense in $C_b(X)$. If the resolvent is obtained as in Theorem \ref{theorem:convergence_of_resolvents} and $\LIM R_n(\lambda) h = h$ as $\lambda \downarrow 0$ for all $n$, then this can sometimes be established directly from an approximation procedure, see for example Lemma 7.19 in \cite{FK06}. This is indeed what one would expect from a Crandall-Liggett theorem for the strict topology.
		Another possibility is to find an explicit expression for the resolvent and verify this property directly. 
		We will pursue a third possibility below, see Proposition \ref{proposition:zero_operator}, that is based on a comparison principle.
	\end{remark}
	
	The main step to go from the result of Theorem \ref{theorem:convergence_of_resolvents} to that of above theorem is an approximation argument by Kurtz: Theorem 3.2 of \cite{Ku73}. The key argument in the approximation result is the embedding of all spaces and semigroups in a common product space. The notion of $\LIM$ is embedded into this product space as a closed subspace. We study these spaces in next proposition.

	\begin{proposition} \label{proposition:study_LIM}
		Let Assumptions \ref{assumption:abstract_spaces} and \ref{assumption:abstract_spaces_q} be satisfied. The space
		\begin{equation*}
		\fL  := \left\{\ip{f}{\{f_n\}} \, \middle| \, f_n \in M_b(X_n), f \in M_b(X), \sup_{n} \vn{f_n} < \infty \right\}, 
		\end{equation*}
		equipped with the norm $\vn{\ip{f}{\{f_n\}}} = \vn{f} \vee \sup_{n} \vn{f_n}$ is a Banach space. Set
		\begin{equation*}
		\fP := \left\{(\ip{f}{\{f_n\}},f) \in \fL\times M_b(X) \, \middle| \, \LIM f_n = f \right\}.
		\end{equation*}
		The set $\fP$ is a closed linear subspace $\fL \times M_b(X)$ and $\fP$ interpreted as an operator from $\fL$ to $M_b(X)$ satisfies $\vn{\fP} \leq 1$. 
	\end{proposition}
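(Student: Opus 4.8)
The plan is to treat $\fL$ as an $\ell^\infty$-type direct sum and $\fP$ as the graph of a coordinate projection, so that all four assertions reduce to standard completeness and limit-interchange arguments adapted to the notion of $\LIM$ from Definition \ref{definition:abstract_LIM}. First I would verify that $\fL$ is a Banach space. Each $M_b(X_n)$ and $M_b(X)$ is complete for the supremum norm, a uniform limit of bounded Baire measurable functions being again bounded and Baire measurable. Given a Cauchy sequence $\ip{f^k}{\{f_n^k\}}$ in $\fL$, the norm $\vn{f} \vee \sup_n \vn{f_n}$ dominates both $\vn{f^k - f^m}$ and, for each fixed $n$, $\vn{f_n^k - f_n^m}$; hence $f^k \to f$ in $M_b(X)$ and $f_n^k \to f_n$ in $M_b(X_n)$ for every $n$. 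The limit sequence is bounded since a Cauchy sequence is norm-bounded, so $\sup_n \vn{f_n} \leq \sup_k \sup_n \vn{f_n^k} < \infty$, placing $\ip{f}{\{f_n\}}$ in $\fL$. Letting $m \to \infty$ in the Cauchy estimate then yields convergence in the $\fL$-norm.

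Linearity of $\fP$ is immediate from linearity of $\LIM$: if $\LIM f_n = f$ and $\LIM g_n = g$, then along any admissible $x_n \in K_n^q$ with $\eta_n(x_n) \to \eta(x)$ the triangle inequality gives $\LIM(af_n + bg_n) = af + bg$, and the uniform bound is clear. Since the first coordinate of the $\fL$-component and the $M_b(X)$-component transform identically, $\fP$ is a linear subspace of $\fL \times M_b(X)$.

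The heart of the argument, and the step I expect to require the most care, is closedness of $\fP$. Suppose $(\ip{f^k}{\{f_n^k\}}, f^k) \in \fP$ converges in $\fL \times M_b(X)$ to $(\ip{f}{\{f_n\}}, g)$. Convergence in $\fL$ forces $f^k \to f$ uniformly and $\sup_n \vn{f_n^k - f_n} \to 0$, while the second-coordinate convergence forces $f^k \to g$; uniqueness of uniform limits gives $g = f$, so the candidate limit already has consistent coordinates. It remains to show $\LIM f_n = f$. The uniform bound is routine, so fix $q$ and $x_n \in K_n^q$ with $\eta_n(x_n) \to \eta(x)$, $x \in K^q$, and split
\begin{equation*}
|f_n(x_n) - f(x)| \leq \vn{f_n - f_n^k} + |f_n^k(x_n) - f^k(x)| + \vn{f^k - f}.
\end{equation*}
The crucial point is that the first term is bounded by $\sup_m \vn{f_m - f_m^k}$, which is small \emph{uniformly in $n$} precisely because convergence in $\fL$ is uniform across the index $n$; this is what lets me fix $k$ large first and then send $n \to \infty$ using $\LIM f_n^k = f^k$ to kill the middle term, avoiding any diagonal extraction. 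Taking $\limsup_n$ and then $k \to \infty$ yields $\lim_n |f_n(x_n) - f(x)| = 0$, so $\LIM f_n = f$ and the limit lies in $\fP$.

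Finally, for the operator norm I would observe that on its domain $\fP$ is single-valued: if $(\ip{f}{\{f_n\}}, g) \in \fP$ then $g$ is forced to equal the first coordinate $f$, so $\fP$ is exactly the restriction of the first-coordinate projection $\ip{f}{\{f_n\}} \mapsto f$ to the subspace where $\LIM f_n = f$. Hence $\vn{\fP \ip{f}{\{f_n\}}} = \vn{f} \leq \vn{f} \vee \sup_n \vn{f_n} = \vn{\ip{f}{\{f_n\}}}$, giving $\vn{\fP} \leq 1$.
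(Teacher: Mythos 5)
Your proof is correct and follows essentially the same route as the paper: the closedness argument uses the identical three-term triangle-inequality split, with the same key observation that the $\fL$-norm controls $\vn{f_n - f_n^k}$ uniformly in $n$, so one fixes $k$ large and then sends $n \to \infty$. The completeness, linearity, and norm-bound steps you spell out are the ones the paper declares immediate, and your treatment of them is sound.
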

	
	In the proposition, we do not consider for which $f$ there are $f_n$ such that $f = \LIM f_n$. We assume this e.g. in Condition \ref{condition:convergence_of_generators_and_conditions_extended_supsuperlim} \eqref{item:approximation_of_functions}. In particular cases, however, surjectivity of $\fP$ can be established directly.
	
	\begin{lemma} \label{lemma:construction_f_n_that_converge_to_f}
		Suppose that $\cX$ is a normal space and that the maps $\eta_n : X_n \rightarrow \cX$ are continuous and that $\eta : X \rightarrow \cX$ is a homeomorphism onto its image. Then for each $f \in M_b(X)$ there are $f_n \in M_b(X_n)$ such that $\LIM f_n = f$. If $f \in C_b(X)$, then $f_n$ can be chosen in $C_b(X_n)$.
		
	\end{lemma}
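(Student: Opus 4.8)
The plan is to transport $f$ to a single function on the ambient space $\cX$ and then pull it back along the maps $\eta_n$. Since $\eta$ is injective I would first define $\tilde{f} := f \circ \eta^{-1}$ on the image $\eta(X) \subseteq \cX$; because $\eta$ is a homeomorphism onto its image, $\tilde{f}$ is continuous (resp. Baire measurable) and bounded with $\vn{\tilde{f}} = \vn{f}$ whenever $f \in C_b(X)$ (resp. $f \in M_b(X)$). If one can produce a bounded extension $\bar{f} : \cX \to \bR$ of $\tilde{f}$, continuous when $f$ is, then one simply sets $f_n := \bar{f} \circ \eta_n$. As each $\eta_n$ is continuous (resp. Baire measurable), $f_n \in C_b(X_n)$ (resp. $M_b(X_n)$), and $\sup_n \vn{f_n} \leq \vn{\bar{f}} = \vn{f} < \infty$, so the first clause of Definition \ref{definition:abstract_LIM} holds automatically.

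Granting such an extension, the convergence $\LIM f_n = f$ is then a one-line computation. Fix $q \in \cQ$ and a sequence $x_n \in K_n^q$ with $\eta_n(x_n) \to \eta(x)$ in $\cX$ for some $x \in K^q$. Then $f_n(x_n) = \bar{f}(\eta_n(x_n))$, and since $\bar{f}$ is continuous at the point $\eta(x)$ with $\bar{f}(\eta(x)) = \tilde{f}(\eta(x)) = f(x)$, we obtain $f_n(x_n) \to f(x)$. As this holds for every admissible $q$ and every such sequence, the second clause of Definition \ref{definition:abstract_LIM} is satisfied. Note that this verification only ever uses the values of $\bar f$ on $\eta(X)$ and its continuity at the limit points $\{\eta(x) : x \in X\}$; the behaviour of $\bar f$ elsewhere is irrelevant to $\LIM$.

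Constructing the extension $\bar{f}$ in the continuous case is where the normality hypothesis on $\cX$ enters, and this is the main obstacle. The natural tool is the Tietze--Urysohn extension theorem, which applies once $\tilde f$ is defined on a closed (hence $C^*$-embedded) subset, but the difficulty is that $\eta(X)$ need not be closed in $\cX$. I would circumvent this through the compact sets supplied by Assumption \ref{assumption:abstract_spaces_q}: each $\eta(K^q)$ is the continuous image of the compact set $K^q$, hence compact and therefore closed in the (Hausdorff) normal space $\cX$, and $\eta$ restricts to a homeomorphism $K^q \to \eta(K^q)$, so $\tilde f|_{\eta(K^q)}$ is continuous and extends by Tietze to a function in $C_b(\cX)$ of the same sup-norm. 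Using the directedness of $\cQ$ together with part (c) of Assumption \ref{assumption:abstract_spaces_q}, which forces $\bigcup_q \eta(K^q) = \eta(X)$ (every singleton is a compact subset of $X$), one must then assemble these compatible compact-set extensions into a single bounded $\bar{f}$ that agrees with $\tilde f$ on $\eta(X)$ and is continuous at each limit point $\eta(x)$. This assembly step, i.e. obtaining one global extension rather than a $q$-dependent family, is the genuinely delicate point and is exactly where the topological hypotheses are consumed.

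For $f \in M_b(X)$ only measurability of $\bar{f}$ is required, so the extension itself is much softer: one extends $\tilde f$ by any bounded Baire-measurable rule off $\eta(X)$ (for instance by $0$), using that $\eta(X)$ is a Baire set since $\eta$ is a Baire-measurable homeomorphism onto its image. The convergence is then verified by the identical limit computation as above, the relevant input again being the behaviour of $\bar f$ at the limit points $\eta(x)$. Thus, in both cases, the substance of the proof is not the composition $f_n = \bar f \circ \eta_n$ but the production of a suitable extension with the correct behaviour along the compact exhaustion $\{\eta(K^q)\}_{q \in \cQ}$.
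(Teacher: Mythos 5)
Your overall strategy in the continuous case is the paper's: transport $f$ to $\tilde f = f\circ\eta^{-1}$ on $\eta(X)$, extend to some $\bar f \in C_b(\cX)$ by Tietze, and set $f_n = \bar f\circ\eta_n$; the limit verification and the norm bound are then exactly as in the paper (which applies Tietze to $\eta(X)$ directly). You are right that Tietze--Urysohn requires a \emph{closed} subset and that $\eta(X)$ need not be closed, but your proposed repair does not close that gap: you extend $\tilde f|_{\eta(K^q)}$ for each $q$ and then defer the ``assembly'' of these into a single $\bar f$ agreeing with $\tilde f$ on all of $\eta(X)$. That assembly is not a technicality --- it is equivalent to the global extension problem you started with, and it can genuinely fail: for $X=(0,1)\subseteq\cX=\bR$ with $\eta$ the inclusion, $f(x)=\sin(1/x)$ lies in $C_b(X)$ but admits no continuous extension to $\bR$, even though its restriction to every compact $K\subseteq(0,1)$ extends. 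So the continuous case of your proof is incomplete at precisely the step that carries the content.

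The measurable case is where your argument actually breaks rather than merely stalls. You extend $\tilde f$ by $0$ off $\eta(X)$ and claim the ``identical limit computation'' gives $\LIM f_n = f$; but that computation used the \emph{continuity} of $\bar f$ at the point $\eta(x)$ to pass from $\eta_n(x_n)\to\eta(x)$ to $\bar f(\eta_n(x_n))\to\bar f(\eta(x))$. A merely Baire measurable $\bar f$ has no such continuity, so $f_n(x_n)=\bar f(\eta_n(x_n))$ need not converge to $f(x)$: already for $X_n=X=\cX=[0,1]$ with all maps the identity, $f=\bONE_{\{0\}}$ and $x_n=1/n$ the required convergence fails. The assertion that $\eta(X)$ is a Baire set because $\eta$ is an injective Baire measurable map is also unjustified in this generality. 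The paper treats $M_b(X)$ by a monotone class argument rather than by pointwise extension; whatever the merits of that route, your extension-by-zero route for measurable $f$ is not salvageable as written.
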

	
	\begin{proof}
		First let $f \in C_b(X)$. The function $g := f \circ \eta^{-1}$ is a continuous function with norm $\vn{f}$ on $\eta(X) \subseteq \cX$.  By the Tietze extension Theorem, it extends to a continuous function $g$ on $\cX$ with norm $\vn{g} = \vn{f}$. We then define $f_n := g \circ \eta_n$, which leads to $\vn{f_n} \leq \vn{f}$. Next, let $x_n \in K_n^q$ and $x \in K^q$ such that $\eta_n(x_n) \rightarrow \eta(x)$. Because $g$ is continuous, we find that $g(\eta_n(x_n)) \rightarrow g(\eta(x))$ implying that $f_n(x_n) \rightarrow f(x)$. Thus $\LIM f_n = f$. The result for $M_b(X)$ then follows from the monotone class theorem, see e.g. Theorem 2.12.9 in \cite{Bo07}.
	\end{proof}

	\begin{remark} \label{remark:monotone_class_theorem}
		
		Note that in the use of the Monotone class theorem, we establish the result for functions that are bounded and measurable with respect to the $\sigma$-algebra generated by all bounded and continuous functions. For a general topological space this implies the final result holds for the set of bounded and measurable functions with respect to the Baire $\sigma$-algebra. In the case that $\cX$ is Polish, the Baire and Borel $\sigma$ algebra's coincide. More general, this holds for perfectly normal spaces, see Proposition 6.3.4 in \cite{Bo07}. 
	\end{remark}
	
	\begin{proof}[Proof of Proposition \ref{proposition:study_LIM}]
		That $\fL$ is a Banach space, as that $\fP$ is linear, is immediate. We establish that $\fP$ is norm closed. Let $f_{n}^{k}, f_n \in M_b(X_n)$ and $f^k, f \in M_b(X)$ such that for all $k$: $\LIM f_n^{k} = f^k$ and $\lim_k \left(\vn{f-f^k} \vee \sup_{n} \vn{f_n^{k} - f_n}\right) = 0$. We prove $\LIM f_n = f$. 
		
		First of all, let $k$ be such that $\sup_{n} \vn{f_n^k - f_n} \leq 1$. Then $\vn{f_n} \leq \vn{f_n - f_n^{k}} + \vn{f_n^{k}} \leq 1 + \vn{f_n^k}$. The final term is bounded as $\LIM f_n^{k} = f^k$. For the second property, fix $q \in \cQ$ and $x_n \in K_n^q$ converging to $x \in K^q$. We have
		\begin{align*}
		\left|f_n(x_n) - f(x)\right| & \leq \left| f_n(x_n) - f_n^{k}(x_n) \right| + \left| f_n^{k}(x_n) - f^k(x)\right| +  \left| f^k(x) - f(x) \right| \\
		& \leq \sup_m \vn{f_m - f_m^{k}}  + \left| f_n^{k}(x_n) - f^k(x)\right| + \vn{f^k - f}.
		\end{align*}
		The first and third term on the right-hand side can be made arbitrarily small by choosing $k$ large. For fixed $k$ the term final term converges to $0$ as $\LIM_n f_n^k = f^k$. Thus, we find $f_n(x_n) \rightarrow f(x)$. Contractivity of $\fP$ follows by assumption.
	\end{proof}

		The proof of the theorem is based on a general semigroup approximation result \cite[Theorem 3.2]{Ku73}.
		
		\begin{proof}[Proof of Theorem \ref{theorem:CL_extend}]
			
			For the proof of \eqref{item:semigroup_density_domains}, pick $f \in \cD(\widehat{H})$. By definition there are $\lambda > 0$ and $h \in D$ such that $f = R(\lambda) h$. By the assumption in Theorem \ref{theorem:convergence_of_resolvents}, there are $h_n \in B_n \cap M_b(X_n)$ such that $\LIM h_n = h$. By Theorem \ref{theorem:convergence_of_resolvents}, we obtain $\LIM R_n(\lambda) h_n = R(\lambda) h = f$. By construction $R_n(\lambda) h_n \in \cD(\widehat{H}_n)$ establishing \eqref{item:semigroup_density_domains}.
			
			\smallskip
			
			We proceed with the proof of \eqref{item:semigroup_extensionV}, \eqref{item:semigroup_approx_of_domain} and \eqref{item:semigroup_convergence_semigroups} for which we will use Theorem 3.2 of \cite{Ku73}. Recall the set $\fL$ and the closed subset $\fP$ of Proposition \ref{proposition:study_LIM}. Denote also
			\begin{equation*}
			\cH := \left\{(\ip{f}{\{f_n\}},\ip{g}{\{g_n\}}) \in \fL \times \fL \, \middle| \, (f_n,g_n) \in \widehat{H}_n, (f,g) \in \widehat{H}\right\}.
			\end{equation*}
			Note that $\cH$ is dissipative and satisfies the range condition because the operators $\widehat{H}_n$ and $\widehat{H}$ do as well. The semigroup $\cV(t)$ generated by $\cH$ equals $\cV(t)\left(\ip{f}{\{f_n\}}\right) = \ip{V(t)f}{\{V_n(t)f_n\}}$ on the uniform closure in $\fL$ of $\cD(\widehat{H}) \times \prod_n \cD(\widehat{H}_n)$ (which might be smaller than the product over the uniform closures).
			
			By \eqref{item:semigroup_density_domains}, we have 
			\begin{equation*}
			\widehat{H} = \left\{(f,g) \, \middle| \, ((\ip{f}{\{f_n\}},f), (\ip{g}{\{g_n\}},g)) \in \cH \cap (\cD(\fP) \times \cD(\fP))  \right\},
			\end{equation*}
			so that all conditions for Theorem 3.2 of \cite{Ku73} are satisfied. From Equation (3.4) in \cite{Ku73}, we infer that if $t \geq 0$, $f_n \in \cD(\widehat{H}_n)$ and $f \in \cD(\widehat{H})$ such that $\LIM f_n = f$, then $\LIM V_n(t) f_n = V(t)f$.
			
			Fix $t > 0$ and define
			\begin{multline} \label{eqn:extension1}
			\cD(V(t)) := \\
			\left\{h \in C_b(X) \, \middle| \, \exists \, h_n \in B_n: h = \LIM h_n, \LIM V_n(t) h_n \text{ exists and is continuous}  \right\}.
			\end{multline}
			By the argument above $\cD(V(t))$ contains $\cD(\widehat{H})$. By Proposition \ref{proposition:extending_operators_via_LIM} the set $\cD(V(t))$ is quasi-closed and the operator $V(t)$ extends to $\cD(V(t))$ on which it is strictly continuous on bounded sets.
			
			Thus, the quasi-closure $\cD$ of $\cD(\widehat{H})$ is contained in $\cD(V(t))$ for all $t$. Thus, \eqref{item:semigroup_extensionV} , \eqref{item:semigroup_approx_of_domain} and \eqref{item:semigroup_convergence_semigroups} (for $t_n = t$) all follow from Proposition \ref{proposition:extending_operators_via_LIM}.
			
			We now extend \eqref{item:semigroup_convergence_semigroups} to the context of $t_n$ converging to $t$. Thus, let $t_n \rightarrow t$, $f_n \in \cD(\widehat{H}_n)$ and $f \in \cD(\widehat{H})$ such that $\LIM f_n = f$. We have seen above that $\LIM V_n(t)f_n = V(t)f$. Using the decomposition 
			\begin{equation*}
			V(t)f - V_n(t_n)f_n = \left[ V(t)f - V_n(t)f_n \right] + \left[V_n(t)f_n - V_n(t_n)f_n \right]
			\end{equation*}
			and the uniform continuity of $\cV(t)$ on $\cD(\widehat{H}) \times \prod_n \cD(\widehat{H}_n)$ we find that also $\LIM V_n(t_n)f_n = V(t)f$.
			
			Repeating the argument above for
			\begin{multline} \label{eqn:extension2}
			\cD_{\{t_n\}}(V(t)) :=\\
			 \left\{h \in C_b(X) \, \middle| \, \exists \, h_n \in B_n: h = \LIM h_n, \LIM V_n(t_n) h_n \text{ exists and is continuous}  \right\},
			\end{multline}
			we find by Proposition \ref{proposition:extending_operators_via_LIM} a second extension of $V(t)$ with the correct properties. However, as we have seen the extensions based on \eqref{eqn:extension1} and \eqref{eqn:extension2} agree on the quasi-dense subset $\cD(\widehat{H}) \subseteq \cD$ and therefore must be the same on $\cD$. This establishes \eqref{item:semigroup_convergence_semigroups}.
		\end{proof}

\section{Density of the domain} \label{section:density_of_domain}

	In Theorem \ref{theorem:CL_extend}, we obtained a semigroup that was defined on the quasi-closure of $\cD(\widehat{H})$. In applications, often it is of interest to know whether this quasi-closure is in fact equal to $C_b(X)$. 
	
	A key method to verify this quasi-density, is the verification that as $\lambda \downarrow 0$, we have $\LIM R(\lambda) h = h$ for the buc topology. For this there are two possible strategies:
	\begin{enumerate}
		\item One finds a explicit characterization of $R$, i.e. a control representation, and verifies this property directly,
		\item In the context of Theorem \ref{theorem:convergence_of_resolvents}, one knows that  $\lambda \downarrow 0$, we have $\LIM R_n(\lambda) h = h$ for each $n$ and $h$ and establishes that such statements can be lifted to the limit, see e.g. Lemma 7.19 in \cite{FK06}.
	\end{enumerate}

We will introduce a new method, that bootstraps the procedure of Section \ref{section:convergence_of_operators_and_inverses}.

We proceed with an informal discussion. Consider the setting in which $R(\lambda)h$ is the viscosity solution to $f - \lambda H f = h$. In the linear theory, it is generally known that $R(\lambda)h \rightarrow h$ as $\lambda \downarrow 0$ in an appropriate topology. Indeed, as $R(\lambda)h \in \cD(H)$, this establishes density of $\cD(H)$. We expect the same result to hold true in the non-linear case. Consider the operators $A_n = \frac{1}{n} H$ and resolvents $\cR_n(\lambda) = R\left(\tfrac{\lambda}{n}\right)$. Formally, the operator $A_n$ converges to the zero-operator $0 \cdot H$, so that we expect that the relaxed $\limsup$ and $\liminf$ of $\cR_n(1)h$ yield a viscosity sub- and supersolution to $f - 0 \cdot H f = h$, or informally written, to $f = h$. Clearly, we expect these limits to equal $h$. To obtain this result rigorously, we need a comparison principle.

Informally, we need that $\cD(H)$ that is `large enough to uniquely identify functions'. 

We make this intuition rigorous.

\begin{proposition} \label{proposition:zero_operator}
	Let $X$ be a space with metrizable compact sets and let $H_\dagger \subseteq C_b(X) \times C_b(X)$ and $H_\ddagger \subseteq C_b(X) \times C_b(X)$. Let $R(\lambda) : C_b(X) \rightarrow C_b(X)$ be a collection of operators that is locally strictly equi-continuous on bounded sets as in Definition \ref{definition:local_strict_equicont_of_pseudoresolvent}. 
	
	Let $D$ be a quasi-dense set in $C_b(X)$ and suppose that for $\lambda > 0$ and $h \in D$, the function $R(\lambda)h$ is a viscosity solution to
	\begin{equation*}
	f - \lambda H_\dagger f = h, \qquad f - \lambda H_\ddagger f = h.
	\end{equation*}
		
	Denote $A_{\dagger} := 0 \cdot \cH_\dagger$ and $A_{\ddagger} := 0 \cdot \cH_\ddagger$. Suppose that the comparison principle holds for $f - A_\dagger f = h_1$ and $f - A_\ddagger f = h_2$ for $h_1,h_2 \in D$. Let $\lambda_n \downarrow 0$. Then for all $h \in D$ we have $\LIM_n R(\lambda_n)h = h$. In particular, the domain $\cD(\widehat{H})$ is quasi-dense in $C_b(X)$.
\end{proposition}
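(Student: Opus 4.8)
The plan is to realize $\LIM_n R(\lambda_n)h = h$ as an instance of the convergence machinery of Section~\ref{section:convergence_of_operators_and_inverses}, applied to a sequence of Hamiltonians that degenerates to the zero operator. First I would specialize Assumptions~\ref{assumption:abstract_spaces2}--\ref{assumption:abstract_spaces_q2} to the trivial situation $X_n = X = Y = \cX = \cY$ with all maps the identity and $\cQ$ the compact subsets of $X$, so that $\LIM$ is buc-convergence (Example~\ref{example:LIM_buc_convergence}). For the fixed sequence $\lambda_n \downarrow 0$ I set $R_n(\mu) := R(\lambda_n\mu)$, $H_{n,\dagger} := \lambda_n H_\dagger$ and $H_{n,\ddagger} := \lambda_n H_\ddagger$. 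Then for $h \in D$ and every $\mu > 0$ the function $R_n(\mu)h = R(\lambda_n\mu)h$ is a viscosity solution of $f - \mu H_{n,\dagger}f = h$ and $f - \mu H_{n,\ddagger}f = h$, and one checks directly from Definition~\ref{definition:extended_sub_super_limit} that $A_\dagger = 0\cdot H_\dagger \subseteq ex-\subLIM_n H_{n,\dagger}$ and $A_\ddagger = 0\cdot H_\ddagger \subseteq ex-\superLIM_n H_{n,\ddagger}$: given $(f_0,g_0)\in H_\dagger$ one takes $(f_0,\lambda_n g_0)\in H_{n,\dagger}$, and since $g_0$ is bounded and $\lambda_n\to 0$ the conditions \eqref{eqn:convergence_condition_sublim_constants}--\eqref{eqn:sublim_generators_upperbound} hold with limiting second coordinate $0 = 0\cdot g_0$.

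The core step is to establish the two relaxed-limit statements: writing $u := \LIMSUP_n R(\lambda_n)h$ and $v := \LIMINF_n R(\lambda_n)h$ (both well-defined and bounded, using the local strict equicontinuity of Definition~\ref{definition:local_strict_equicont_of_pseudoresolvent} with the comparison function $0$), the function $u$ is a viscosity subsolution of $f - A_\dagger f = h$ and $v$ a viscosity supersolution of $f - A_\ddagger f = h$, for each $h\in D$. This is precisely Proposition~\ref{proposition:existence_viscosity_sub_super_solutions} evaluated at $\mu = 1$ with $h_n = h$, and I would run its proof: the pseudo-resolvent identity rewrites $R(\lambda_n)h = R(\varepsilon\lambda_n)(\,\cdot\,)$, the viscosity bound $R(\varepsilon\lambda_n)(f_0-\varepsilon\lambda_n g_0)\le f_0$ comes from Lemma~\ref{lemma:identification:viscosity_subsol}, the equicontinuity localizes the difference, and then \eqref{eqn:sublim_generators_upperbound} (with limit $0$) kills the $\lambda_n g_0$-term; Lemma~\ref{lemma:technical_to_establish_visc_sol} finally converts the resulting supremum inequality into the optimizing sequence demanded by the definition of a subsolution.

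The finishing step is a comparison sandwich. Because $A_\dagger = 0\cdot H_\dagger$ and $A_\ddagger = 0\cdot H_\ddagger$ have vanishing second coordinate, $h$ itself solves the pair $f - A_\dagger f = h$, $f - A_\ddagger f = h$: along any maximizing sequence for $h - f_0$ the outcome inequality reads $\limsup_n (h - 0 - h) \le 0$, and dually for the supersolution property. Applying the assumed comparison principle for $A_\dagger,A_\ddagger$ twice---to the subsolution $u$ against the supersolution $h$, and to the subsolution $h$ against the supersolution $v$---yields $u \le h \le v$, while $v \le u$ holds automatically for the relaxed limits of a single sequence. Hence $\LIMSUP_n R(\lambda_n)h \le h \le \LIMINF_n R(\lambda_n)h$, so $\LIM_n R(\lambda_n)h = h$. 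Since $R(\lambda_n)h \in \cD(\widehat H)$ with $\sup_n \vn{R(\lambda_n)h} < \infty$, each $h\in D$ lies in the quasi-closure of $\cD(\widehat H)$; as $D$ is quasi-dense in $C_b(X)$, so is $\cD(\widehat H)$.

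I expect the main obstacle to be the relaxed-limit step, that is, importing the proof of Proposition~\ref{proposition:existence_viscosity_sub_super_solutions}. That argument uses the local strict equicontinuity to pin the global supremum $\sup_x\{R(\lambda_n)h - f_0\}$ onto a \emph{fixed} compact set and thereby tame the ``escaping mass'' arising from the non-coercivity of the test functions $f_0 \in \cD(H_\dagger)$; the localization is carried out on a difference of two pseudo-resolvent images, so it genuinely requires the $R_n(\mu) = R(\lambda_n\mu)$ to be contractive pseudo-resolvents together with the viscosity bound $R(\delta)(f_0-\delta g_0)\le f_0$. One must therefore know that $R$ is a contractive pseudo-resolvent and that its solution property is available on the functions $f_0 - \delta g_0$ feeding the estimate; in the bootstrap setting of Theorem~\ref{theorem:convergence_of_resolvents} this is the case, and if necessary the solution property is first extended from $D$ to all of $C_b(X)$ via Proposition~\ref{proposition:extending_set_of_viscosity_solutions}.
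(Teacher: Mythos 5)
Your proposal is correct and follows essentially the same route as the paper: apply Proposition \ref{proposition:existence_viscosity_sub_super_solutions} to the rescaled operators $\lambda_n H_\dagger$, $\lambda_n H_\ddagger$ with resolvents built from $R(\lambda_n\,\cdot)$, verify the extended sub-/super-limit conditions by taking $(f_0,\lambda_n g_0)$, observe that $h$ itself solves the degenerate pair $f - A_\dagger f = h$, $f - A_\ddagger f = h$ (the paper isolates this as Lemma \ref{lemma:trivial_viscosity_solutions}), and sandwich via the comparison principle. Your explicit choice $R_n(\mu) := R(\lambda_n\mu)$ is in fact a slightly cleaner way of stating what the paper intends by $\cR_n := R(\lambda_n)$.
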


\begin{remark}
	Generally, proofs that establish the comparison principle for $f - \lambda H_\dagger f = h$ and $f - \lambda H_\ddagger f = h$ can be adapted in a straightforward way to also establish the comparison principle for $f - \lambda A_\dagger f = h$ and $f - \lambda A_\ddagger f = h$.
\end{remark}

We start by proving the seemingly trivial fact that $h$ solves the equation $f = h$.

\begin{lemma} \label{lemma:trivial_viscosity_solutions}
	Let $H_\dagger \subseteq LSC_l(X) \times USC_u(X)$ and $H_\ddagger \subseteq USC_u(X) \times LSC_l(X)$ and define $A_{\dagger} := 0 \cdot \cH_\dagger$ and $A_{\ddagger} := 0 \cdot \cH_\ddagger$.
	
	For any $h \in C_b(X)$ the function $h$ is a subsolution to $f -  A_{\dagger} f = h$ and a supersolution to $f - A_{\ddagger}f = h$.
	
\end{lemma}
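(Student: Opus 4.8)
The plan is to read off both assertions directly from the definitions of viscosity sub- and supersolution, using the single structural observation that $0\cdot g$ vanishes at every point where $g$ is finite. I will write out only the subsolution statement for $A_\dagger$; the supersolution statement for $A_\ddagger$ follows by the symmetric argument (exchanging suprema for infima, and $-\infty$ for $+\infty$).

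First I would record the trivial preliminary facts. Since $h\in C_b(X)$ we have $h\in USC_u(X)$, so the upper-semicontinuity requirement in the definition of a subsolution holds. Next I fix a test pair of $A_\dagger$: by the definition of $0\cdot H_\dagger$ it has the form $(f,\hat g)$ with $(f,g)\in H_\dagger$ and $\hat g=0\cdot g$, where $f\in LSC_l(X)$ and $g\in USC_u(X)$. We are allowed to assume $\sup_x\{h(x)-f(x)\}<\infty$, since that is precisely the hypothesis under which the subsolution inequality must be verified.

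The core step is then to produce the optimizing sequence of \eqref{eqn:subsol_optimizing_sequence}. Because $\sup_x\{h(x)-f(x)\}$ is finite, there is a sequence $y_n\in X$ with $\lim_n\{h(y_n)-f(y_n)\}=\sup_x\{h(x)-f(x)\}$. Along this sequence $g(y_n)$ is a real number, so by the definition of the scalar action we obtain $\hat g(y_n)=0\cdot g(y_n)=0$; consequently
\begin{equation*}
\limsup_{n\to\infty}\{h(y_n)-\hat g(y_n)-h(y_n)\}=\limsup_{n\to\infty}\{-\hat g(y_n)\}=0\le 0,
\end{equation*}
which is exactly \eqref{eqn:subsol_sequence_outcome}. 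Hence $h$ is a subsolution to $f-A_\dagger f=h$. The supersolution case is identical: one takes a minimizing sequence for $h-f$ with $(f,g)\in H_\ddagger$ to get \eqref{eqn:supersol_optimizing_sequence}, uses $0\cdot g=0$ on the set where $g$ is finite, and thereby obtains $\liminf_n\{-\hat g(y_n)\}=0\ge 0$, matching \eqref{eqn:supersol_sequence_outcome}.

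The one point that needs care — and the only place the semicontinuity of $g$ plays any role — is the tacit claim that $\hat g(y_n)=0$ along the optimizing sequence, i.e.\ that $g(y_n)\neq-\infty$ (respectively $\neq+\infty$). In the regime in which this lemma is actually applied, namely Proposition \ref{proposition:zero_operator} with $H_\dagger,H_\ddagger\subseteq C_b(X)\times C_b(X)$, every range function $g$ is real-valued, so $\hat g\equiv 0$ and the issue evaporates entirely: any maximizing (resp.\ minimizing) sequence for $h-f$ works verbatim. I therefore expect the whole of the proof to amount to bookkeeping about the value $0\cdot(\pm\infty)$ on the exceptional level sets of $g$, and for the $C_b(X)\times C_b(X)$ application no such bookkeeping is required, which is why the statement is genuinely as elementary as its phrasing suggests.
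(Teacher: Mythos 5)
Your argument is, in substance, the paper's own proof with the scaffolding removed: the paper routes the construction of the maximizing sequence through Lemma \ref{lemma:technical_to_establish_visc_sol}\,(a) applied to $F:=h-f$ and $G:=-(0\cdot\hat g)$, whose conclusion is exactly your sequence $y_n$ together with $\limsup_n\bigl(-0\cdot\hat g(y_n)\bigr)\le 0$; in the $C_b(X)\times C_b(X)$ setting of Proposition \ref{proposition:zero_operator}, where $0\cdot\hat g\equiv 0$, both versions are complete and equivalent.

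The caveat you isolate is, however, a genuine gap in the stated generality, and the paper does not close it either. The first display of the paper's proof asserts
\begin{equation*}
\sup_x \left\{h(x) - f(x)\right\} \leq \sup_x \left\{h(x) - f(x) - \varepsilon\bigl(-0\cdot\hat g(x)\bigr)\right\}
\end{equation*}
on the grounds that $0\cdot\hat g\le 0$; but then $-0\cdot\hat g\ge 0$, so the right-hand side is in fact $\le$ the left-hand side, with equality precisely when $\sup_x\{h(x)-f(x)\}$ is approached on the set $\{\hat g>-\infty\}$ --- which is exactly the tacit claim you flag. It can fail: on $X=[0,1]$ take $h=0$, $f=1-\bONE_{\{0\}}\in LSC_l(X)$, $\hat g(x)=\log x$ with $\hat g(0)=-\infty$, so $\hat g\in USC_u(X)$, and $H_\dagger=\{(f,\hat g)\}$. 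Then $\sup_x\{h(x)-f(x)\}=0$ is approached only along sequences that are eventually equal to $0$, where $-0\cdot\hat g=+\infty$, so \eqref{eqn:subsol_sequence_outcome} fails and $h$ is not a subsolution of $f-A_\dagger f=h$. (In this example $0\cdot\hat g$ is not even upper semicontinuous, a further indication that the lemma is only intended for real-valued images.) So the statement requires either your restriction to $H_\dagger,H_\ddagger\subseteq C_b(X)\times C_b(X)$ --- which is all that Proposition \ref{proposition:zero_operator} uses --- or the added hypothesis that $\sup_x\{h(x)-f(x)\}$ is approached where $\hat g$ is finite. Your explicit acknowledgement of this point is the honest version of the argument rather than a defect of it; just state the real-valuedness assumption as part of the lemma rather than deferring it to the application.
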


\begin{proof}
	We show that $h$ is a viscosity subsolution to  $f -  A_{\dagger} f = h$ and a supersolution to $f - A_{\ddagger}f = h$. 
	
	We establish that $h$ is a viscosity subsolution of $f - A_\dagger f = h$ by using Lemma \ref{lemma:technical_to_establish_visc_sol} (a). Let $(f,g) \in A_\dagger$. Thus $g = 0 \cdot \hat{g}$ with $(f,\hat{g}) \in H_\dagger$. Note that $\hat{g}$ is bounded from above. Thus $0 \cdot \hat{g} \leq 0$. Thus, for all $\varepsilon > 0$ we have
	\begin{equation*}
	\sup_x h(x) - f(x) \leq \sup_x h(x) - f(x) - \varepsilon\left((h(x) - h(x)\right) -0 \cdot \hat{g}(x)).
	\end{equation*}
	This inequality, in combination with the fact that $f$ is bounded from below, implies that the condition of Lemma \ref{lemma:technical_to_establish_visc_sol} (a) is satisfied (note that the $g$'s in the Lemma and here are different). As a consequence, we find $x_n \in X$ such that
	\begin{gather*}
	\lim_n h(x_n) - f(x_n) = \sup_x h(x) - f(x) \\
	\limsup_n h(x_n) - f(x_n) - 0 \cdot \hat{g}(x_n) \leq 0,
	\end{gather*}
	that is, $h$ is a subsolution to $f - A_\dagger f = h$. Similarly, we prove that $h$ is a supersolution to $f - A_\ddagger f = h$, which concludes the proof.
\end{proof}

In the proof below, the notion of $\LIM$ refers to buc convergence. Thus, $\cQ$ is the set of compact sets $\cK$ in $X$ with $K_n^\cK = K^\cK = \cK$. See example \ref{example:LIM_buc_convergence}. Note that we need metrizable compacts to extract converging subsequences from sequences in compact sets.

\begin{proof}[Proof of Proposition \ref{proposition:zero_operator}]
	If we can establish that $\LIM R(\lambda_n) h = h$, then we have that $\cD(\widehat{H})$ is quasi-dense in $D$. As $D$ is quasi-dense in $C_b(X)$ by assumption, this establishes the claim.
	
	\smallskip
	
	By Lemma \ref{lemma:trivial_viscosity_solutions} and uniqueness of viscosity solutions, it suffices to apply Proposition \ref{proposition:existence_viscosity_sub_super_solutions} for $A_{n,\dagger} = \lambda_n H_{\dagger}$, $A_{n,\ddagger} := \lambda_n H_{\ddagger}$ and $\cR_n := R\left(\lambda_n\right)$.
	
	Doing so, we obtain that $\cR_n h$ is a viscosity subsolution to $f - A_{n,\dagger}f = h$ and a supersolution to $f -  A_{n,\ddagger} f = h$. Thus, it suffices to verify Condition \ref{condition:convergence_of_generators_and_conditions_extended_supsuperlim}.

	\smallskip
	
	We work with $B_n = C_b(X)$. Then \eqref{item:approximation_of_functions} is immediate. Condition \ref{item:convH_pseudoresolvents_solve_HJ} follows by assumption. Also \eqref{item:convH_strict_equicont_resolvents} is immediate by local strict equi-continuity on bounded sets of the resolvent $R(\lambda)$.

	\smallskip
	Next, we establish \eqref{item:convH_sub_superLIM}, i.e.:
	\begin{align*}
	A_\dagger \subseteq ex-\LIMSUP A_{n,\dagger}, \qquad A_{\ddagger} \subseteq ex-\LIMINF A_{n,\ddagger}.
	\end{align*}
	We only prove the first claim. Suppose $(f,g) \in A_{\dagger}$. Then there is a $\hat{g}$ such that $g = 0 \cdot \hat{g}$ and $(f,\hat{g}) \in H_\dagger$. Set $f_n = f$ and $g_n = \lambda_n \cdot \hat{g}$. It follows that $(f_n,g_n) \in A_{n,\dagger}$. It is immediate that $\LIM f_n \wedge c = f$ for all $c$ and as $\hat{g}$ is bounded above also $\sup_n \sup_x g_n(x) \leq 0 \vee \sup_x g(x) < \infty$. Finally, we establish \eqref{eqn:sublim_generators_upperbound}. Note that as $x_n \rightarrow x$ in some compact set $\cK \subseteq X$, we have
	\begin{equation*}
	\limsup_n \widehat{g}(x_n) \leq \widehat{g}(x)
	\end{equation*}
	as $\widehat{g}$ is upper semi-continuous. It follows that
	\begin{equation*}
	\limsup_n g_n(x_n) = \limsup_n \lambda_n \widehat{g}(x_n) \leq 0 \cdot g(x).
	\end{equation*}
	Thus, we conclude by Proposition \ref{condition:convergence_of_generators_and_conditions_extended_supsuperlim} and the comparison principle that $\LIM \cR_n h$ as $n \rightarrow \infty$ is the unique viscosity solution $h$ to $f -  A_{\dagger} f = h$ and $f - A_{\ddagger}f = h$.
\end{proof}

\appendix

\section{Proof of Proposition \ref{proposition:buc_cont_equiv_to_double_bound}} \label{appendix:proof_of_topology_implications}

\begin{proof}[Proof of Proposition \ref{proposition:buc_cont_equiv_to_double_bound}]
	We prove (a) to (b). Fix a compact set $K \subseteq X$ and $r, \delta > 0$.
	
	Because the semi-norm $p(f) = \sup_{x \in K} |f(x)|$ is continuous for the strict topology. and $T$ is strictly continuous, there is a semi-norm $q(f) = \sup_n a_n \sup_{x \in K_n} |f(x)|$ such that $p(Tf - Tg) \leq q(f-g)$. 
	
	Set $C_0(r) = 2r$, $C_1(\delta,r) = a_1$, and 
	\begin{equation*}
	\hat{K}(K,\delta,r) := \bigcup_{i : a_i > \delta} K_i.
	\end{equation*}
	As $a_i \downarrow 0$, $\hat{K}$ is indeed a compact set. Let $n_0$ such that for $n \geq n_0$ we have $a_n  \leq \delta$. Then, if $\vn{f} \vee \vn{g} \leq r$:
	\begin{align*}
	p(Tf - Tg) \leq q(f-g)  & \leq \sup_{n < n_0} a_n \sup_{x \in K_n} |f(x) - g(x)| + \sup_{n \geq n_0} a_n \sup_{x \in K_n} |f(x) - g(x)| \\
	& \leq a_1 \sup_{x \in \hat{K}} |f(x) - g(x)|  + \delta \vn{f-g} \\
	& \leq C_1(\delta,r) \sup_{x \in \hat{K}} |f(x) - g(x)| + \delta C_0(r).
	\end{align*}
	establishing (b).
	
	We prove (b) to (c). Let $f_\alpha$ be a bounded net that converges to $f$. To prove that $Tf_\alpha$ converges strictly to $f$, we need to establish that $Tf_\alpha$ is bounded and that for each compact set $K \subseteq X$ and $\varepsilon > 0$ there is a $\alpha_0$ such that for $\alpha \geq \alpha_0$, we have
	\begin{equation*}
	\sup_{x \in K} \left|Tf_\alpha(x) - Tf(x)\right| \leq \varepsilon.
	\end{equation*}	
	First of all, as the net is bounded there is some $r$ such that $\sup_n \vn{f_n} \leq r$ for some $r$. By (b), using $\delta = 1$, we find that $\vn{Tf_\alpha - Tf} \leq C_0(r) + 2r C_1(1,r)$. Next, fix a compact set $K \subseteq X$ and $\varepsilon > 0$. By (b), with $\delta = \tfrac{1}{2}\varepsilon C_0(r)^{-1}$, we find a compact set $\widehat{K}$ such that
	\begin{equation*}
	\sup_{x \in K} \left|T f_\alpha(x) -Tf(x)\right| \leq \frac{1}{2} \varepsilon + C_1(\delta,r) \sup_{x \in \widehat{K}} \left|f_\alpha(x) - f(x)\right|
	\end{equation*}
	Thus, there is some $\alpha_0$ such that for $\alpha \geq \alpha_0$ the left hand side is bounded by $\varepsilon$. We conclude that $T$ is strictly continuous on bounded sets.
\end{proof}

\bibliographystyle{abbrv}
\bibliography{../KraaijBib}{}
\end{document}